\DeclareSymbolFont{cyrletters}{OT2}{wncyr}{m}{n}
\DeclareMathSymbol{\Sha}{\mathalpha}{cyrletters}{"58}
\theoremstyle{definition}
\newtheorem*{prop*}{Proposition}
\theoremstyle{definition}
\newtheorem*{thm*}{Theorem}
\theoremstyle{definition}
\newtheorem*{cor*}{Corollary}
\theoremstyle{definition} 
\newtheorem{ssprop}[subsubsection]{Proposition}
\theoremstyle{definition} 
\newtheorem{sprop}[subsection]{Proposition}
\newcommand{\sProposition}[2]{\begin{sprop} \label{#1} 
{#2} \end{sprop}}
\theoremstyle{definition} 
\newtheorem{sconj}[subsection]{Conjecture}
\theoremstyle{definition} 
\newtheorem{sthm}[subsection]{Theorem}
\theoremstyle{definition} 
\newtheorem{ssthm}[subsubsection]{Theorem}
\theoremstyle{definition} 
\newtheorem{sslm}[subsubsection]{Lemma}
\theoremstyle{definition} 
\newtheorem{slm}[subsection]{Lemma}
\theoremstyle{definition} 
\newtheorem{scor}[subsection]{Corollary}
\theoremstyle{definition} 
\newtheorem{sremark}[subsection]{Remark}
\theoremstyle{definition} 
\newtheorem{ssconj}[subsubsection]{Conjecture}
\theoremstyle{definition}
\newtheorem*{conj*}{Conjecture}
\theoremstyle{definition} 
\newtheorem{sscond}[subsubsection]{Condition}
\theoremstyle{definition} 
\newtheorem{suntitled}[subsection]{}
\newcommand{\sUntitled}[2]{\begin{suntitled} \label{#1} 
{#2} \end{suntitled}}
\let\oldmarginpar\marginpar
\renewcommand\marginpar[1]{\-\oldmarginpar[\raggedleft\footnotesize #1]%
{\raggedright\footnotesize #1}}
\DeclareFontFamily{U}{russian}{}
\DeclareFontShape{U}{russian}{m}{n}
        { <5><6> wncyr5
        <7><8><9> wncyr7
        <10><10.95><12><14.4><17.28><20.74><24.88> wncyr10 }{}
\DeclareSymbolFont{Russian}{U}{russian}{m}{n}
\DeclareSymbolFontAlphabet{\mathcyr}{Russian}
\let\@math@cyr\mathcyr
\renewcommand{\mathcyr}[1]{\@math@cyr{\cyracc #1}}
\newcommand{\sh}{\mathcyr{sh}} 
\newcommand{\set}[2]{\big\{ #1 \; \big| \; #2 \big\} }
\newcommand{\Vect}{\operatorname{\bf{Vect}}}
\newcommand{\from}{\leftarrow}
\newcommand{\xto}{\xrightarrow}
\newcommand{\xfrom}{\xleftarrow}
\newcommand{\surj}{\twoheadrightarrow}
\newcommand{\gr}{\operatorname{gr}}
\newcommand{\Spec}{\operatorname{Spec}}
\newcommand{\Li}{\operatorname{Li}}
\newcommand{\Hom}{\operatorname{Hom}}
\newcommand{\Ext}{\operatorname{Ext}}
\newcommand{\Aut}{\mathrm{Aut}\,}
\newcommand{\ad}{\operatorname{ad}}
\newcommand{\Lie}{\operatorname{Lie}}
\newcommand{\m}[1]{\mathrm{#1}}
\newcommand{\fk}[1]{\mathfrak{#1}}
\newcommand{\bb}[1]{\mathbb{#1}}
\newcommand{\la}{\lambda}
\newcommand{\ka}{\kappa}
\newcommand{\si}{\sigma}
\newcommand{\Si}{\Sigma}
\newcommand{\ze}{\zeta}
\newcommand{\ga}{\gamma}
\newcommand{\al}{\alpha}
\newcommand{\be}{\beta}
\newcommand{\om}{\omega}
\newcommand{\Om}{\Omega}
\newcommand{\ep}{\epsilon}
\newcommand{\de}{\delta}
\newcommand{\up}{\upsilon}
\newcommand{\Ga}{{\mathbb{G}_a}}
\newcommand{\Gm}{{\mathbb{G}_m}}
\newcommand{\Qp}{{\QQ_p}}
\newcommand{\Zp}{{\ZZ_p}}
\newcommand{\ZZ}{\bb{Z}}
\newcommand{\GG}{\mathbb{G}}
\newcommand{\nN}{\fk{n}}
\newcommand{\NN}{\bb{N}}
\newcommand{\QQ}{\bb{Q}}
\newcommand{\PP}{\bb{P}}
\newcommand{\VV}{\bb{V}}
\newcommand{\Uu}{\mathcal{U}}
\newcommand{\uU}{\mathfrak{u}}
\newcommand{\Bb}{\mathcal{B}}
\newcommand{\FF}{\bb{F}}
\renewcommand{\AA}{\bb{A}}
\newcommand{\Oo}{\mathcal{O}}
\newcommand{\Ss}{\mathcal{S}}
\newcommand{\Aa}{\mathcal{A}}
\newcommand{\Ff}{\mathcal{F}}
\newcommand{\Rr}{\mathcal{R}}
\newcommand{\Gg}{\mathcal{G}}
\newcommand{\Ee}{\mathcal{E}}
\newcommand{\Pp}{\mathcal{P}}
\newcommand{\Tt}{\mathcal{T}}
\newcommand{\aA}{\fk{a}}
\newcommand{\inv}{^{-1}}
\newcommand{\areq}{\ar@{=}}
\newcommand{\suphook}{\ar@{^(->}}
\newcommand{\subhook}{\ar@{_(->}}
\newcommand{\smses}[6]
{
\[
\xymatrix{
1 \ar[r] &
#1 \ar[r]_-{#2} &
#3 \ar[r]_-{#4} &
#5 \ar[r] \ar@/_1.5pc/[l]_-{#6} &
1
}
\]
}
\newcommand{\inj}{\hookrightarrow}
\newcommand{\thrpl}{\PP^1 \setminus \{0,1,\infty\}}
\newcommand{\dR}{{\rm {dR}}}
\newcommand{\et}{{\textrm {\'et}}}
\newcommand{\sha}{\begin{tiny}{\Sha}\end{tiny}}
\newcommand{\PL}{\m{PL}}
\newcommand{\ev}{\fk{ev}}
\newcommand{\MT}{\operatorname{\mathbf{MT}}}
\newcommand{\logu}{\log^\uU}
\newcommand{\Liu}{\Li^\uU}
\newcommand{\zeu}{\ze^\uU}
\newcommand{\un}{\m{un}}
\newcommand{\wt}{\operatorname{wt}}
\newcommand{\Xo}{e_1}
\newcommand{\Xoo}{e_{11}}
\newcommand{\Xt}{e_2}
\newcommand{\Xtt}{e_{22}}
\newcommand{\Xot}{e_{12}}
\newcommand{\eo}{e_1}
\newcommand{\eoo}{e_{11}}
\renewcommand{\et}{e_2}
\newcommand{\ett}{e_{22}}
\newcommand{\eot}{e_{12}}
\newcommand{\Sym}{\operatorname{Sym}}
\newcommand{\opnm}{\operatorname}
\newcommand{\VIC}{\opnm{VIC}}
\newcommand{\VICinfty}{\VIC^{[\infty]}}
\newcommand{\new}{\newcommand}
\new{\Lisse}{\opnm{Lisse}}
\new{\ti}{\times}
\title
[$M_{0,5}$: Towards higher dimensions]{$M_{0,5}$: Towards the Chabauty-Kim method in higher dimensions}
\author{Ishai Dan-Cohen and David Jarossay}
\thanks{Both authors were supported by
 ISF grant 726/17.}
\date{\today}
\begin{document}

\maketitle

\begin{abstract}
If $Z$ is an open subscheme of $\Spec \ZZ$, $X$ is a sufficiently nice $Z$-model of a smooth curve over $\QQ$, and $p$ is a closed point of $Z$, the Chabauty-Kim method leads to the construction of locally analytic functions on $X(\Zp)$ which vanish on $X(Z)$; we call such functions ``Kim functions''. At least in broad outline, the method generalizes readily to higher dimensions. In fact, in some sense, the surface $M_{0,5}$ should be easier than the previously studied curve $M_{0,4} = \thrpl$ since its points are closely related to those of $M_{0,4}$, yet they face a further condition to integrality. This is mirrored by a certain \emph{weight advantage} we encounter, because of which, $M_{0,5}$ possesses \emph{new Kim functions} not coming from $M_{0,4}$. Here we focus on the case ``$\ZZ[1/6]$ in half-weight 4'', where we provide a first nontrivial example of a Kim function on a surface.

Central to our approach to Chabauty-Kim theory (as developed in works by S. Wewers, D. Corwin, and the first author) is the possibility of separating the geometric part of the computation from its arithmetic context. However, we find that in this case the geometric step grows beyond the bounds of standard algorithms running on current computers. Therefore, some ingenuity is needed to solve this seemingly straightforward problem, and our new Kim function is huge.

\end{abstract}

\setcounter{tocdepth}{1}
\tableofcontents

\raggedbottom
\SelectTips{cm}{11}

\section{Introduction}

The Chabauty-Kim method, introduced in \cite{kimi, kimii}, extends the classical Chabauty method in two (related) directions. By going to higher quotients of the fundamental group (where the Chabauty method stops at the abelianization) it produces $p$-adic analytic functions (``Kim functions'') which vanish on integral points, beyond the Chabauty bound. Thus, it can be applied in cases where the Chabauty method does not apply. However, even in cases where the Chabauty method does apply to produce a $p$-adic analytic function which can be used to bound the set of integral points, it rarely produces a sharp bound. As one climbs up the tower of unipotent quotients, however, the Chabauty-Kim method produces more functions. Together, these may be used to give a sharp bound. Indeed, according to Kim's conjecture \cite{nats}, this should be the case for (suitable integral models) of all hyperbolic curves.

Going exactly one step beyond the abelian quotient leads to the so-called \emph{quadratic Chabauty method}. In a growing number of cases \cite{BalakDogEffectiveCKThm,nats,BalakDogQuadI,BalakBesMullComputing,BalakBesMulQuadChab, BalakBesColGr, BalakAppendix, KimMassey} this has been worked out to produce numerical results, and those results have been used to provide numerical evidence for the conjecture. Of particular note is the work \cite{Balak13} whose final point-count (apart from verifying another case of the conjecture) solved an old and sought-after problem in arithmetic. 

The methods of Dan-Cohen--Wewers \cite{CKtwo, mtmue, mtmueii} and Corwin--Dan-Cohen \cite{PolGonI, PolGonII}, while so far limited to the simplest of all cases ($X = \thrpl$), have been particularly successful in going beyond the quadratic level. These articles incorporate the methods of mixed Tate motives and motivic iterated integrals (see, for instance, \cite{DelGon, GonGal, Brown, BrownDecomp}). A key point is the possibility of extracting the geometric aspects of the computation from their natural arithmetic surroundings. The result is an algorithm which includes among its subalgorithms a \emph{geometric step} and an \emph{arithmetic step}. The arithmetic step involves a search for enough motivic iterated integrals to generate suitable portions of the mixed Tate Hopf algebra, and its halting is conditional on conjectures of Goncharov, refined somewhat with respect to ramification. Before embarking on the present work, we regarded the geometric step as being comparatively simple, as it may, in principle, be solved by standard algorithms. 

Kim's method generalizes naturally to higher dimensions. The connection with the section conjecture suggests that a suitable generalization of Kim's conjecture may hold for anabelian varieties. In this article we take a conservative step in this direction. Kim's conjecture for $X = M_{0,4}$ over $Z \subset \Spec \ZZ$ implies Kim's conjecture for $Y = M_{0,5}$ (the moduli space of genus $0$ curves with $5$ marked points) over $Z$. Nevertheless, as we go up from $M_{0,4}$ to $M_{0,5}$, we encounter a \emph{weight advantage}, which allows us to construct Kim functions on $M_{0,5}(\Zp)$ not coming from $M_{0,4}$ (see \S\ref{addendum}).
Our first opportunity to take advantage of this weight advantage occurs for $\ZZ[1/6]$ in half-weight $4$, and it is this one case that we focus on in the present work. 

Our conclusions (so far) are somewhat mixed. Much of the work for $M_{0,4}$ generalizes readily. However, the geometric step via standard computational methods has turned out to be computationally infeasible. With a careful (but elementary) analysis of the geometric step (and a certain method involving resultants) we are nevertheless able to produce a new Kim function, which turns out to be huge. 

To state our result, let us recall Kim's method in outline. Our purpose here is only to fix notation and terminology, and we refer the reader for instance to \cite{nats} for a general introduction, and to \cite{mtmue} for our mixed Tate version. The Chabauty-Kim method applied to a smooth mixed-Tate variety $Y$ over $Z \subset \Spec \ZZ$, a prime of good reduction $p \in Z$, and a finite type $\Gm$-equivariant quotient
\[
\pi_1^\un(Y,b) \surj \pi'
\]
of the unipotent fundamental group of $Y$ at the $Z$-integral base-point $b$, revolves around a commuting diagram
\[
\xymatrix{
Y(Z) \ar[d]_\ka \ar[r] & Y(\Zp) \ar[d]^\al \\
H^1(\pi_1^{\MT}(Z, \dR), \pi'_\dR)_\Qp \ar[r]_-{LR}
 & \pi'_{\dR, \Qp}.
}
\]
which we refer to as \emph{Kim's cutter}. Here $\pi_1^{\MT}(Z, \dR)$ is the mixed Tate Galois group of $Z$ at the de Rham fiber functor, the decoration ``dR'' denotes de Rham realization, $\ka$ is the unipotent Kummer map, $\al$ is the unipotent Albanese map --- a morphism of $p$-adic analytic spaces, and $\m{LR}$, which is a map of finite-type $\Qp$-varieties (in our case, affine spaces) is obtained by a combination of localization and realization. The coordinate ring $A'$ of $\pi'$ is graded by the $\Gm$-action hiding inside the action of
\[
\pi_1^{\MT}(Z, \dR) = \Gm \ltimes \pi_1^\un(Z, \dR)
\]
on $\pi'_\dR$ and we refer to the graded degree of a function as its \emph{half-weight}. If $f \in A'_{\Qp, n}$ is a function of half-weight $n$ such that $\m{LR}^\sharp(f) = 0$ then $f^\m{BC}:=\al^\sharp f$ is a Besser-Coleman function on $Y(\Zp)$ which vanishes on $Y(Z)$. We refer to such a function as a \emph{$p$-adic Kim function for $Y$ over $Z$ in half-weight $n$}. Let $p$ be any prime not equal to $2$ or $3$.

\begin{sthm}
\label{fbctheorem}
The function $F^{BC}$ on $M_{0,5}(\Zp)$ constructed in \S\ref{waca2} below is a $p$-adic Kim function for $M_{0,5}$ over $\ZZ[1/6]$ in half-weight four\footnote{up to a small $p$-adic error}.
\end{sthm}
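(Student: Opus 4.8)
The plan is to construct the function $F$ explicitly as an element of $A'_{\QQ,4}$ lying in the kernel of $\m{LR}^\sharp$, and then to verify that $\al^\sharp F$ has the required vanishing property. The first task is to pin down the relevant quotient $\pi'$ of $\pi_1^\un(M_{0,5},b)$: we will work with the depth- or weight-truncated quotient large enough to see half-weight four, and identify its de Rham coordinate ring $A' = A^{dR}$ together with the coaction of the mixed Tate Hopf algebra of $\ZZ[1/6]$. Because $\ZZ[1/6]$ is a Tate--Lichtenbaum-type ring with known motivic Galois group, the target $H^1(\pi_1^{\MT}(\ZZ[1/6],\dR),\pi'_\dR)$ and the map $\m{LR}$ are, in principle, completely computable; the point of the \emph{geometric step} is to write down $\m{LR}$ as an explicit polynomial (here, affine-linear on affine spaces) map in coordinates.

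The core of the proof, and the place where I expect the real work to lie, is precisely this geometric step: producing the matrix/coefficients of $\m{LR}^\sharp$ in half-weight four on the surface. As the introduction already warns, the naive approach — running the standard algorithm of \cite{CKtwo, mtmue} adapted to $M_{0,5}$ — blows up beyond what current machines handle, so I would instead exploit the close relationship between $M_{0,5}$ and $M_{0,4}=\thrpl$: the two projection maps $M_{0,5}\to M_{0,4}$ (and the other natural maps to $M_{0,4}$) induce maps on fundamental groups, hence on the relevant Hopf algebras and cohomology, and one can assemble the $M_{0,5}$ picture in half-weight four by a careful bookkeeping of the $M_{0,4}$ data in lower half-weights together with the new classes supplied by the \emph{weight advantage} of \S\ref{addendum}. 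The resultant method alluded to in the introduction is what lets one avoid solving the full linear-algebra system directly: eliminate variables so as to isolate a single candidate $F$ in the kernel. The main obstacle is thus keeping this computation both feasible and verifiably correct given its size — the output being ``huge'' means the verification that $\m{LR}^\sharp(F)=0$ must be done symbolically/structurally rather than by inspection.

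Once $F$ is in hand with $\m{LR}^\sharp(F)=0$, the remaining step is formal. By the commutativity of Kim's cutter, $\al^\sharp F$ vanishes on the image of $Y(Z)$ under $\al$ composed with the identification; concretely, for any $y\in M_{0,5}(\ZZ[1/6])$ the class $\ka(y)$ maps under $\m{LR}$ to $\al(y)$, so $F^{BC}(y) = (\al^\sharp F)(y) = F(\m{LR}(\ka(y))) = (\m{LR}^\sharp F)(\ka(y)) = 0$. That $F^{BC}$ is a Besser--Coleman (locally analytic) function on $M_{0,5}(\Zp)$ follows from the fact that $\al$ is a morphism of $p$-adic analytic spaces and $F$ is a polynomial on the affine space $\pi'_{\dR,\Qp}$. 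The parenthetical ``up to a small $p$-adic error'' in the statement reflects that in practice $F$ is computed to finite $p$-adic precision (and the comparison between the motivic/de Rham side and the $p$-adic realization is controlled only up to a bounded error term); so the honest claim is that $F^{BC}$ agrees with an exact Kim function modulo an explicitly bounded power of $p$, which is all that is needed to make it useful for cutting out integral points.
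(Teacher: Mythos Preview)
Your outline is correct and matches the paper's approach: the paper constructs $F$ via the resultant method (\S\ref{6s1}--\ref{6s2}), using the three maps $M_{0,5}\to M_{0,4}$ given by $z_1$, $z_2$, $z_1z_2$ to produce polynomials $p_1,p_2,p_3$ with $p_i^\theta(\Phi^\tau_{e_i})=0$ and then taking a double resultant exploiting $\Phi^\tau_{e_1}+\Phi^\tau_{e_2}=\Phi^\tau_{e_1+e_2}$; the arithmetic step (\S\ref{sobsection}) converts the coefficients to $p$-adic numbers, Proposition~\ref{albcoords} identifies $\al^\sharp f_\lambda$ with explicit polylogarithms, and \S\ref{wacasection} assembles these exactly along the lines of your final paragraph. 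The only refinement worth noting is that the paper works not with $\m{LR}$ directly but with its base-change to the generic point $\eta(Z)_{\ge -4}$ of $\pi_1^\un(Z)_{\ge -4}$ (the map $\theta$ of \S\ref{gs1}), which is what makes the resultant computation a matter of pure commutative algebra over a function field.
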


Unlike the unipotent fundamental group of $\thrpl$, which has been studied to death, working with the unipotent fundamental group of $M_{0,5}$ requires that we address some basic issues ourselves. This mostly concerns a certain analog of the polylogarithmic quotient. For instance, in \S\ref{polquosection}, we revisit (and generalize in a straightforward way) the beautiful proof due to Deligne \cite{Deligne89} and Deligne--Goncharov \cite{DelGon} that the polylogarithmic quotient is semisimple. We also give an explicit algebra-basis for the algebra of functions on our polylogarithmic quotient in \S\ref{ab0}.

Since the polylogarithmic quotient is not fixed under automorphisms of $M_{0,5}$, translation by automorphisms gives rise to inequivalent Kim functions. We may then ask if the vanishing locus of $F^\m{BC}$ and its conjugates is finite, or even equal to the set of integral points. Precedent for computations of this sort may be found in \cite{BalBesBiaMul,DograUnlikely,HastFuncTransc}. Unfortunately, the large size of $F^\m{BC}$ presents a hurdle to computation. We hope to face this challenge in a separate future work. 

The tower of moduli spaces of curves with marked points, and especially its first two steps $M_{0,4}$ and $M_{0,5}$, plays a central role in Grothendieck's vision for anabelian geometry, and, relatedly, in motivating relations between multiple zeta values (complex and $p$-adic) \cite{FurushoPenagonHexagon,FurushoJafari,BesserFurusho,OiUeno} (see also \cite[Chapter 25]{AndreIntro} and \cite{FresanGil} and the references there). We hope that further investigation may shed some light, in one direction, on the interaction between Kim's cutter and the tower. In another direction, we hope to better understand how the geometry of the tower controls relations between motivic iterated integrals. In turn, this may lead to a better understanding of the ramification of motivic iterated integrals and hence to more precise $S$-integral refinements of Goncharov's conjectures. As explained in \cite{mtmue, mtmueii, PolGonI, PolGonII}, our algorithms for $\thrpl$ rely on such statements for halting, and a better understanding will lead to faster and more elegant algorithms. As we explain in \cite{M0n}, our methods with resultants also help to clarify and simplify the geometric step for $\thrpl$ and for punctured lines in general. 

This article does not include introductory material on the moduli space $M_{0,5}$; the facts we use, which we learned for instance from \cite{GonchManin, BrownMZVandPeriodsOfM0n} as well as the references given above, are summarized in \S\ref{polquosection}. This article is also written in correct logical order, which, at least in this case, runs counter to the natural flow of exposition. Indeed, sections \ref{ap0}--\ref{sobsection} make no mention of $M_{0,5}$. Most readers will want to start with \S\ref{polquosection} and to refer back as needed. 

Speaking of \emph{order}, a word is in order concerning the order of multiplication in fundamental groups. For many reasons, it seems to us far more natural to let $\al \be$ denote first $\be$, then $\al$. For instance, this is the notation used in \emph{category theory}, which is why it is sometimes referred to as the ``functorial order''. The reverse ``lexicographic'' order, it seems to us, leads to the systematic reversal of a vast swath of mathematics. However, for reasons we do not understand, there appears to be quite a tradition of using lexicographic order, and the authors of \cite{PolGonI}, in particular, chose to follow this tradition. Thus, we're forced to resolve this conflict within the body of the article by using both orderings and spelling out where and how we transition between them. For this purpose, we prefer not to think of the question of ordering as being merely a matter of \emph{notation}. Rather, given a Tannakian category, we have a \emph{functorial fundamental groupoid} and a \emph{lexicographic fundamental groupoid}, and the one is the opposite of the other. To prevent any confusion, we systematically denote objects stemming from the lexicographic fundamental groupoid with a superscript `$\times$', which stands for the `x' in \textit{lexicographic}.

Finally, the reader may have noticed a footnote, according to which, for Theorem \ref{fbctheorem} to be precise we would need to bound the error incurred by our $p$-adic approximations. This task, while somewhat tedious, presents no particular difficulty. Since our purpose here is to demonstrate a method (its promise, and its challenges), we have chosen not to carry this out. If the reader is disturbed by this logical wrinkle, she may view the main result of this work as a fully fleshed-out algorithm which associates to every $\ep >0$ a Besser-Coleman function $F^\m{BC}_\ep$ on $M_{0,5}(\Zp)$ which is within $\ep$ of a Kim function. The particular function we construct is then an \emph{example} with $\ep$ fixed. As explained for instance in \cite{mtmueii}, such an algorithm suffices for the application to integral points and the verification of Kim's conjecture. 

In fact, we do not expect our formula for $F^\m{BC}_\ep$ in terms of polylogarithms to change as $\ep$ shrinks further. Proving this would require proving that the formulas for decomposition of certain motivic polylogarithms in terms of shuffle coordinates on the mixed Tate Galois group obtained via computations of certain $p$-adic periods in \S\ref{sobsection} hold precisely (and not only to within $\ep$). Some methods for doing so are demonstrated for instance in \cite{mtmue, PolGonI} (along with attribution to those who taught us these methods). But these are not needed for the application to integral points.

\subsection*{Acknowlegements}
We would like to thank Jennifer Balakrishnan, Amnon Besser and Hidekazu Furusho for their interest and encouragement as well as for helpful conversations. In particular, we thank Balakrishnan for her participation in our attempts to approach the ``geometric step'' via computer computation. Finally, we are grateful to the referee for many helpful comments and suggestions.

\section{The polylogarithmic Lie algebra in abstraction}
\label{ap0}

\subsection{} \label{1.1}
Let $L$ be the free Lie algebra on the set $\Gamma$ of generators
\[
e_1, e_{11}, e_2, e_{22}, e_{12}
\]
modulo the relations 
\[
\tag{R}
[e_1, e_2] = [e_{11}, e_2] = [e_1, e_{22}] = 0 
\]
\[
[e_{11}, e_{22}] = -[e_{11}, e_{12}] = [e_{22}, e_{12}] = 
[e_2-e_1, e_{12}].
\]
Let $N$ be the Lie ideal generated by $\Xoo, \Xtt, \Xot$. We define the \emph{polylogarithmic Lie algebra (for $M_{0,5}$)} by 
\[
L^{\PL} = L/[N,N].
\]
Note that in $L^{\PL}$ we have
\[
[\Xo, \Xot] = [\Xt, \Xot].
\]

\begin{slm} \label{echo1}
In $L^{\PL}$, we have
\[
\tag{*}
(\ad \Xo)^n(\Xot) = (\ad \Xt)^n(\Xot).
\]
\end{slm}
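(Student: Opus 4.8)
The plan is a straightforward induction on $n$. The base case $n = 1$ is the identity $[\Xo, \Xot] = [\Xt, \Xot]$ already noted above, which itself comes from the relation $[\Xtt, \Xot] = [\Xt - \Xo, \Xot]$ in (R) together with the fact that $[\Xtt, \Xot]$ lies in $[N,N]$ (both $\Xtt$ and $\Xot$ being among the generators of $N$) and hence vanishes in $L^{\PL}$. For the inductive step, set $x_n := (\ad \Xt)^n(\Xot)$; using the inductive hypothesis $(\ad \Xo)^n(\Xot) = x_n$ we get
\[
(\ad \Xo)^{n+1}(\Xot) = [\Xo, (\ad \Xo)^n(\Xot)] = [\Xo, x_n],
\]
so it is enough to show $[\Xo - \Xt, x_n] = 0$ in $L^{\PL}$, since then $(\ad \Xo)^{n+1}(\Xot) = [\Xt, x_n] = x_{n+1} = (\ad \Xt)^{n+1}(\Xot)$.

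I would prove the auxiliary statement $[\Xo - \Xt, x_n] = 0$ (for all $n \ge 0$) by its own induction on $n$. The case $n = 0$ is again the noted identity $[\Xo, \Xot] = [\Xt, \Xot]$. For the step, the Jacobi identity gives
\[
[\Xo - \Xt, x_{n+1}] = [\Xo - \Xt, [\Xt, x_n]] = [[\Xo - \Xt, \Xt], x_n] + [\Xt, [\Xo - \Xt, x_n]].
\]
Here $[[\Xo - \Xt, \Xt], x_n] = 0$ because $[\Xo - \Xt, \Xt] = [\Xo, \Xt] = 0$ by (R), and $[\Xt, [\Xo - \Xt, x_n]] = 0$ by the auxiliary inductive hypothesis; hence $[\Xo - \Xt, x_{n+1}] = 0$, closing the induction.

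There is no serious obstacle here: everything is formal manipulation with the Jacobi identity and the single relation $[\Xo, \Xt] = 0$, the only ``structural'' input being used exactly once, in the base case, to see that $[\Xtt, \Xot]$ dies in $L^{\PL}$. If one prefers, the two nested inductions can be merged into a single induction simultaneously establishing $(\ad \Xo)^n(\Xot) = (\ad \Xt)^n(\Xot)$ and $[\Xo - \Xt, (\ad \Xt)^n(\Xot)] = 0$, but keeping the two mechanisms separate as above seems cleanest.
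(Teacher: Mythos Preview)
Your proof is correct and follows essentially the same route as the paper's: induction on $n$, with the inductive step reduced via the Jacobi identity to the commutativity $[e_1,e_2]=0$ and the base case $[e_1,e_{12}]=[e_2,e_{12}]$. The only cosmetic difference is that the paper packages the commutation as the operator identity $(\ad e_1)^n \circ \ad e_2 = \ad e_2 \circ (\ad e_1)^n$, whereas you phrase it as the vanishing $[e_1-e_2,\,(\ad e_2)^n e_{12}]=0$; these are interchangeable.
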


\begin{proof}
Since $\Xo, \Xt$ commute, we have, by the Jacobi identity, for all $Y$, 
\[
[\Xo,[\Xt,Y]] = [\Xt,[\Xo,Y]].
\]
Hence by induction 
\[\tag{**}
(\ad \Xo)^n([\Xt,Y]) = [e_2, (\ad \Xo)^n(Y)].
\]
We use (**) to establish (*) by induction on $n$.  As noted above, the case $n=1$ follows directly from the defining relations. Suppose the lemma holds up to $n \ge 1$. Then
\begin{align*}
(\ad \Xo)^{n+1}(\Xot) 
	&=(\ad \Xo)^{n}([\Xo, \Xot]) \\
	&= (\ad \Xo)^{n}([\Xt, \Xot]) \\
	&= [e_2, (\ad \Xo)^n(\Xot)] \\
	&= [e_2, (\ad \Xt)^n(\Xot)] \\
	&=  (\ad \Xt)^{n+1}(\Xot)
					\end{align*}
which establishes the induction step and hence the lemma.
\end{proof}

\begin{slm}\label{liebasis}
The Lie algebra $L^{\PL}$ has vector space basis $S=$
\[
\Xo, \; \Xt, \; (\ad \Xo)^n(\Xoo), \; (\ad \Xt)^n(\Xtt), \;
(\ad \Xo)^n(\Xot) = (\ad \Xt)^n(\Xot)
\]
($n \ge 0$). Among these basis elements we have the evident brackets
\[
[e_i, (\ad e_i)^n(e_{jk})] = (\ad e_i)^{n+1}(e_{jk})
\]
and all other (ordered) brackets vanish. 
\end{slm}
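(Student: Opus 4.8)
The plan is to exhibit an explicit model for $L^{\PL}$ and match it against $S$. I would first check that $S$ spans $L^{\PL}$. Since the relations (R) force $[\Xo,\Xt]=0$, the operators $\ad\Xo$ and $\ad\Xt$ commute on $L^{\PL}$. The Lie ideal $N$ of $L$ is spanned by left-normed brackets $(\ad g_k)\cdots(\ad g_1)(\Xoo)$, $(\ad g_k)\cdots(\ad g_1)(\Xtt)$, $(\ad g_k)\cdots(\ad g_1)(\Xot)$ with each $g_i$ one of the five generators; whenever at least one $g_i$ lies in $\{\Xoo,\Xtt,\Xot\}$ the bracket lies in $[N,N]$ (factor out the outermost such $\ad g_i$, using that $[N,N]$ is an ideal of $L$), so modulo $[N,N]$ we may assume all $g_i\in\{\Xo,\Xt\}$, and then commuting the $\ad$'s we are left with $(\ad\Xo)^a(\ad\Xt)^b$ applied to $\Xoo$, $\Xtt$ or $\Xot$. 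The relations $[\Xoo,\Xt]=0$ and $[\Xo,\Xtt]=0$ kill those with $b\ge 1$ (resp.\ $a\ge 1$) in the first (resp.\ second) family, and Lemma~\ref{echo1} collapses the third family to the single sequence $(\ad\Xo)^n(\Xot)$. Together with $L^{\PL}=\QQ\Xo+\QQ\Xt+N$ (because $L/N$ is the two-dimensional abelian Lie algebra on $\Xo,\Xt$), this shows $S$ spans.

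The heart of the matter is linear independence, and for this I would produce a Lie algebra $\til L$ in which the images of the elements of $S$ are visibly a basis, together with a homomorphism $L^{\PL}\to\til L$. Take $\til L=\fk a\ltimes V$, where $\fk a=\QQ\Xo\oplus\QQ\Xt$ is abelian and $V=\QQ[x]\,\overline{\Xoo}\oplus\QQ[y]\,\overline{\Xtt}\oplus\QQ[t]\,\overline{\Xot}$ is an abelian Lie algebra (each summand a polynomial ring regarded as a vector space), made into an $\fk a$-module by declaring $\ad\Xo$ to act as multiplication by $x$ on the first summand, by $0$ on the second, and by $t$ on the third, while $\ad\Xt$ acts by $0$, by multiplication by $y$, and by $t$, respectively. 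These two operators commute, which is exactly the instance of the Jacobi identity needed for $\fk a\ltimes V$ to be a Lie algebra (every other instance is automatic because $V$ and $\fk a$ are abelian). The assignment $\Xo,\Xt,\Xoo,\Xtt,\Xot\mapsto\Xo,\Xt,\overline{\Xoo},\overline{\Xtt},\overline{\Xot}$ respects the relations (R): the first line holds because $\fk a$ is abelian and $\ad\Xt(\overline{\Xoo})=\ad\Xo(\overline{\Xtt})=0$, and on the second line all four expressions vanish in $\til L$ — the last one because $(\ad\Xt-\ad\Xo)(\overline{\Xot})=t\overline{\Xot}-t\overline{\Xot}=0$. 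It therefore defines a homomorphism $L\to\til L$, which kills $[N,N]$ since $N$ lands in the abelian ideal $V$; hence it descends to $\bar\phi\colon L^{\PL}\to\til L$.

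Finally, $\bar\phi$ sends the spanning set $S$ bijectively onto the evident basis of $\til L$: $(\ad\Xo)^n(\Xoo)\mapsto x^n\overline{\Xoo}$, $(\ad\Xt)^n(\Xtt)\mapsto y^n\overline{\Xtt}$, $(\ad\Xo)^n(\Xot)=(\ad\Xt)^n(\Xot)\mapsto t^n\overline{\Xot}$, and $\Xo,\Xt\mapsto\Xo,\Xt$. A linear map carrying a spanning set to a linearly independent set forces that spanning set to be linearly independent; hence $S$ is a basis of $L^{\PL}$ and $\bar\phi$ is an isomorphism. The stated bracket relations then hold because they hold in $\til L$, where they are immediate from the definition of the $\fk a$-action on $V$ and the vanishing of $[V,V]$. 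I expect the only real friction to be in the bookkeeping of the spanning step — reducing arbitrary iterated brackets modulo $[N,N]$ to left-normed brackets in $\ad\Xo$ and $\ad\Xt$ — and in choosing the $\fk a$-module $V$ so that all of (R) is satisfied on the nose; the Jacobi verification for $\til L$ collapses to the single statement that two multiplication operators commute.
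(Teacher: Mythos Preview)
Your proof is correct and in fact cleaner than the paper's. Both arguments split into \emph{spanning} and \emph{linear independence}, but the execution differs.

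For spanning, the paper proceeds by direct case-checking: it verifies that for each generator $Y\in\Gamma$ and each proposed basis element $Z$, the bracket $[Y,Z]$ is again in $S$ or vanishes (including an inductive proof that $[\Xt,(\ad\Xo)^n(\Xoo)]=0$). Your argument is more structural: you reduce everything in $N$ modulo $[N,N]$ to left-normed brackets $(\ad\Xo)^a(\ad\Xt)^b$ applied to one of $\Xoo,\Xtt,\Xot$, then use the relations and Lemma~\ref{echo1} to collapse each family. Both work; yours avoids the individual inductions.

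For linear independence, the paper uses \emph{two} auxiliary targets: a map to $L_1\oplus L_2$ (two copies of the $\thrpl$-polylog Lie algebra) to separate the $\Xoo$- and $\Xtt$-towers, and a separate map to an ad hoc Lie algebra $L'''$ to show nonvanishing of the $\Xot$-tower, followed by a trick (apply $\ad\Xo$ and $\ad\Xt$ to a putative relation and subtract) to combine these partial results. You instead construct a \emph{single} model $\til L=\fk a\ltimes V$ which realizes $L^{\PL}$ on the nose, carrying $S$ bijectively to the obvious basis of $\til L$. This is more direct: once you check that the assignment respects the relations (R) --- which you do, the key point being $(\ad\Xt-\ad\Xo)(\overline{\Xot})=0$ --- the isomorphism $L^{\PL}\cong\til L$ delivers both linear independence and the bracket table simultaneously. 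The paper's approach has the minor advantage that its auxiliary Lie algebras $L_1,L_2$ arise naturally from the geometry (the polylog quotient for $\thrpl$), while your $\til L$ is tailored to the answer; but as a proof of the lemma yours is shorter and more transparent.
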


The proof spans segments \ref{2}-\ref{4}.

\subsection{}\label{2}
We first show that $S$ spans $L^{\PL}$. For this, by induction, it's enough to show that for any generator $Y$ and any basis element $Z$, the bracket $[Y,Z]$ is again a basis  element or zero. We check case by case by direct elementary calculation. If $Z = e_i$ ($i = 1$ or $2$) this is clear. Next we consider the case
\[
Z =  (\ad \Xo)^n(\Xoo).
\]
If $Y = e_1$ ok; if $Y = e_{jk} \in N$ then the bracket is zero. If $Y = e_2$ we show
\[
\tag{*}
[e_2,  (\ad \Xo)^n(\Xoo)] = 0
\]
by induction on $n$. The base case is a defining relation. For $n \ge 1$ we have
\begin{align*}
[e_2,  &(\ad \Xo)^n(\Xoo)]
	= [e_2, [\Xo, (\ad \Xo)^{n-1}(\Xoo)]] \\
	&= - [(\ad \Xo)^{n-1}(\Xoo), [\Xt, \Xo] ] 
	- [\Xo, [(\ad \Xo)^{n-1}(\Xoo), \Xt]] \\
	&= [\Xo,[\Xt,(\ad \Xo)^{n-1}(\Xoo)]] \\
	&= [\Xo, 0]
\end{align*}
by the inductive hypothesis as desired. The case 
\[
Z =  (\ad \Xt)^n(\Xtt)
\]
follows by symmetry. Finally, the case
\[
Z =  (\ad \Xo)^n(\Xot) =  (\ad \Xt)^n(\Xtt)
\]
is immediate. This completes the verification that $S$ spans $L^{\PL}$. 

\subsection{}\label{3}
We turn to the linear independence. We endow the free Lie algebra $L(\Gamma)$ with the $\ZZ^4$-grading in which $e_1$, $e_2$ are both homogeneous of degree $(1,0,0,0)$ and $e_{11},e_{12},e_{22}$ are homogeneous of degrees $(0,1,0,0),(0,0,1,0),(0,0,0,1)$, respectively. We use this alternate grading in place of the prevailing $\ZZ$-grading for the remainder of this paragraph. The relations \ref{1.1}(R) are homogeneous for this alternate grading. Since the Lie ideal $N$ is homogeneous, so is the Lie ideal $[N,N]$. Consequently, the alternate grading descends to a grading on $L^\m{PL}$.

There's an obvious homomorphism from $L^{\PL}$ to the free abelian Lie algebra on the 5 generators, hence the set of generators is linearly independent. Since the higher-degree elements
\[
(\ad \Xo)^n(\Xoo), \; (\ad \Xt)^n(\Xtt), \;
(\ad \Xo)^n(\Xot) = (\ad \Xt)^n(\Xot)
\]
are homogeneous of different degrees, it's enough to show that each one is nonzero. 

Let $L_{1}$ denote the polylogarithmic Lie algebra for $\thrpl$ with vector space basis
\[
Y_1, Y_{11}, (\ad Y_1)(Y_{11}), (\ad Y_1)^2(Y_{11}), \dots.
\]
The homomorphism
\[
L(\Gamma) \to L_1
\]
which maps
\[
e_1 \mapsto Y_1, 
\quad 
e_{11} \mapsto Y_{11},
\]
and the remaining generators to $0$, factors through $L^{\PL} \to L_1$. In $L_1$ the element 
$
(\ad Y_1)^n(Y_{11}) \neq 0
$, and it follows that
$
(\ad e_1)^n(e_{11}) \neq 0
$. 
Replacing 1's by 2's, we find that 
$
(\ad e_2)^n(e_{22}) \neq 0.
$
Sending instead
\[
e_1 \mapsto Y_1, 
\quad 
e_2 \mapsto Y_1, 
\quad 
e_{12} \mapsto Y_{11},
\]
and the remaining generators to $0$, we find that
$
(\ad e_1)^n(e_{12}) \neq 0.
$

\subsection{}\label{4}
Finally, the statement regarding the brackets among the basis elements is immediate from the defining relations, from the vanishing \ref{2}(*) and from its symmetrical twin
\[
[e_1,  (\ad \Xt)^n(\Xtt)] = 0.
\]
This completes the proof of Lemma \ref{liebasis}.

\section{The polylogarithmic Hopf algebra in abstraction}
\label{ab0}

\subsection{}\label{freepro}\label{ab1}
We fix a base field $k$ of characteristic $0$. We refer the reader to \S2 of \cite{mtmue} for an efficient review of the basics of free prounipotent groups in notation similar to ours. If $S$ is a set, we denote by $L(S)$ the free Lie algebra on $S$ and by $\nN(S)$ the free pronilpotent Lie algebra on $S$. We denote the associated \emph{free prounipotent $k$-group} by $\pi(S)$. We let $\Uu(S)$ denote the completed universal enveloping algebra of $\nN(S)$ and we let $A(S)$ denote the coordinate ring of $\pi(S)$. We recall that there's a nondegenerate pairing
\[
\langle \cdot, \cdot \rangle:
\Uu(S) \times A(S) \to k.
\]
We describe elements of the free monoid generated by $S$ as \emph{associative words} in order to distinguish them from Lie words. If $W$, $W'$ are associative words, we say that $W'$ is a \emph{subword} of $W$ if there exist words $W_1$, $W_2$ such that $W = W_1 W' W_2$.  If $\sum a_i w_i$ is a finite linear combination of associative words in $S$, we let $f_{\sum a_i w_i} = \sum a_i f_{w_i}$ denote the dual vector in $A(S)$.

\subsection{}\label{ab2}
Recall that $L^{\PL}$ denotes the polylogarithmic Lie algebra (\ref{1.1}). We repeat the notation of \S\ref{ab1} with the decoration `PL' everywhere; thus, $\nN^{\PL}$ denotes the pronilpotent completion, $\Uu^{\PL}$ the completed universal enveloping algebra, $\pi^{\PL}$ the prounipotent group, and $A^{\PL} = \Oo(\pi^{\PL})$ the associated Hopf algebra. In terms of the set $\Gamma$ of \S\ref{ap0}, there's a natural surjection
\[
\nN(\Gamma) \surj \nN^{\PL},
\]
hence an associated inclusion 
\[
A(\Gamma) \supset A^{\PL}
\]
of Hopf algebras. 

By a \emph{Lie word} $W \in L(\Gamma)$ in the set of generators $\Gamma$ we mean an element of the form
\[
[Y_1[Y_2[ \cdots [Y_{n-1},Y_n] \cdots]]]
\]
with $Y_i \in \Gamma$. We set
\[
\m{depth}(e_i) = 0,
\] 
\[
\m{depth}(e_{jk}) =1,
\]
and we define the depth of a Lie word to be the sum of the depths of its letters $Y_i$.

\label{ab2.5}
Let $\aA$ denote the kernel of 
\[
L(\Gamma) \surj L^{\PL}.
\]
The Lie ideal $\aA < L(\Gamma)$ is generated by all Lie words of depth $2$ together with the elements
\[
[e_1, \Xt], [\Xoo, \Xt], [\Xo, \Xtt], [e_1, \Xot]-[\Xt, \Xot].
\]
We refer to a set of free generators in a polynomial algebra as an \emph{algebra basis}.

\begin{slm}\label{ab3}
The elements
\[
\tag{*}
f_{e_1}, f_{e_2},
f_{\Xoo\Xo^n}, f_{\Xtt\Xt^n}, 
f_{\eot(\eo+\et)^n}
\quad
n \ge 0
\]
of $A(\Gamma)$ form an algebra basis of $A^{\PL}$.
\end{slm}

The proof of Lemma \ref{ab3} spans segments \ref{ab3s}-\ref{ab3e}.

\subsection{}\label{ab3s} 
We first show that the elements \ref{ab3}(*) are contained in $A^{\PL}$. 
Let $f$ be one of them. It's enough to show that $f$ vanishes on the two-sided ideal $\Uu \aA \Uu$. Thus, referring to \S\ref{ab2.5}, it's enough to show that for any of the Lie-word generators $\la \in \aA$ described there and any two associative words $W, W' \in \Uu$, 
\[
f(W\la W') = 0.
\]
This is clear for all but the two cases
\[
f_{\eot(\eo+\et)^n}(W[\eo,\et]W') 
\quad \mbox{and} \quad
f_{\eot(\eo+\et)^n}(W ([\eo,\eot]-[\et,\eot])W').
\]
In turn, some simple but tedious combinatorics show that these vanish as well. 

\subsection{}\label{rufprod}
The bijections
\[
(\Uu(\Gamma)/I^n)^\lor = \Uu(\Gamma)/I^n
\] 
induced by the basis of associative words in $\Gamma$ endow
\[
A(\Gamma) = \lim_\to (\Uu(\Gamma)/I^n)^\lor
\]
with a second product $\ast$ which satisfies
\[
f_v \ast f_w := f_{vw}.
\]
The operation $\ast$ also induces left and right actions of $A(\Gamma)$ on $A(\Gamma) \otimes A(\Gamma)$ in an obvious way.

\begin{slm}\label{ruffleprod1}
For any letter $e$ and any linear combination of associative words $\sum a_i w_i$, the deconcatenation coproduct in the shuffle algebra $A(\Gamma)$ satisfies 
\[
\Delta (f_{(\sum a_i w_i)e}) 
=  
(\Delta f_{\sum a_i w_i}) \ast f_e + f_{(\sum a_i w_i)e} \varotimes 1,
\]
and, symmetrically 
\[
\Delta (f_{e(\sum a_i w_i)}) 
=
f_e \ast  (\Delta f_{\sum a_i w_i})  + 1 \varotimes  f_{e(\sum a_i w_i)}.
\]
\end{slm}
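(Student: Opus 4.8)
The plan is to verify the two formulas directly at the level of the pairing $\langle\,\cdot\,,\,\cdot\,\rangle\colon \hat\Uu(\Gamma)\times A(\Gamma)\to k$, using that both sides of each asserted identity live in $A(\Gamma)\otimes A(\Gamma)$ and that $A(\Gamma)\otimes A(\Gamma)$ is (topologically) dual to $\hat\Uu(\Gamma)\otimes\hat\Uu(\Gamma)$, with the pairing on pure tensors $(u\otimes u', \phi\otimes\phi')\mapsto \langle u,\phi\rangle\langle u',\phi'\rangle$. By linearity and continuity it suffices to check the first identity after pairing both sides against $v\otimes w$ for arbitrary associative words $v,w$ in $\Gamma$; the second is then immediate by the obvious left-right symmetry (or by applying the first to the reversed word and using the antipode/word-reversal symmetry of the shuffle algebra). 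So the single computation to do is
\[
\big\langle v\otimes w,\ \Delta f_{(\sum a_i w_i)e}\big\rangle
\ \overset{?}{=}\
\big\langle v\otimes w,\ (\Delta f_{\sum a_i w_i})\ast f_e\big\rangle
+\big\langle v\otimes w,\ f_{(\sum a_i w_i)e}\varotimes 1\big\rangle.
\]

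First I would unwind the left-hand side. By definition of the deconcatenation coproduct on the shuffle algebra $A(\Gamma)$, $\langle v\otimes w,\ \Delta f_u\rangle = \langle vw, f_u\rangle = \delta_{u,\,vw}$ for any associative word $u$; hence, extending linearly, the left side equals the coefficient of the word $vw$ in $(\sum a_i w_i)e$, i.e.\ it is $a_i$ if $vw = w_i e$ for some $i$ and $0$ otherwise. In particular it is nonzero only if $w$ is nonempty and ends in $e$, \emph{or} $w$ is empty and $v$ ends in $e$. Next I would unwind the right-hand side. The second term pairs to $\delta_{w,\varnothing}$ times the coefficient of $v$ in $(\sum a_i w_i)e$, which accounts exactly for the ``$w$ empty, $v$ ends in $e$'' case. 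For the first term, I use the definition of $\ast$ on $A(\Gamma)\otimes A(\Gamma)$: writing $\Delta f_{\sum a_i w_i} = \sum_i a_i \sum_{w_i = w_i' w_i''} f_{w_i'}\otimes f_{w_i''}$, the right action by $f_e$ on the right-hand tensor factor sends $f_{w_i'}\otimes f_{w_i''}$ to $f_{w_i'}\otimes f_{w_i'' e}$; pairing against $v\otimes w$ then picks out the terms with $w_i' = v$ and $w_i'' e = w$, i.e.\ $w$ nonempty ending in $e$ with $w = w''e$ and $w_i = v w''$, contributing $a_i$ when $w_i = v\,w''$ and $w = w'' e$ — which is exactly the condition $vw = w_i e$ with $w$ nonempty. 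Matching the two case analyses term by term gives the identity.

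I do not expect any genuine obstacle here; the lemma is essentially a compatibility between deconcatenation and the concatenation-dual product $\ast$, and the only care required is bookkeeping: being precise about which tensor slot $\ast f_e$ acts on (the right one, matching the placement of $e$ on the right of the word), handling the edge case where the deconcatenation puts the final letter $e$ into the right-hand factor versus the degenerate splitting $w_i = w_i\cdot\varnothing$, and confirming the empty-word term $f_{(\sum a_i w_i)e}\varotimes 1$ is precisely the ``leftover'' not captured by $(\Delta f_{\sum a_i w_i})\ast f_e$. One clean way to organize all of this, which I would present instead of the word-by-word pairing if it reads better, is: it suffices to treat $\sum a_i w_i = w_0$ a single word; then $\Delta f_{w_0 e}$ is the sum over all splittings $w_0 e = u' u''$ of $f_{u'}\otimes f_{u''}$, and such splittings are in bijection with (i) splittings $u' = w_0' $, $u'' = w_0'' e$ coming from a splitting $w_0 = w_0' w_0''$ — these are exactly the terms of $(\Delta f_{w_0})\ast f_e$ — together with (ii) the single splitting $u' = w_0 e$, $u'' = \varnothing$, which is the term $f_{w_0 e}\varotimes 1$. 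This bijection of splittings is the whole content of the lemma, and the symmetric statement follows by the same argument reading words from the left.
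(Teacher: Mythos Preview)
Your proof is correct; the bijection-of-splittings argument you give at the end is exactly the straightforward verification the paper has in mind. The paper itself omits the proof entirely, stating only that it is straightforward, so your approach is precisely what is intended.
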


\begin{slm}\label{coprod1}
The deconcatenation coproduct in the shuffle algebra $A(\Gamma)$ satisfies 
\[
\Delta f_{(e_1+e_2)^n} = 
\sum_{i+j = n} f_{(\eo+\et)^i} \varotimes f_{(\eo+\et)^j}.
\]
\end{slm}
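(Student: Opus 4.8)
The plan is to induct on $n$, applying Lemma~\ref{ruffleprod1} with the degree-one \emph{combination} $e_1+e_2$ in place of a single letter. Concatenation of associative words is bilinear, as is the right $\ast$-action of $A(\Gamma)$ on $A(\Gamma)\otimes A(\Gamma)$ recalled in \S\ref{rufprod}; hence the first identity of Lemma~\ref{ruffleprod1} holds verbatim with $e$ replaced by the length-one element $e_1+e_2$ and $f_e$ by $f_{e_1}+f_{e_2}=f_{e_1+e_2}$:
\[
\Delta\big(f_{(\sum a_i w_i)(e_1+e_2)}\big)
= \big(\Delta f_{\sum a_i w_i}\big)\ast f_{e_1+e_2} + f_{(\sum a_i w_i)(e_1+e_2)}\varotimes 1 .
\]

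First I would dispose of the base case: for $n=0$ we have $f_{(e_1+e_2)^0}=1$ and $\Delta 1 = 1\varotimes 1$, which is the single term on the right. For the inductive step, apply the displayed identity with $\sum a_i w_i = (e_1+e_2)^n$. Since $f_v\ast f_w = f_{vw}$ and concatenation of associative words distributes over sums, the right $\ast$-action of $f_{e_1+e_2}$ sends each right tensor factor $f_{(e_1+e_2)^j}$ appearing in the inductive expression for $\Delta f_{(e_1+e_2)^n}$ to $f_{(e_1+e_2)^{j+1}}$; thus the first summand becomes $\sum_{i+j=n} f_{(e_1+e_2)^i}\varotimes f_{(e_1+e_2)^{j+1}}$, which is precisely the part of the desired sum for $n+1$ with right exponent $\ge 1$. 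The second summand $f_{(e_1+e_2)^{n+1}}\varotimes 1 = f_{(e_1+e_2)^{n+1}}\varotimes f_{(e_1+e_2)^0}$ supplies the missing right-exponent-$0$ term, and re-indexing $j'=j+1$ completes the induction.

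As an alternative that bypasses Lemma~\ref{ruffleprod1}, one can argue straight from the definition of the deconcatenation coproduct: writing $(e_1+e_2)^n$ as the sum of all $2^n$ associative words of length $n$ in $\{e_1,e_2\}$ and splitting each such word after position $i$, the prefixes and suffixes range independently over all $\{e_1,e_2\}$-words of lengths $i$ and $n-i$, each prefix--suffix pair occurring exactly once, and summing reassembles $f_{(e_1+e_2)^i}\varotimes f_{(e_1+e_2)^{n-i}}$. There is no genuine obstacle here; the only points requiring care are the bookkeeping of the left/right $\ast$-actions from \S\ref{rufprod} and the convention that $e_1+e_2$ is handled as a formal degree-one element rather than a member of $\Gamma$. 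The lemma is presumably wanted as a step toward computing $\Delta$ on the polylogarithmic generators $f_{\eot(\eo+\et)^n}$ of Lemma~\ref{ab3}.
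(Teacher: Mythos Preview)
Your proof is correct. The paper itself omits the proof entirely, stating only that ``the proofs of Lemmas \ref{ruffleprod1} and \ref{coprod1} are straightforward and we omit them''; both your inductive argument via Lemma~\ref{ruffleprod1} and your direct combinatorial alternative are valid ways to fill in this omission.
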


The proofs of Lemmas \ref{ruffleprod1} and \ref{coprod1} are straightforward and we omit them. 

\subsection{}\label{ab3e}
Let $A''$ be the subalgebra of $A^{\PL}$ generated by the elements \ref{ab3}(*). Then $A''$ is closed under coproduct and is a Hopf subalgebra of $A^{\PL}$.
Combining lemmas \ref{ruffleprod1} and \ref{coprod1}, we have
\begin{align*}
\Delta f_{\eot(\eo+\et)^n} 
	&= f_{\eot} \ast \Delta f_{(\eo+\et)^n} + 1 \otimes f_{\eot(\eo+\et)^n} 
	\\
	&= f_{\eot} \ast 
	\sum_{i+j = n} f_{(\eo+\et)^i} \varotimes f_{(\eo+\et)^j}
	+ 1 \otimes f_{\eot(\eo+\et)^n} 
	\\
	&= \sum_{i+j = n} f_{\eot(\eo+\et)^i} \varotimes f_{(\eo+\et)^j}
	+ 1 \otimes f_{\eot(\eo+\et)^n} \in A'' \otimes A''.
\end{align*}
It follows that
\[
\phi: \Spec A^{\PL} \to \Spec A''
\]
is a surjection of graded prounipotent groups. Let $J'$, $J''$ denote the respective augmentation ideals, and consider the induced map of Lie coalgebras
\[
(L^{\PL})^\lor = J'/J'^2
\xfrom{\varphi}
J''/J''^2.
\]
Under $\varphi$, the images of the generators \ref{ab3}(*) map to the dual basis of the Lie algebra basis \ref{liebasis}. Hence it must be an isomorphism. Hence $A'' = A^{\PL}$. This completes the proof of Lemma \ref{ab3}.

\subsection{}
We record another way to finish the proof of Lemma \ref{ab3} after \S3.6, indicated by the referee. The functor corresponding to the group scheme $\pi^{\PL}$ is given by 

\[
R \mapsto \hat{L}^{\PL}_{R} 
:=
\varprojlim (L^{\PL} \otimes R) /(\Gamma^{i}L^{\PL} \otimes R)
\]
where $\Gamma^{i}L^{\PL}$ denotes the descending central series of $L^{\PL}$. By Lemma 2.3, any element $x$ of $\hat{L}^{\PL}_{R}$ can be written in a unique way as 
$$ x = a_{1}e_{1} + a_{2}e_{2} 
+ \sum_{n\geq 0} b_{n} \ad_{e_{1}}^{n}(e_{11}) 
+ \sum_{n\geq 0} c_{n} \ad_{e_{2}}^{n}(e_{22}) 
+ \sum_{n\geq 0} d_{n} \ad_{e_{1}}^{n}(e_{12}) $$
where $a_{i}, b_{i}, c_{i}, d_{i} \in R$. By \S3.6, $f_{e_{1}}$, $f_{e_{2}}$, $f_{e_{11}e_{1}^{n}}$ etc. factor through $\pi^{\PL}$ so they can be applied to this equality. This yields
$$  
a_{i} = f_{e_{i}}(x),
\text{ } 
b_{n} = (-1)^{n}f_{e_{11}e^{n}_{1}}(x),
$$
$$
c_{n} = 
(-1)^{n}
f_{e_{22}e_{2}^{n}}(x),
\text{ }
d_{n} = 
(-1)^{n}
f_{e_{12}
(e_{1}+e_{2})^{n}}(x).
$$
Thus the elements $f_{\lambda} \in \mathcal{O}(\pi^{\PL})$ form a complete set of coordinates on $\pi^{\PL}$.

\section{The geometric step}
\label{gstep}

\subsection{}\label{gs1}
Let $K(\tau, \upsilon, \si)$ denote the fraction field of the free prounipotent group on three generators $\tau, \upsilon, \si$ over $\QQ$. 
The elements
\[
f_\tau, f_\up, f_{\tau \up}, f_\si, f_{\tau\tau\up}, f_{\tau\up\up}, 
f_{\tau\si}, f_{\up\si}, f_{\tau\tau\tau\up}, f_{\tau\tau\up\up}, f_{\tau\up\up\up}
\]
(\ref{freepro}) 
are algebraically independent, and in what follows we may equivalently work over the sub-field of transcendence degree 11 generated by them. 

Letting
\[
\Gamma = \{e_1, e_2, e_{11}, e_{22}, e_{12}\}
\]
as above, we consider the set of elements of the noncommutative polynomial ring $\QQ\langle \Gamma \rangle$
\[
\Lambda_{\ge -\infty} = \bigcup_{i=1}^\infty \Lambda_{-i},
\mbox{ where }
\Lambda_1 = \Gamma,
\]
\[
\mbox{ and }
\Lambda_{-i} = \{ e_{11}e_1^{i-1}, e_{22}e_2^{i-1}, e_{12}(e_1+e_2)^{i-1}\}
\mbox{ for } i \ge 2.
\]
Since we'll be working primarily with $\Lambda_{\ge -4}$, we abbreviate
\[
\Lambda := \Lambda_{\ge -4}.
\]
We declare $\tau, \upsilon$ to have weight $-1$ and $\si$ to have weight $-3$; we declare the elements of $\Gamma$ to have weight $-1$. We consider the polynomial rings $\QQ[\{f_\la\}_\la]$ and $\QQ[\{\Phi^\rho_\la\}_{\la, \rho}]$ where $\la$ ranges over $\Lambda$ and $\rho$ ranges over the three generators $\{\tau, \upsilon, \si\}$ and is required to have weight equal to that of $\la$. Explicitly, the complete list of algebra generators of $\QQ[\{\Phi^\rho_\la\}_{\la, \rho}]$ is 
\begin{align*}
& \Phi^{\tau}_{e_1}, \Phi^{\tau}_{e_2}, \Phi^{\tau}_{e_{11}}, 
\Phi^{\tau}_{e_{22}}, \Phi^{\tau}_{e_{12}} 
\\
&\Phi^{\upsilon}_{e_1}, \Phi^{\upsilon}_{e_2}, \Phi^{\upsilon}_{e_{11}}, 
\Phi^{\upsilon}_{e_{22}}, \Phi^{\upsilon}_{e_{12}},
\\
& \Phi^\si_{e_{11}e_1e_1},
 \Phi^\si_{e_{22}e_2e_2},
  \Phi^\si_{e_{12}(e_1+e_2)^2},.
\end{align*}
 Thus, $\QQ[\{f_\la\}_\la]$ has Krull dimension $14$ whereas $\QQ[\{\Phi^\rho_\la\}_{\la, \rho}]$ has Krull dimension $13$. 
Define
\[
\theta: \Aa:=K(\tau, \upsilon, \si)[\{f_\la\}_\la] 
\to
\Ss:=K(\tau, \upsilon, \si)[\{\Phi^\rho_\la\}_{\rho,\la}]
\]
by
\begin{align*}
\theta(f_{e_1}) &= f_\tau \Phi^\tau_{e_1}+ f_\upsilon \Phi^\upsilon_{e_1} \\
\theta(f_{e_2}) &= f_\tau \Phi^\tau_{e_2}+ f_\upsilon \Phi^\upsilon_{e_2} \\
\theta(f_{e_{11}}) &= f_\tau \Phi^\tau_{e_{11}}+ f_\upsilon \Phi^\upsilon_{e_{11}} \\
\theta(f_{e_{22}}) &= f_\tau \Phi^\tau_{e_{22}}+ f_\upsilon \Phi^\upsilon_{e_{22}} \\
\theta(f_{e_{12}}) &= f_\tau \Phi^\tau_{e_{12}}+ f_\upsilon \Phi^\upsilon_{e_{12}} \\
\theta(f_{e_{11}e_1}) &= f_{\tau\tau} \Phi^\tau_{e_{11}} \Phi^\tau_{e_1} 
+ f_{\tau \upsilon}\Phi^\tau_{e_{11}} \Phi^\upsilon_{e_1}
+
 f_{\upsilon\tau} \Phi^\upsilon_{e_{11}} \Phi^\tau_{e_1} 
+ f_{\upsilon \upsilon}\Phi^\upsilon_{e_{11}} \Phi^\upsilon_{e_1}
\\
\theta(f_{e_{22}e_2}) &= \mbox{similar}
\\
\theta(f_{e_{12}(e_1+e_2)}) &= f_{\tau \tau} \Phi^\tau_{e_{12}} \Phi^\tau_{e_1+e_2} + \cdots
\\
\theta(f_{e_{11}e_1e_1}) 
&= f_{\tau\tau\tau} \Phi^\tau_{e_{11}} \Phi^\tau_{e_1} \Phi^\tau_{e_1} 
+ \cdots +
f_{\upsilon\upsilon\upsilon}
\Phi^\upsilon_{e_{11}} \Phi^\upsilon_{e_1} \Phi^\upsilon_{e_1} 
+ f_\si \Phi^\si_{e_{11}e_1e_1} 
\\
\theta(f_{e_{22}e_2e_2})  &= \mbox{similar}
\\
\theta(f_{e_{12}(e_1+e_2)^2})  &= \mbox{similar}
\\
\theta(f_{e_{11}e_1^3}) &= f_{\tau^4} \Phi^\tau_{e_{11}}(\Phi^\tau_{e_1})^3
+ \cdots + f_{\upsilon^4}\Phi^\upsilon_{e_{11}}(\Phi^\upsilon_{e_1})^3 
+f_{\si \tau} \Phi^\si_{e_{11}e_1e_1} \Phi^\tau_{e_1}
+f_{\si \upsilon} \Phi^\si_{e_{11}e_1e_1} \Phi^\upsilon_{e_1}
\\
\theta(f_{e_{22}e_2^3}) &= \mbox{similar}
\\
\theta(f_{e_{12}(e_1+e_2)^3}) &= \mbox{similar}
\end{align*} 
where
\[
\Phi^\tau_{e_1+e_2} = \Phi^\tau_{e_1}+ \Phi^\tau_{e_2}
\quad \mbox{and} \quad
\Phi^\upsilon_{e_1+e_2} = \Phi^\upsilon_{e_1}+ \Phi^\upsilon_{e_2}.
\]

\subsection{}\label{coocoo1}
Let $\pi(\tau, \upsilon, \si)$ be the graded free prounipotent group on three generators in weights $-1, -1, -3$. Let $\pi^\PL$ denote the prounipotent $\QQ$-group associated to the Lie algebra $L^\PL$ of \S\ref{1.1}. Let $A^\PL$ denote the coordinate ring of $\pi^\PL$. Let $A^\PL_{[\le n]}$ denote the subalgebra generated by elements in graded degree $\le n$. This is a Hopf-subalgebra and we let
\[
\pi^\PL_{\ge -n} = \Spec A^\PL_{[\le n]}
\]
be the associated quotient of $\pi^\PL$. Let
\[
{\bf Z}^1(\pi(\tau, \upsilon, \si), \pi^\PL_{\ge -4})^\Gm
\]
denote the functor from $\QQ$-algebras to sets sending
\[
R \mapsto 
Z^1(\pi(\tau, \upsilon, \si)_R, (\pi^\PL_{\ge -4})_R)^\Gm,
\]
the pointed set of $\Gm$-equivariant cocycles for the trivial group-action. We refer to an $R$-valued point of ${\bf Z}^1(\pi(\tau, \upsilon, \si), \pi^\PL_{\ge -4})^\Gm$ as an \emph{$R$-family of cocycles} or an \emph{$R$-cocycle} for short; we omit the repeating phrase ``for $R$ an arbitrary $\QQ$-algebra''. Let $\ev$ denote the map
\[
\tag{*}
\pi(\tau, \upsilon, \si) \times 
{\bf Z}^1(\pi(\tau, \upsilon, \si), \pi^\PL_{\ge -4})^\Gm
\to 
\pi(\tau, \upsilon, \si) \times \pi^\PL_{\ge -4}
\]
given on $R$-points by
\[
(\ga, c) \mapsto (\ga, c(\ga)).
\]
Let $\ev_K$ denote the map
\[
{\bf Z}^1(\pi(\tau, \upsilon, \si), 
\pi^\PL_{\ge -4})^\Gm_{K(\tau, \up, \si)}
\to 
(\pi^\PL_{\ge -4})_{K(\tau, \up, \si)}
\]
obtained from $\ev$ by base-change.

\begin{sprop}\label{coocoo4}
In the situation and the notation of segments \ref{gs1}-\ref{coocoo1}, there's a commuting square of functors  
\[
\xymatrix{
{\bf Z}^1(\pi(\tau, \upsilon, \si),
\pi^\PL_{\ge -4})^\Gm_{K(\tau, \up, \si)}
\ar[r]^-{\ev_K}
& 
(\pi^\PL_{\ge -4})_{K(\tau, \up, \si)}
\\
\Spec
K(\tau, \upsilon, \si)[\{\Phi^\rho_\la\}_{\rho,\la}]
\ar[r]_-{\Spec \theta} \ar[u]_-\sim
&
\Spec
 K(\tau, \upsilon, \si)[\{f_\la\}_\la]
  \ar[u]_-\sim
}
\]
in which the vertical maps are isomorphisms.
\end{sprop}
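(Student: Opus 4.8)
The plan is to verify the diagram by describing both vertical isomorphisms explicitly and then checking that $\ev_K$ intertwines them with $\Spec\theta$. First I would identify the right-hand vertical map: a $K(\tau,\up,\si)$-point of $\pi^\PL_{\ge -4}$ is, by Lemma \ref{ab3} (or rather its truncation in degrees $\le 4$), determined freely by the values of the coordinate functions $f_{e_1}, f_{e_2}, f_{e_{11}e_1^n}, f_{e_{22}e_2^n}, f_{e_{12}(e_1+e_2)^n}$ for $n \le 3$; this is exactly the list indexing $\Lambda = \Lambda_{\ge -4}$, so evaluation at these functions gives the isomorphism $(\pi^\PL_{\ge -4})_{K} \liso \Spec K[\{f_\la\}_\la]$. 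For the left-hand vertical map, I would use that a $\Gm$-equivariant cocycle $c$ on the graded free prounipotent group $\pi(\tau,\up,\si)$ is determined by its values $c(\tau), c(\up), c(\si)$, subject only to the constraint that these lie in the correct graded pieces (weights $-1,-1,-3$) of the target — here the cocycle condition is vacuous because the action is trivial and the source is free, so $Z^1$ is just the functor of triples in the appropriate graded degrees. Recording the coordinates of $c(\tau)$, $c(\up)$ in degrees $-1$ (namely $\Phi^\tau_{e_i}, \Phi^\tau_{e_{jk}}$ and their $\up$-analogues) and of $c(\si)$ in degree $-3$ (namely $\Phi^\si_{e_{11}e_1e_1}, \Phi^\si_{e_{22}e_2e_2}, \Phi^\si_{e_{12}(e_1+e_2)^2}$) yields the isomorphism with $\Spec K[\{\Phi^\rho_\la\}]$; one must check that these are the only coordinates, i.e. that $\pi^\PL$ has no degree-$1$ basis elements beyond $e_1,e_2,e_{11},e_{22},e_{12}$ and no degree-$3$ basis elements beyond the three displayed, which is immediate from Lemma \ref{liebasis}.

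Next I would compute $\ev_K$ in these coordinates. Given a cocycle $c$ with $c(\tau), c(\up) \in (\pi^\PL)_{\ge -1}$ and $c(\si)$ of weight $-3$, the value $c(\ga)$ at the distinguished generator $\ga$ of $\pi(\tau,\up,\si)$ over $K$ — where $\ga$ is the "universal" group element whose shuffle coordinates are $f_\tau, f_\up, f_{\tau\up}, \dots$ as listed in \S\ref{gs1} — is obtained by the group law in $\pi^\PL$: since a cocycle for the trivial action is just a homomorphism, $c(\ga)$ is the image of $\ga$ under the homomorphism $\pi(\tau,\up,\si) \to \pi^\PL_{\ge -4}$ sending $\tau \mapsto c(\tau)$, $\up \mapsto c(\up)$, $\si \mapsto c(\si)$. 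Dualizing, the pullback of a coordinate $f_\la$ on $\pi^\PL$ is computed by the standard formula expressing the shuffle coordinate of a product/homomorphic image in terms of deconcatenation: concretely $f_\la(c(\ga)) = \sum_w \langle$coefficient of the associative word $w$ in $\la\rangle \cdot f_w(\ga) \cdot (\text{monomial in the }\Phi\text{'s})$, where the sum runs over ways of "breaking" the word representing $\la$ across the letters $\tau,\up,\si$. Carrying this out for each $\la \in \Lambda$ — using the deconcatenation coproduct and Lemmas \ref{ruffleprod1}, \ref{coprod1} to handle the words $e_{12}(e_1+e_2)^n$ — reproduces exactly the formulas defining $\theta$ in \S\ref{gs1}; in particular the appearance of $f_\si$ (rather than any degree-$3$ word in $\tau,\up$) in $\theta(f_{e_{11}e_1e_1})$ reflects that $c(\si)$ contributes to the weight-$-3$ part of $c(\ga)$, while $f_{\si\tau}, f_{\si\up}$ in $\theta(f_{e_{11}e_1^3})$ come from the two ways of concatenating the weight-$-3$ generator with one further weight-$-1$ generator.

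The main obstacle will be the bookkeeping in this last step: one must pin down precisely which associative words $w$ in $\tau,\up,\si$ occur with which coefficients when a given $\la$ is pulled back through the homomorphism, keeping track both of the weight grading (so that e.g. no word of weight $\ne -4$ appears in $\theta(f_{e_{11}e_1^3})$) and of the combinatorics of the $(e_1+e_2)^n$ expressions. I would organize this by first treating the "$M_{0,4}$-type" generators $e_{11}e_1^n$ and $e_{22}e_2^n$, where the computation is the familiar one for $\thrpl$, and then handling $e_{12}(e_1+e_2)^n$ separately, using the binomial-type coproduct formula of Lemma \ref{coprod1} to see that the relevant coordinate on $\pi^\PL$ really does behave like a single word $e_{12}$ followed by $n$ copies of the "diagonal" letter $e_1+e_2$. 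Finally, commutativity of the square is the assertion that $\ev_K^\sharp \circ (\text{right iso})^\sharp = (\text{left iso})^\sharp \circ \theta$ on the generators $f_\la$, which is exactly what the term-by-term computation establishes; since both vertical maps have already been shown to be isomorphisms, no separate surjectivity or injectivity argument is needed.
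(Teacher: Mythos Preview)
Your proposal is correct and follows essentially the same approach as the paper: the paper packages the key computation---that $c^\sharp f_\la$ expands as $\sum_w \phi^w_\la(c) f_w$ with the coefficients $\phi^w_\la$ factoring into products of single-letter values $\Phi^\rho_\mu$---into a separate Proposition \ref{cccoeffs}, proved precisely via the coproduct formulas of Lemmas \ref{ruffleprod1} and \ref{coprod1} as you indicate, and then deduces Proposition \ref{coocoo4} by reading off that these factorizations reproduce the formulas defining $\theta$ in \S\ref{gs1}. One small caveat worth tightening: the generators $\tau,\up,\si$ are not elements of $\pi(\tau,\up,\si)$ but of its Lie algebra, so expressions like ``$c(\tau)$'' should be understood as values of $\Lie c$ on generators (the paper flags this explicitly in \S\ref{coend}); your argument goes through unchanged once this is made precise.
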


The proof of proposition \ref{coocoo4} spans segments \ref{cc4start}-\ref{cc4end}.

\subsection{}\label{cc4start}
Let $U$ be a prounipotent $\QQ$-group with  coordinate ring $A$ and let $\Uu$ be the completed universal enveloping algebra of $\Lie U$. Then there's an isomorphism between $A$ and the topological dual $\Uu^\lor$. Given $f\in A$ and $w \in \Uu$ we denote the action of the linear functional associated to $f$ on $w$ by
\[
\langle f, w \rangle.
\] 
If $\la$ is a linear combination of elements of $ \Lambda$ and $w$ is an associative word in the alphabet $\{\tau, \up, \si\}$, we define 
\[
\phi^w_\la: 
{\bf Z}^1(\pi(\tau, \upsilon, \si), 
\pi^\PL_{\ge -4})^\Gm
\to
\AA^1_\QQ
\]
by the formula
\[
\phi^w_\la(c) = \langle c^\sharp f_\la, w \rangle
\]
on $R$-valued points. Note that for any $\Gm$-equivariant $R$-cocycle $c$ and $\la \in \Lambda$,
\[
c^\sharp f_\la = \sum_w \phi^w_\la(c) f_w
\]
is homogeneous of weight equal to the weight of $\la$. Thus, $\phi^w_\la = 0$ unless $\wt(w) = \wt(\la)$. Note also that $\phi^w_\la$ is linear in the subscript, that is
\[
\phi^w_{\sum_i a_i \la_i} = \sum_i a_i \phi^w_{\la_i}.
\]

For future use, we formulate and prove the following proposition in the slightly more general setting of a free graded prounipotent group $\pi(\Si)$ on the \textit{graded set} (i.e. $\ZZ$-indexed family of sets)
\[
\Si = \Si_{-1} \cup \Si_{-2} \cup \Si_{-3} \cup \cdots
\cup \Si_{-n}
\]
with $\Si_{-1}$ an arbitrary finite set, $\Si_{i} = 0$ for $i$ even $\le -2$, $\Si_{i} = \{\si_{-i}\}$ of size one for $i$ odd $\le -3$, and with $\pi^\PL$ in place of $\pi^\PL_{\ge -4}$.

\begin{sprop}\label{cccoeffs}
We continue with the situation and the notation of segments \ref{coocoo1}, \ref{cc4start}. Let 
\[
c: \pi(\Si)_R \to \pi^\PL_R
\]
be a $\Gm$-equivariant $R$-cocycle. Then for $0 \le r \le n$, $\tau_1, \dots, \tau_r \in \Si_{-1}$ and $\si \in \Si_{r-n}$, we have
\begin{align}
\phi^{\si \tau_1 \cdots \tau_r}_{\eoo \eo^{n-1}} (c) 
&= 
\phi^\si_{\eoo \eo^{n-r-1}}(c) \phi^{\tau_1}_{\eo}(c) 
\cdots \phi^{\tau_r}_{\eo}(c)
\\
\phi^{\si \tau_1 \cdots \tau_r}_{\ett \et^{n-1}} (c) 
&= 
\phi^\si_{\ett \et^{n-r-1}}(c) \phi^{\tau_1}_{\et}(c) 
\cdots \phi^{\tau_r}_{\et}(c)
\\
\phi^{\si \tau_1 \cdots \tau_r}_{\eot (\eo+\et)^{n-1}} (c) 
&= 
\phi^\si_{\eot (\eo+\et)^{n-r-1}}(c) \phi^{\tau_1}_{\eo+\et}(c) 
\cdots \phi^{\tau_r}_{\eo+\et}(c),
\end{align}
and if $w$ is any word not occurring in the above equations then
\[
\tag{4}
\phi^{w}_\la(c) = 0.
\] 
Conversely, given arbitrary elements $a^\rho_\la \in R$ for $\rho \in \Si$ and $\la \in \Lambda_{ \ge -\infty}$ of equal half-weight, there exists one and only one $\Gm$-equivariant $R$-cocycle $c$ satisfying 
\[
\phi^\rho_\la(c) = a^\rho_\la.
\]
\end{sprop}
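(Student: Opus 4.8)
The plan is to compute the cocycle $c$ explicitly on each generator of $\pi(\Si)$ and read off the functions $f_\la$ from the structure of $A^{\PL}$. Since $c$ is $\Gm$-equivariant, $c^\sharp f_\la$ is homogeneous of weight $\wt(\la)$, so $\phi^w_\la(c)=0$ whenever $\wt(w)\neq\wt(\la)$; this immediately kills all but finitely many $w$ for each $\la$. The key input is Lemma \ref{ab3}, which tells us that $A^{\PL}$ is the polynomial algebra on the $f_{e_i}$, $f_{e_{11}e_1^n}$, $f_{e_{22}e_2^n}$, $f_{e_{12}(e_1+e_2)^n}$; equivalently, by \S3.11, the coordinates $f_\la$ for $\la\in\Lambda_{\ge-\infty}$ form a complete coordinate system on $\pi^{\PL}$. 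So an $R$-point of $\pi^{\PL}$ is determined freely by the values of these coordinates, and a $\Gm$-equivariant cocycle $c\colon\pi(\Si)_R\to\pi^{\PL}_R$ is determined by the homogeneous group elements $c(\rho)$ for $\rho$ a generator, i.e. by the numbers $\phi^\rho_\la(c)=\langle c^\sharp f_\la,\rho\rangle$ for $\la$ of weight $\wt(\rho)$.

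First I would record the cocycle condition: for $\ga_1,\ga_2\in\pi(\Si)$, $c(\ga_1\ga_2)=c(\ga_1)\cdot\ga_1 c(\ga_2)$, which for the trivial action is just $c(\ga_1\ga_2)=c(\ga_1)c(\ga_2)$, so in fact $c$ is a homomorphism; equivalently $c^\sharp$ is a Hopf algebra map dual to a Lie algebra map $\Lie c\colon \nN(\Si)_R\to\hat L^{\PL}_R$. Thus $c$ is determined by where the Lie generators go, and conversely \emph{any} assignment of the generators of $\nN(\Si)$ to elements of $\hat L^{\PL}_R$ extends uniquely to such a map (freeness of $\nN(\Si)$). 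This gives the bijectivity of $\ev_K$-type statement abstractly. To pin down the coordinates: write, using the basis of Lemma \ref{liebasis},
\[
\Lie c(\tau_i) = \phi^{\tau_i}_{e_1}(c)\,e_1 + \phi^{\tau_i}_{e_2}(c)\,e_2
\]
for $\tau_i\in\Si_{-1}$ (no depth-$\ge1$ terms can appear, since those have weight $\le-2$), and
\[
\Lie c(\si_m) = \phi^{\si_m}_{e_{11}e_1^{m-1}}(c)\,(\ad e_1)^{m-1}(e_{11})
+ \phi^{\si_m}_{e_{22}e_2^{m-1}}(c)\,(\ad e_2)^{m-1}(e_{22})
+ \phi^{\si_m}_{e_{12}(e_1+e_2)^{m-1}}(c)\,(\ad e_1)^{m-1}(e_{12})
\]
for $\si_m\in\Si_{-m}$, $m$ odd (again weight forces exactly the depth-one part in degree $m$, and Lemma \ref{liebasis} says those are the only basis elements there apart from the $e_i$, which have the wrong weight). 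Note $\langle f_{e_{11}e_1^{n-1}}, (\ad e_1)^{n-1}(e_{11})\rangle = (-1)^{n-1}$ and similarly for the others, which accounts for signs if one prefers the normalization of \S3.11; with the bracket pairing as set up here the coefficients are exactly the $\phi$'s.

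The formulas (1)--(3) then follow by a direct computation of $\langle c^\sharp f_{e_{11}e_1^{n-1}}, \si\tau_1\cdots\tau_r\rangle$: since $c^\sharp$ is an algebra map for the $\ast$-product paired against concatenation (this is exactly the content of the $\ast$-product formalism of \S\ref{rufprod}, together with Lemma \ref{ruffleprod1}), and $f_{e_{11}e_1^{n-1}}$ is a deconcatenation-primitive-ish element whose coproduct was computed in \S3.6 to be $\sum_{i+j=n-1} f_{e_{11}e_1^i}\varotimes f_{e_1^j}$, one gets
\[
\langle c^\sharp f_{e_{11}e_1^{n-1}},\,\si\tau_1\cdots\tau_r\rangle
= \langle c^\sharp f_{e_{11}e_1^{n-r-1}},\si\rangle\,
\langle c^\sharp f_{e_1},\tau_1\rangle\cdots\langle c^\sharp f_{e_1},\tau_r\rangle,
\]
because the only way to split the word $\si\tau_1\cdots\tau_r$ compatibly with weights puts $\si$ (weight $r-n$) against the $e_{11}e_1^{\bullet}$ factor and each $\tau_j$ against a single $e_1$ — and $\langle c^\sharp f_{e_1},\si\rangle = 0$ by weight, $\langle c^\sharp f_{e_1^j},\tau\rangle = 0$ for $j\ne1$ since $c^\sharp f_{e_1}$ is degree one so $c^\sharp(f_{e_1}^{\ast j})$ pairs to zero against a single letter unless $j=1$. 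The other two identities are identical with $e_1\mapsto e_2$ and $e_1\mapsto e_1+e_2$ respectively, using Lemma \ref{coprod1} for the latter. Equation (4) is the weight vanishing together with the observation that, because $c$ factors through $\pi^{\PL}$ and $f_\la$ for $\la\notin\Lambda_{\ge-\infty}$ is a polynomial in the generators, any word $w$ not of the listed concatenation shape pairs to zero — more precisely, one checks $\phi^w_{e_i}=0$ unless $w$ is a single weight $-1$ letter, $\phi^w_{e_{jk}e_\bullet^{m}}=0$ unless $w$ is $\si_{m'}$ times weight-$(-1)$ letters, which is exactly the case analysis above.

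For the converse, given $a^\rho_\la\in R$ of matching weight, define $\Lie c$ on generators by the two displayed formulas (with $a$'s in place of $\phi(c)$'s), extend freely to $\nN(\Si)_R\to\hat L^{\PL}_R$, dualize to get $c$; $\Gm$-equivariance holds because each generator is sent to a homogeneous element of the correct degree. Uniqueness is clear since $c$ is determined by $\Lie c$ on generators and, by Lemma \ref{liebasis} plus the weight constraint, the coefficients of $\Lie c(\rho)$ in the chosen basis are forced to be the prescribed $a^\rho_\la$. Finally, to match the proposition's statement precisely one observes that the $\phi^\rho_\la$ for $\rho\in\Si$, $\la\in\Lambda_{\ge-\infty}$ of equal weight are exactly these basis coefficients, so $c\mapsto(\phi^\rho_\la(c))$ is a bijection onto all of $R^{(\text{index set})}$ — which, in the specific case $\Si=\{\tau,\up,\si\}$ and truncation at weight $-4$, is the isomorphism $\Spec K(\tau,\up,\si)[\{\Phi^\rho_\la\}]\iso {\bf Z}^1(\ldots)^\Gm_K$ asserted in Proposition \ref{coocoo4}.

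The main obstacle, to my mind, is not any single step but keeping the bookkeeping honest: one must be careful that ``cocycle for the trivial action'' really does reduce to ``homomorphism'', that the $\ast$-product against deconcatenation genuinely makes $c^\sharp$ multiplicative in the needed sense, and — most delicately — that the coproduct formulas of \S3.6 are applied to the \emph{right} elements (the $f_{e_{jk}e_\bullet^n}$, not arbitrary $f_w$), since it is precisely the special shape of the polylogarithmic generators that makes the right-hand sides of (1)--(3) factor as products rather than sums. The weight grading does all the heavy lifting in forcing the vanishing (4) and the uniqueness, so once the grading argument is set up cleanly the rest is a finite, if slightly tedious, unwinding of pairings.
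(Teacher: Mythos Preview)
Your proposal is correct and follows essentially the same strategy as the paper's proof: both hinge on the fact that a cocycle for the trivial action is a homomorphism, so $c^\sharp$ respects the deconcatenation coproduct, and both extract the formulas (1)--(3) and the vanishing (4) by feeding the coproduct formulas of Lemmas \ref{ruffleprod1} and \ref{coprod1} through this compatibility. The converse clause is likewise handled the same way in both accounts (define $\Lie c$ on free generators using the basis of Lemma \ref{liebasis}).

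There is one presentational difference worth noting. The paper applies $\Delta$ \emph{once} to the tautological expansion $c^\sharp f_\la = \sum_w \phi^w_\la(c)f_w$, compares coefficients of $f_v\otimes f_\tau$ and $f_v\otimes f_\si$ to obtain a one-step recursion
\[
\phi^{v\tau}_\la(c) = \phi^v_{\la'}(c)\,\phi^\tau_{e_\bullet}(c),
\qquad
\phi^{v\si}_\la(c) = 0 \ (\si\in\Si_{<-1}),
\]
and then iterates. You instead pair against the full word $\si\tau_1\cdots\tau_r$ at once via the iterated coproduct, and use weight to force a unique decomposition. Both are the same computation; your packaging is arguably cleaner.

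One small imprecision: the phrase ``$c^\sharp$ is an algebra map for the $\ast$-product'' is not right --- $c^\sharp$ is a map of \emph{shuffle} Hopf algebras, hence a \emph{coalgebra} map for deconcatenation, which is what you actually use when you write $\langle c^\sharp f, \si\tau_1\cdots\tau_r\rangle = \langle (c^\sharp)^{\otimes(r+1)}\Delta^{(r)}f,\si\otimes\tau_1\otimes\cdots\otimes\tau_r\rangle$. The computation that follows is nonetheless correct.
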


Proposition \ref{cccoeffs} and its proof are similar to Proposition 3.10 of \cite{PolGonI}. The proof spans segments \ref{costart}-\ref{coend}. 

\subsection{}\label{costart}
We begin with a formal calculation, in which $\Si_{-1}$ may be an arbitrary finite set, and $\{a^\tau\}_{\tau \in \Si_{-1}}$ a family of commuting coefficients. In this abstract setting, we claim that
\[
\left( \sum_{\tau \in \Si_{-1}} a^\tau f_\tau \right) ^{\Sha n}
=
n! \sum_{\tau_1, \dots, \tau_n \in \Si_{-1}}
 a^{\tau_1} \cdots a^{\tau_n} f_{\tau_1 \cdots \tau_n}.
\]
Indeed, the left side of the equation
\begin{align*}
	&= \sum_{\tau_1, \dots, \tau_n} (a^{\tau_1}f_{\tau_1}) \mbox{\tiny{$\sha$}}
		\cdots \mbox{\tiny{$\sha$}} (a^{\tau_n}f_{\tau_n}) \\
	&= \sum_{\tau_1, \dots, \tau_n} a^{\tau_1}\cdots a^{\tau_n}
		\left( 
			\underset{\mbox{\tiny {of} }\tau_1, \dots, \tau_n}
				{\sum_{\mbox{\tiny{permutations} }p}}
			f_{\tau_1^p \cdots \tau_n^p}
		\right) \\
	&= \sum_p 
		\underbrace{
		\sum_{\tau_1, \dots, \tau_n} a^{\tau_1}
		\cdots a^{\tau_n} f_{\tau_1^p \cdots \tau_n^p}
			}_{\mbox{\tiny{independent of} }p}, \\
\end{align*}
which equals the right side of the equation.

\subsection{}\label{cc6}

Returning to our concrete situation, we focus on equation \ref{cccoeffs}(3) and, simultaneously, on the case $\la = \eot(\eo+\et)^{n-1}$ of equation \ref{cccoeffs}(4). We have, tautologically
\[
\tag{A}
c^\sharp f_{\eot(\eo+\et)^n} = 
\sum_{\set{w}{\wt w = n+1}}
\phi^w_{\eot(\eo+\et)^n}(c)f_w.
\]
We wish to compute the coproduct $\Delta$ of both sides, remembering that $c^\sharp$, since it corresponds to a cocycle for the trivial group action, preserves the coproduct. On the right side, we have 
\[
\Delta({\rm RHS}) = \sum_{\set{w', w''}{\wt(w')+\wt(w'') = n+1}}
\phi^{w'w''}_{\eot(\eo+\et)^n}(c)f_{w'} \otimes f_{w''}.
\]
On the left we have
\begin{align*}
\Delta({\rm LHS}) =
(c^\sharp \otimes c^\sharp) \left(
\sum_{i+j = n} f_{\eot(\eo+\et)^i }
\otimes \frac{f_{\eo+\et}^{\Sha j}}{j!}
+ 1 \otimes f_{\eot(\eo+\et)^n}
\right)
\intertext{by Lemmas \ref{ruffleprod1} and \ref{coprod1},}
\\
= \sum_{i+j = n}
\left(
\sum_u \phi^u_{\eot(\eo+\et)^i}(c)f_u
\right) 
\otimes 
\frac{
\left( 
\sum_{\tau \in \Si_{-1}}
\phi^\tau_{\eo+\et}(c)f_\tau
\right)^{\Sha j}
}{j!}
\\
+ 1 \otimes \sum_v \phi^v_{\eot(\eo+\et)^n}(c) f_v
\\
= \underset{\tau_1, \dots, \tau_j} {\underset{u} {\underset{i+j = n} \sum} }
\phi^u_{\eot(\eo+\et)^i}(c)
\phi_{\eo+\et}^{\tau_1}(c) \cdots \phi_{\eo+\et}^{\tau_j}(c)
f_u \otimes f_{\tau_1 \cdots \tau_j}
\\
+ \sum_v \phi^v_{\eot(\eo+\et)^n}(c) 1\otimes f_v
\end{align*}
by \S\ref{costart}. Taking the coefficient of $f_v \otimes f_\tau$ with $\tau \in \Si_{-1}$ and $v$ an arbitrary word of length $n \ge 1$, we obtain
\[
\tag{B}
\phi
^{v\tau}
_{e_{12}(e_1+e_2)^n}
(c)
=
\phi^v
_{e_{12}(e_1+e_2)^{n-1}}
(c)
\phi^\tau_{e_1+e_2}(c)
\]
while taking the coefficient of $f_v \otimes f_\si$ with $\si\in \Si_i$ for $i<-1$ and $v$ an arbitrary word of length $n+1-i \ge 1$, we obtain
\[
\tag{C}
\phi^{v\si}
_{e_{12}(e_1+e_2)^n}
(c)=0.
\]
Equations \ref{cccoeffs}(3) and (4)($\la = \eot(\eo+\et)^{n-1}$) follow.

\subsection{}\label{coend}
We turn to the second clause of the proposition. Equations \ref{cc6}(B) and (C) show explicitly how all coefficients of $c^\sharp f_\la$ in equation \ref{cc6}(A) are determined by the values $\phi^\rho_\la$ for $\rho \in \Si$ an individual letter. Alternatively, this is just an immediate consequence of the fact that the elements $f_\la$ for $\la \in \Lambda_{\ge -\infty}$ form an algebra basis of $\pi^\PL$ on the one hand, and that the elements $\rho \in \Si$ form a free set of generators for the prounipotent group $\pi(\Si)$ on the other. The fact that the generators are not elements of the group may cause some confusion, so we take the time to spell out the existence: given elements $a^\rho_\la \in R$ as in the proposition, we define
\[
c: \pi(\Si)_R \to \pi^\PL_R
\]
to be the unique homomorphism
such that 
\[
\langle
(\Lie c)(\rho), f_\la
\rangle
= a^\rho_\la
\]
whenever the half-weights of $\rho$ and $\la$ are equal, and $=0$ otherwise. Here, as usual, we regard $(\Lie c)(\rho)$ as belonging to the universal enveloping algebra of $\pi^\PL$, and the bracket refers to the natural pairing between the coordinate ring and the universal enveloping algebra. This completes the proof of Proposition \ref{cccoeffs}.

\subsection{}\label{cc4end}

We now prove Proposition  \ref{coocoo4}. The second clause of Proposition \ref{cccoeffs} says that the map
\[
\Spec \QQ
[\{\Phi^\rho_\la\}_{\rho, \la}]
\from
{\bf Z}^1(\pi(\tau, \upsilon, \si),
\pi^\PL_{\ge -4})^\Gm
\]
induced by the functions 
\[
\AA^1_\QQ
\xfrom{\Phi^\rho_\la}
{\bf Z}^1(\pi(\tau, \upsilon, \si),
\pi^\PL_{\ge -4})^\Gm
\]
is an isomorphism of $\QQ$-schemes. Lemma \ref{ab3} implies that the map
\[
\Spec \QQ[\{ f_\la\}_{\wt \la \ge -4} ]
\from
\pi^\PL_{\ge -4}
\]
induced by the functions $f_\la$ is also an isomorphism of $\QQ$-schemes. 

Let $A(\tau, \upsilon, \si)$ denote the coordinate ring of $\pi(\tau, \upsilon, \si)$. In terms of the functions $\Phi^\rho_\la$ on ${\bf Z}^1(\pi(\tau, \upsilon, \si),
\pi^\PL_{\ge -4})^\Gm$, the functions $f_\la$ on $\pi^\PL_{\ge -4}$ and the functions $f_w$ on $\pi(\tau, \upsilon, \si)$, the universal cocycle evaluation map
$\ev$ of \S\ref{coocoo1}(*) is computed as follows. Since the map $\ev$ commutes with the first projection, we have
\[
\ev^\sharp(f_w) = f_w
\]
for all words $w$ in the generators $\{\tau, \upsilon, \si\}$. 
Let $Q$ be the set of words which occur in equations 1-3 of Proposition \ref{cccoeffs} and let $Q_n \subset Q$ denote the words of half-weight $n$. We use the same equations to define new functions $\Phi^w_\la$ on the space of equivariant cocycles (replace each lower-case $\phi$ by an upper-case $\Phi$). We then have for $(\ga, c)$ an $R$-valued point of 
\[
\pi(\tau, \upsilon, \sigma)
\times
{\bf Z}^1
(\pi(\tau, \upsilon, \sigma),
\pi_{\ge -4}^\PL
)^\Gm
\]
and $\la \in \Lambda_n$, 
\begin{align*}
    (\ev^\sharp f_\la)(\ga, c)
    &= (c^\sharp f_\la) (\ga) 
    \\
    &= \Big(\sum_{\wt w = n}
    \phi^w_\la(c) f_w \Big) (\ga)
    \\
    &= \Big( \sum_{w \in Q_n}
    f_w \Phi^w_\la \Big)(\ga, c)
\end{align*}
where the last equation holds by Proposition \ref{cccoeffs}. Hence we have
\[
\ev^\sharp f_\la
=
\sum_{w \in Q_n} f_w \Phi^w_\la.
\]
These equations are written out explicitly in \S\ref{gs1}. 
This completes the proof of Proposition \ref{coocoo4}.


We now turn to the problem of constructing a nonzero element in the kernel of $\theta$.

\subsection{}
\label{6s1}
Consider the following element of $\Aa[X,Y]$: 
\begin{multline*}
Q_1(X,Y) = - \Big[ (f_{\tau\upsilon} f_{\upsilon} - f_{\tau} f_{\upsilon\upsilon}) X 
+ ( f_{\tau\tau}f_{\upsilon} - f_{\tau} f_{\upsilon\upsilon}) Y\Big] f_{e_{11}e_{1}^{3}} +
\\ 
\Big[f_{\upsilon} f_{e_{11}e_{1}} - (f_{\upsilon\tau} Y + f_{\upsilon\upsilon}X) f_{e_{11}} \Big](f_{\tau^{4}} Y^{3} + f_{\tau\upsilon^{3}} X^{3}
+ (f_{\tau(\tau^{2}\sh \upsilon)}) XY^{2} 
+ (f_{\tau(\upsilon^{2}\sh \tau)}) YX^{2})
\\ 
+ 
\Big[-f_{\tau} f_{e_{11}e_{1}} + (f_{\tau\upsilon} X + f_{\tau\tau}Y) f_{e_{11}}\Big]
(f_{\upsilon\tau^{3}} Y^{3} + f_{\upsilon^{4}} X^{3}
+ (f_{\upsilon(\tau^{2}\sh \upsilon)}) XY^{2} 
+ (f_{\upsilon(\upsilon^{2}\sh \tau)}) YX^{2})
+
\\
(f_{\sigma\tau} Y +  f_{\sigma\upsilon} X)
\frac{1}{f_{\sigma}} \Big[ f_{e_{11}e_{1}^{2}} - 
\Big[f_{\upsilon} f_{e_{11}e_{1}} - (f_{\upsilon\tau}Y + f_{\upsilon\upsilon}X) f_{e_{11}}  \Big]\
(f_{\tau\tau\tau}Y^{2} + f_{\tau\upsilon\upsilon}X^{2} 
+ f_{\tau(\tau\sh\upsilon)}XY)
\\
+ \Big[-f_{\tau}f_{e_{11}e_{1}} + (f_{\tau\upsilon} X + f_{\tau\tau}Y) f_{e_{11}}\Big]
(f_{\upsilon\tau\tau}Y^{2} + f_{\upsilon\upsilon\upsilon}X^{2} 
+ f_{\upsilon(\tau\sh\upsilon)}XY) \Big]
\end{multline*}
and let $a_{i,j}$ denote the coefficient of the monomial $X^iY^j$.
We use the elements $a_{i,j} \in \Aa$ to construct a collection of elements of $\Aa[Y]$: we define 
\[
A_{1}(Y) =  - (f_{\tau\upsilon} - \frac{1}{2} f_{\tau}f_{\upsilon}) \Big[  f_{\upsilon\upsilon}f_{e_{11}} \Big]
\]
\begin{multline*} B_{1}(Y) = (f_{\tau\upsilon} - \frac{1}{2} f_{\tau}f_{\upsilon}) \Big[ f_{\upsilon} f_{e_{11}e_{1}} - f_{\upsilon\tau}Y f_{e_{11}} \Big]
\\ + (f_{\upsilon\tau} - \frac{1}{2} f_{\upsilon}f_{\tau}) \Big[
f_{\tau\upsilon} f_{e_{11}} \Big] Y - 
\Big[ (f_{\tau\upsilon} f_{\upsilon} - f_{\tau} f_{\upsilon\upsilon}) \Big] \Big[ f_{e_{11}e_{1}} - \frac{1}{2} f_{e_{11}} f_{e_{1}} \Big] 
\end{multline*}
\[
C_{1}(Y) = (f_{\upsilon\tau} - \frac{1}{2} f_{\upsilon}f_{\tau}) \Big[  -f_{\tau}f_{e_{11}e_{1}} + f_{\tau\tau} Y f_{e_{11}} \Big] Y - ( f_{\tau\tau}f_{\upsilon} - f_{\tau} f_{\upsilon\upsilon}) Y
\Big[ f_{e_{11}e_{1}} - \frac{1}{2} f_{e_{11}}f_{e_{1}} \Big]
\]
\[
{Q}_{1,a}(Y) = \sum_{i=0}^{4}\sum_{j=0}^{4} a_{ij} (2A_1)^{4-i} Y^{j} \sum_{k'=0}^{\lfloor \frac{i}{2} \rfloor} {i \choose 2k'}  (B_1^{2}  - 4A_1C_1)^{k'} (-B_1)^{i-2k'}
\]
\[
{Q}_{1,b}(Y) = \sum_{i=0}^{4}\sum_{j=0}^{4} a_{ij} (2A_1)^{4-i} Y^{j}  \sum_{k'=0}^{\lfloor \frac{i-1}{2} \rfloor } {i \choose 2k'+1}  (B_1^{2} - 4A_1C_1)^{k'} (-B_1)^{i-2k'-1}
\]
\begin{equation}
P_{1}(Y) = {Q}_{1,a}^{2} - (B_1^{2} - 4A_1C_1) {Q}_{1,b}^{2}.
\end{equation}
This last polynomial $P_1(Y)$ is divisible by a power of $Y$ and we let $p_1(Y)$ be the result of dividing by this factor. Polynomials $p_{2}(Y), p_{3}(Y) \in \Aa[Y]$ are defined similarly with $(e_{11},e_{1})$ replaced respectively by $(e_{22},e_{2})$ and $(e_{12},e_{1}+e_{2})$. Finally, we define $F \in \Aa$ to be the double-resultant
\[
\tag{5}
F = Res_Y\Big( Res_X \big( p_{1}(X), p_{2}(Y-X)  \big), p_{3}(Y) \Big).
\]

\sProposition{6s2}{
The element $F$ of $\Aa$ defined above is a nonzero element of the kernel of $\theta: \Aa \to \Ss$.
}

\begin{proof}
Needless to say, given a sufficiently powerful computer this could be easily checked via direct computation. Indeed, we do rely on computer verification for the claim that $F \neq 0$ \cite{M05code}. We nevertheless give a somewhat more nuanced account of the second claim, highlighting those steps in the construction which require something more than direct manipulation. 

We denote by $Q_1^\theta(X,Y)$ the image of $Q_1(X,Y)$ in $\Ss[X,Y]$, and similarly for $A_1^\theta(Y), B_1^\theta(Y), C_1^\theta(Y)$. Direct manipulation shows on the one hand that
\[
\tag{Q}
Q_1^\theta(\Phi^\up_{e_1}, \Phi^\tau_{e_1}) = 0,
\]
and on the other hand that 
\[
\tag{ABC}
A^\theta_1(\Phi^\tau_{e_1}) \cdot
 (\Phi^\up_{e_1})^2 + B^\theta_1(\Phi^\tau_{e_1}) \cdot \Phi^v_{e_1} + C^\theta_1(\Phi^\tau_{e_1}) = 0.
\]
Let
\[
\Delta_1(Y) = B_1(Y)^2 - 4 A_1(Y)C_1(Y),
\]
denote by $\Aa[Y,\de_1]$ the $\Aa[Y]$-algebra
\[
\Aa[Y,\de_1] := \Aa[Y, t]/(t^2 - \Delta_1(Y))
\]
and denote by $\de_1$ the equivalence class of $t$; denote by $\Ss[Y,\de_1^\theta]$ the $\Ss[Y]$-algebra
\[
\Ss[Y,\de^\theta_1] := \Ss[Y, u]/(u^2 - \Delta^\theta_1(Y)),
\]
and let $\de_1^\theta$ denote the equivalence class of $u$.   We denote the induced homomorphism 
\[
\Ss[Y, \de_1^\theta] \from 
\Aa[Y,\de_1]
\]
as well as the homomorphism of localizations
\[
\Ss[Y, \de_1^\theta, {A_1^\theta}^{-1}] \from 
\Aa[Y,\de_1, A_1^{-1}]
\]
simply by $\theta$. We sometimes write `$\de_1(Y)$' in place of `$\de_1$' in order to emphasize that it's contained in an algebra over a polynomial algebra in $Y$ and can be specialized to particular values of $Y$ in any $\Aa$-algebra. We then denote by $\de_1^\theta(?)$ the specialization of $\de_1^\theta$ at $Y = ?$. Similarly, we sometimes write `$\de_1^\theta(Y)$' and `$\de_1^\theta(?)$'. In this notation, we have the equation
\[
\left( 
\Phi^\up_{e_{1}} -  
\frac{-B^\theta_1(\Phi^\tau_{e_1}) + \de_1^\theta(\Phi^\tau_{e_1})}{2 A_1^\theta(\Phi^\tau_{e_1})}
\right)
\left( 
\Phi^\up_{e_{1}} -  
\frac{-B_1^\theta(\Phi^\tau_{e_1}) - \de_1^\theta(\Phi^\tau_{e_1})}{2 A_1^\theta(\Phi^\tau_{e_1})}
\right) = 0
\]
in the ring
\[
\Ss[\delta_1^\theta(\Phi^\tau_{e_1}), {A_1^\theta}(\Phi^\tau_{e_1})\inv]
=
\Ss_{A_1^\theta(\Phi^\tau_{e_1})}[s]/(s^2 - \Delta^\theta_1(\Phi^\tau_{e_1}))
\]
where the subscript denotes localization and $\de_1^\theta(\Phi^\tau_{e_1})$ corresponds to $s$.

This ring is integral. This follows from the following general fact. If $R$ is an integral domain with function field $K$ and $f \in R[x]$ is monic and irreducible over $K$, then $K[t]/(f)$ is again integral. Indeed,
\[
R[t]/(f) \from R
\]
is flat, so $K[t]/(f) \from R[t]/(f)$ is obtained from an injective map via flat base-change. Since $K[t]/(f)$ is a field, it follows that $R[t]/(f)$ is integral.

Consequently, we have 
\[
\Phi^\up_{e_{1}} 
=
\frac{-B^\theta_1(\Phi^\tau_{e_1}) + \ep \de_1^\theta(\Phi^\tau_{e_1})}
{2 A_1^\theta(\Phi^\tau_{e_1})}
\]
in the ring
\[
\Ss[\delta_1^\theta(\Phi^\tau_{e_1}), {A_1^\theta}(\Phi^\tau_{e_1})\inv]
\]
for some $\ep \in \{1, -1\}$.

Direct calculation in the ring $\Aa[Y,\de_1(Y), A_1(Y)\inv]$ shows that we have
\[
Q_1 \left(
\frac{-B_1(Y) +\ep \de_1(Y)}{2 A_1(Y)},
Y \right)
=
\frac{Q_{1,a}(Y) + \ep \de_1(Y) Q_{1,b}(Y)}
{(2A_1(Y))^4}.
\]
It follows that in $\Ss[A^\theta_1(\Phi^\tau_{e_1})\inv]$ we have 
\begin{align*}
P_1^\theta(\Phi^\tau_{e_1}) 
= \big(
Q_{1,a}^\theta(\Phi^\tau_{e_1}) 
- \ep \de_1^\theta(\Phi^\tau_{e_1}) Q_{1,b}^\theta(\Phi^\tau_{e_1}) 
\big)
\\
\cdot \big(
Q_{1,a}^\theta(\Phi^\tau_{e_1}) + \ep \de_1^\theta(\Phi^\tau_{e_1}) Q_{1,b}^\theta(\Phi^\tau_{e_1}) 
\big)
\\
=
(2A_1^\theta(\Phi^\tau_{e_1}))^4
\big(
Q_{1,a}^\theta(\Phi^\tau_{e_1}) - \ep \de_1^\theta(\Phi^\tau_{e_1}) Q_{1,b}^\theta(\Phi^\tau_{e_1}) 
\big)
\\
\cdot Q_1^\theta \left(
\frac{-B_1(\Phi^\tau_{e_1}) +\ep \de_1(\Phi^\tau_{e_1})}{2 A_1(\Phi^\tau_{e_1})},
\Phi^\tau_{e_1} \right)
\\
= 
(2A_1^\theta(\Phi^\tau_{e_1}))^4
\big(
Q_{1,a}^\theta(\Phi^\tau_{e_1}) - \ep \de_1^\theta(\Phi^\tau_{e_1}) Q_{1,b}^\theta(\Phi^\tau_{e_1}) 
\big)
\\
\cdot Q^\theta_1 (\Phi^\up_{e_1},\Phi^\tau_{e_1})
\\
=0.
\end{align*}
Since the localization map
$
\Ss \to \Ss[A^\theta_1(\Phi^\tau_{e_1})\inv]
$
is injective, it follows that
\[
P_1^\theta(\Phi^\tau_{e_1}) = 0
\]
already in $\Ss$. Since $\Ss$ is integral, it follows also that 
\[
p_1^\theta(\Phi^\tau_{e_1}) = 0
\]
Similarly, we have
$
p_2^\theta(\Phi^\tau_{e_2})=0,
$
and
\[
p_3^\theta(\Phi^\tau_{e_1} + \Phi^\tau_{e_2})=0.
\]
Consequently, 
$
\theta(F) = 0,
$
as claimed. 
\end{proof}

\section{The arithmetic step}
\label{sobsection}

For $Z$ an open subscheme of $\Spec(\mathbb{Z})$, we let
\[
\pi_1^\m{MT}(Z)^\times = \Gm \ltimes \pi_1^\un(Z)^\times
\]
denote the \emph{lexicographic} mixed Tate Galois group of $Z$. We will focus primarily on our special case $Z= \Spec(\mathbb{Z}[1/6])$, yet along the way will have occasion to make statements that hold equally for arbitrary $Z$. We also let $X = \thrpl$ and we let $\pi_1^\un(X, \vec{1_0})^\times$ denote the \emph{lexicographic} unipotent fundamental group of $X$ at the tangent vector $1$ based at the missing point $0$ \cite[\S4]{DelGon}. We let $\pi^\PL(X)^\times$ denote its polylogarithmic quotient.  We let $\Uu(Z)^\times$ denote the completed universal enveloping algebra of $\pi_1^\un(Z)^\times$. In this section we recall from \cite{PolGonI} the construction of generators 
$\tau_2, \tau_3, \si \in \Uu(\ZZ[1/6])^\times$ 
and write the ensuing shuffle coordinates as polynomials in unipotent motivic $n$-logarithms. We find it helpful to have several different notations available: we denote the generators of half-weight $-1$ by $\tau_2, \tau_3$ when we wish to emphasize the associated primes, by $\tau, \up$ when we wish instead to lighten the notation, and simply by $2,3$ when we wish to lighten notation while nevertheless emphasizing the associated primes (especially when words in the generators occur as subscripts). 

\subsection{}
\label{lidef}
Recall that the de Rham realization $\pi_1^\un(X, 1_0)^{\times,\dR}$ of the unipotent fundamental group of $X$ is free prounipotent on two generators, determined by the choice of 1-forms
\[
\frac{dt}{t}, \quad \frac{dt}{1-t}
\]
which define a basis of $H_\dR^1(X_\QQ)$. We here denote the corresponding generators by 
\[
d_0, \quad d_1.
\]
Moreover, the torsor $\pi_1^\un(X, 1_0, a)^{\times, \dR}$ of unipotent de Rham paths from $1_0$ to $a$ is canonically trivialized by a special ``de Rham'' path which we denote by $p^\dR$. In our lexicographic ordering, the motivic polylogarithm $\Liu_n(a)$ for $a \in X(Z)$ and $n \ge 1$ is defined to be the composite 
\[
\pi_1^\un(Z)^\times
 \xto{o(p^\dR)} 
 \pi_1^\un(X, 1_0, a)^\times 
\xto{f_{d_0^{n-1}d_1}}
\AA^1_\QQ
\]
where $o(p^\dR)$ denotes the orbit map
\[
\gamma \mapsto \gamma p^\dR.
\]
The motivic logarithm $\log^{\frak{u}}(a)$ is defined similarly with $f_{d_{0}}$ in place of $f_{d_0^{n-1}d_1}$.
 
\subsection{}\label{cob1}
We let $A(Z)^\times = \Oo(\pi_1^\un(Z)^\times)$. Recall that there's a canonical isomorphism of $\QQ$-vector spaces
\[
A(Z)^\times_n = \Uu(Z)^{\times,\lor}_{-n}.
\]
Recall that
$
A(Z)^\times_1 
$
has basis $\log^\uU(q)$ for $q \notin Z$. From now on we take $Z = \Spec \ZZ[1/6]$. In this case a basis of $A(Z)^\times_1$ is given by $\logu(2), \logu(3)$. We define $\tau_q \in \Uu(Z)^\times_{-1}$ to be the dual of $\logu(q)$ with respect to this basis. 

\subsection{}\label{cob2}
We let $E(Z)_n \subset A(Z)^\times_n$ denote the space of extensions and we let $D(Z)_n \subset A(Z)^\times_n$ denote the space of decomposables. According to Proposition 4.7 of \cite{PolGonI}, the elements $\Liu_3(-2)$, $\Liu_3(3)$ span a subspace $P(Z)_3$ of $A(\ZZ[1/6])^\times_3$ complementary to $E(Z)_3+D(Z)_3$. Based on this arbitrary choice, we let $\si \in \Uu(\ZZ[1/6])^\times_{-3}$ be the unique element which pairs with $P(Z)_3+D(Z)_3$ to zero and pairs with $\zeu(3)$ to $1$. 

\subsection{}\label{cob3}
The $\QQ$-vector space $\Uu(\ZZ[1/6])^\times_{-i}$ for $i = 1,2,3,4$ has a vector space basis consisting of associative words of half-weight $-i$ in the generators $\{\tau_2, \tau_3, \si\}$. If $w$ is such a word, we denote by $f_w \in A(\ZZ[1/6])^\times_i$ (as above) the function dual to $w$ with respect to this basis. The choice of ordering $\tau_2 < \tau_3 < \si$ gives rise to a set of Lyndon words whose duals
\begin{align*}
& f_{\tau_{2}}, f_{\tau_{3}} \\
& f_{\tau_{2} \tau_{3}} \\
& f_\si, f_{\tau_{2} \tau_{2} \tau_{3}}, f_{\tau_{2} \tau_{3}\tau_{3}} \\
& f_{\tau_{2} \si}, f_{\up \si}, f_{\tau_{2} \tau_{2}\tau_{2} \tau_{3}},
	 f_{\tau_{2}\tau_{2}\tau_{3}\tau_{3}}, f_{\tau_{2}\tau_{3}\tau_{3}\tau_{3}}
\end{align*}
form an algebra basis of the subalgebra $A(Z)^\times_{[\le 4]}$ of $A(Z)^\times$ generated by elements of degree $\le 4$. We refer to these as \emph{shuffle coordinates} on $\pi_1^\un(Z)^\times_{\ge -4}$.

\subsection{}\label{}
Set
\begin{align*}
\Ee_1 = \{\logu (2), \logu (3)\} && \Pp_1 = \emptyset \\
\Ee_2 = \emptyset && \Pp_2 = \{ \Liu_2 (-2) \} \\
\Ee_3 = \{\zeu(3)\} && \Pp_3 = \{\Liu_3(-2), \Liu_3(3) \} \\
\Ee = \bigcup_{i = 1}^3 \Ee_i && \Pp = \bigcup_{i = 1}^3 \Pp_i.
\end{align*}
Then $\Ee \cup \Pp$ forms a second algebra basis of $A(Z)^\times_{[\le 3]}$. We refer to its elements as \emph{polylogarithmic coordinates}.

\subsection{Remarks concerning functoriality}\label{cob4}
We wish to import computations carried out for $A(\Spec \ZZ[1/2])^\times$ to $A(\Spec \ZZ[1/6])^\times$. For this purpose, we temporarily allow $Z$ to vary among the open subschemes of $\Spec \ZZ$. The structures discussed above ($\pi_1^\un(Z)^\times$, $\Uu(Z)^\times_{-n}$, $A(Z)^\times_{n}$, $E(Z)_n$, $D(Z)_n$) are functorial with respect to $Z$. An inclusion $\iota: Z' \subset Z$ of open subschemes of $\Spec \ZZ$ (corresponding to an inclusion of finite sets of primes $S' \supset S$) gives rise to a surjection
\[
\tag{*}
\iota_*:\pi_1^\un(Z')^\times \surj \pi_1^\un(Z)^\times
\]
and an injection
\[
\iota^\sharp: A(Z')^\ti \supset A(Z)^\ti.
\]
In terms of any choice of homogeneous free generators of $\pi_1^\un(Z')^\ti$, (*) corresponds to the quotient by the normal subgroup generated by $\tau_q$ for $q \in Z \setminus Z'$, and so, $A(Z)^\ti$ is the corresponding shuffle subalgebra. In particular, a set of generators $\Si'$ for $\pi_1^\un(Z')^\ti$ gives rise to a set of generators $\Si$ for $\pi^\un_1(Z)^\ti$. If $\rho$ is a generator such that $\iota_*\rho \neq 0$, we denote $\iota_*\rho$ again by $\rho$. With this notational convention, $\Si$ is obtained from $\Si'$ simply by removing the generators $\tau_q$, $q\in Z\setminus Z'$, and $\iota^\sharp(f_w)$ (for $w$ any word in the generators $\Si$) is equal to $f_w$.

\subsection{}\label{cob5}
We return to the case $Z = \Spec \ZZ[1/6]$. In view of the remarks concerning functoriality (\ref{cob4}), the generators $\tau_2, \si$ of $\pi_1^\un(\ZZ[1/6])^\ti$ may be viewed as generators also of $\pi_1^\un(\ZZ[1/2])^\ti$. The $\QQ$-vector space $A(\ZZ[1/2])^\ti_3$ is spanned by the two subspaces
\[
E(\ZZ[1/2])_3 \quad
\mbox{and}
\quad
D(\ZZ[1/2])_3.
\]
Thus, as an element of $\pi_1^\un(\ZZ[1/2])^\ti$, $\si$ may be characterized as the unique element of $\Uu(\ZZ[1/2])^\ti_{-3}$ which pairs trivially with $D(\ZZ[1/2])_3$ and pairs to $1$ with $\zeu(3)$. In particular, it does not depend on any arbitrary choices. 

\begin{sprop}\label{cob6}
In the situation and the notation of segments \ref{cob1}-\ref{cob5}, we have
\begin{align*}
f_{\tau_q} 
	&= \logu(q) \quad (q = 2,3),
\\
f_{\si \tau_2} 
	&= - \frac{7}{8} \left(
	\frac{\logu(2)^4}{24} + \Liu_4(1/2)
	\right),
\\
f_{\si \tau_3} 
	&= \frac{3}{13} \left(
	6 \Liu_4(3) - \frac{1}{4} \Liu_4(9)
	\right).
\end{align*}
\end{sprop}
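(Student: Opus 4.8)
The plan is to bootstrap everything from the reduced deconcatenation coproduct on the shuffle Hopf algebra $A(\ZZ[1/6])$, together with the definition of $\si$ and the half-weight $\le 3$ identities recorded in \cite{PolGonI}. The first identity is definitional: by \S\ref{cob1} the element $\tau_q$ is the basis vector of $\Uu(\ZZ[1/6])_{-1}$ dual to $\logu(q)\in A(\ZZ[1/6])_1$, and by \S\ref{cob3} the function $f_{\tau_q}$ is in turn dual to $\tau_q$; since $A(\ZZ[1/6])_1$ is finite dimensional these two dualities are mutually inverse, so $f_{\tau_q}=\logu(q)$. For the two half-weight four identities, write $\overline\Delta$ for the reduced coproduct. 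As $\si\tau_q$ is a two-letter word in the free generators, $\overline\Delta f_{\si\tau_q}=f_\si\otimes f_{\tau_q}$. Moreover $\zeu(3)$ is primitive, hence it pairs to zero against every product of positive-degree elements of $\Uu(\ZZ[1/6])$, hence against every word of length $\ge 2$; together with $\langle\zeu(3),\si\rangle=1$ this forces $\zeu(3)=f_\si$, so that $\overline\Delta f_{\si\tau_q}=\zeu(3)\otimes\logu(q)$. Thus it suffices to show, for each proposed right-hand side $R_q$, (i) that $\overline\Delta R_q=\zeu(3)\otimes\logu(q)$, and (ii) that the resulting primitive ambiguity $R_q-f_{\si\tau_q}$ vanishes.

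For (i) I would compute $\overline\Delta R_q$ from Goncharov's coproduct formula $\overline\Delta\Liu_n(a)=\sum_{k=1}^{n-1}\Liu_{n-k}(a)\otimes\tfrac{(\logu a)^k}{k!}$, the vanishing $\zeu(2)=0$ (and the de Rham normalization of $\zeu(3)$), the classical evaluations of $\Li_2(1/2)$ and $\Li_3(1/2)$ — which give $\Liu_2(1/2)=-\tfrac12(\logu 2)^2$ and $\Liu_3(1/2)=\tfrac78\zeu(3)+\tfrac16(\logu 2)^3$ — and, for the $\tau_3$ case, the distribution relation $\Liu_n(x^2)=2^{n-1}\bigl(\Liu_n(x)+\Liu_n(-x)\bigr)$ and the inversion relation $\Liu_3(1/x)\equiv\Liu_3(x)$ modulo decomposables, which reduce every half-weight $\le 3$ term that arises to $\logu 2$, $\logu 3$, $\zeu(3)$, $\Liu_3(-2)$, $\Liu_3(3)$. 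The essential arithmetic input is then the decomposition of $\Liu_3(-3)$ modulo decomposables in the basis $\{\zeu(3),\Liu_3(-2),\Liu_3(3)\}$ of $A(\ZZ[1/6])_3$ modulo decomposables furnished by Proposition 4.7 of \cite{PolGonI}, whose $\zeu(3)$-coefficient is $-\tfrac{13}{6}$; since $\si$ annihilates $\Liu_3(-2)$ and $\Liu_3(3)$ by construction, the whole of $\overline\Delta R_q$ collapses to a scalar multiple of $\zeu(3)\otimes\logu(q)$, and the constants $-\tfrac78$ and $\tfrac3{13}$ are precisely those that simultaneously make this scalar equal to $1$ and kill the $A_1\otimes A_3$ and $A_2\otimes A_2$ components.

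For (ii), the $\tau_2$ identity is the easy one. By the functoriality of \S\ref{cob4} (applied to $\ZZ[1/2]\supset$ nothing, i.e. the surjection $\pi_1^\un(\ZZ[1/6])\twoheadrightarrow\pi_1^\un(\ZZ[1/2])$), both $f_{\si\tau_2}$ and $R_2$ may be computed already in $A(\ZZ[1/2])$ — note $1/2\in\thrpl(\ZZ[1/2])$ and $\si,\tau_2\in\pi_1^\un(\ZZ[1/2])$. A one-line check shows that $A(\ZZ[1/2])_4$, spanned by $(\logu 2)^4$, $\zeu(3)\,\logu 2$ and $f_{\si\tau_2}$, carries no nonzero primitives, so $\overline\Delta$ is injective on it and (i) concludes. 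The $\tau_3$ identity must be handled inside $A(\ZZ[1/6])_4$, where the primitives form the $5$-dimensional space dual to the half-weight $-4$ part of the free Lie algebra on $\tau_2,\tau_3,\si$. Here I would remove the ambiguity by also matching, on both sides, the pairings against the five Lie elements $[\si,\tau_2]$, $[\si,\tau_3]$, and a basis of the degree-$4$ part of the free Lie algebra on $\tau_2,\tau_3$ — equivalently, by matching the coefficients of the Lyndon-word shuffle coordinates of \S\ref{cob3}. Each such pairing again reduces via $\overline\Delta$ to the half-weight $3$ data above ($\langle\Liu_3(3),\si\rangle=\langle\Liu_3(-2),\si\rangle=0$ and $\langle\Liu_3(-3),\si\rangle=-\tfrac{13}{6}$) together with the symbols of $\Liu_4(3)$ and $\Liu_4(9)$, i.e. their iterated coproducts into $A_1^{\otimes 4}$.

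The main obstacle is the $\tau_3$ case: pinning down the exact constant $\tfrac3{13}$ is not formal — it rests on the half-weight $3$ decomposition of $\Liu_3(-3)$ over $\ZZ[1/6]$ from \cite{PolGonI}, which is the one ingredient reflecting the actual arithmetic of $\ZZ[1/6]$ rather than coproduct combinatorics — and then one still needs a fairly lengthy but routine bookkeeping to verify that all the remaining coproduct components and all five primitive pairings cancel at once. (Consistent with the footnote to Theorem \ref{fbctheorem}, if one is content with a Kim function up to a small error one may equally take these half-weight $3$ identities to hold only approximately.)
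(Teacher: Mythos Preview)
Your route differs from the paper's: the paper's proof is a one-line citation to \S4.3 of \cite{PolGonI}, supplemented only by the remark that the functoriality of \S\ref{cob4}--\ref{cob5} lets the $f_{\si\tau_2}$ identity, proved there inside $A(\ZZ[1/2])$, be read verbatim inside $A(\ZZ[1/6])$. You instead sketch a self-contained argument via the reduced coproduct, which is more explicit and shows where the constants $-\tfrac{7}{8}$ and $\tfrac{3}{13}$ come from. Your step (i) does go through: for $R_3=\tfrac{3}{13}\bigl(6\Liu_4(3)-\tfrac14\Liu_4(9)\bigr)$ the $A_1\otimes A_3$ and $A_2\otimes A_2$ components of $\overline\Delta R_3$ vanish identically (via $\Liu_1(9)=3\Liu_1(3)$ and $2\Liu_2(3)=\Liu_2(-3)$), and the $A_3\otimes A_1$ component reduces to $-2\langle\Liu_3(-3),\si\rangle\,\zeu(3)\otimes\logu 3$, which the input $\langle\Liu_3(-3),\si\rangle=-\tfrac{13}{6}$ turns into $\tfrac{13}{3}\,\zeu(3)\otimes\logu 3$.

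There is, however, a conceptual slip in your step (ii) for the $\tau_3$ case. You correctly note that $A(\ZZ[1/2])_4$ carries no nonzero primitives, yet then assert that the primitives in $A(\ZZ[1/6])_4$ form a $5$-dimensional space ``dual to the half-weight $-4$ part of the free Lie algebra on $\tau_2,\tau_3,\si$''. This conflates the primitives of $A(Z)$ with those of its graded dual $\Uu(Z)$: the primitives of the cocommutative Hopf algebra $\Uu(Z)$ are the Lie algebra, but the primitives of the commutative Hopf algebra $A(Z)=\Oo(\pi_1^\un(Z))$ are $\Hom(\pi_1^\un(Z),\Ga)$, the dual of the \emph{abelianization}, spanned by the $f_\rho$ for $\rho$ a free generator. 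Since all free generators of $\pi_1^\un(\ZZ[1/6])$ sit in odd half-weights $-1,-3,-5,\ldots$, there is no primitive in half-weight $4$ over $\ZZ[1/6]$ either. Hence $\overline\Delta$ is already injective on $A(\ZZ[1/6])_4$, and your step (i) alone finishes the $\tau_3$ case; the proposed matching against five Lie elements is redundant (though not wrong, since those pairings are then automatically equal).
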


\begin{proof}
See section 4.3 of \cite{PolGonI}. The discussion of functoriality in segments \ref{cob4}, \ref{cob5} above, shows that the second equation, which, as interpreted in loc. cit., takes place in $A(\ZZ[1/2])^\times$, holds equally in $A(\ZZ[1/6])^\times$, with no conflict of notation. 
\end{proof}

\sUntitled{}{
Since 
\[
\logu(2) \zeu(3)=
f_\tau \sha f_\si = f_{\tau \si} + f_{\si \tau}
\]
and similarly for $f_{\up}$
we obtain
\begin{align*}
f_{\tau \si} 
	&= 
	\logu(2)\zeu(3) + 
 	\frac{7}{8} \left(
	\frac{\logu(2)^4}{24} + \Liu_4(1/2)
	\right) 
	\\
	&= (7/8) \Liu_4(1/2) + (7/192) \logu(2)^4 + \logu(2)\zeu(3)
\end{align*}
and
\begin{align*}
f_{\up \si} 
	&=
	\logu(3)\zeu(3)
	- \frac{3}{13} \left(
	6 \Liu_4(3) - \frac{1}{4} \Liu_4(9)
	\right)
	\\
	&= \logu(3)\zeu(3)
	- (18/13) \Liu_4(3) + (3/52) \Liu_4(9).
\end{align*}
}

\sProposition{22jy5}{
Let $Z \subset \Spec \ZZ$ be an open subscheme with complement 
\[
S = (\Spec \ZZ) \setminus Z.
\] 
As above, we let $\Uu(Z)^\times$ denote the completed universal enveloping algebra of $\pi_1^\un(Z)^\times$. For each $q \in S$, let $\tau_q \in \Uu(Z)^\times_{-1}$ be the unique element such that $\langle \logu(q'), \tau_q \rangle = 1$ if $q = q'$ and $0$ otherwise. For $n$ odd $\ge 3$, pick arbitrarily free generators $\si_n \in \Uu(Z)^\times_{-n}$ such that $\langle \zeu(n), \si_n \rangle = 1$. For a word $w$ in the free generators $\tau_q, \si_n$, let $f_w$ denote the element of $A(Z)^\times$ dual to $w$ with respect to the basis formed by such words. We denote the $q$-adic valuation on $\QQ$ associated to a prime $q$ by $v_q$. Then, independently of the choice of generators $\si_n$, 
\[
\logu (q) = f_{\tau_q},
\]
\[
\zeu(n) = f_{\si_n},
\]
and for any $a \in (\thrpl)(Z)$ and any $n \ge 1$,
\begin{align*}
\Liu_n(a) = 
\underset{q_1, \dots, q_{n-r} \in S} 
{\sum_{3 \le \, r \text{ odd} \, \le n}}
\langle \Liu_r(a), \si_r \rangle 
& v_{q_1}(a) \cdots v_{q_{n-r}}(a)
f_{\si_r \tau_{q_1} \cdots \tau_{q_{n-r}}} \\
& - \sum_{q_1, \dots, q_n \in S}
v_{q_1}(1-a) v_{q_2}(a) \cdots v_{q_{n}}(a)
f_{\tau_{q_1} \cdots \tau_{q_n}}.
\end{align*}
}

\begin{proof}
This is (a corrected version of) Remark 5.3 of \cite{PolGonII}.
\end{proof}

\subsection{}
\label{23jy1}
We allow ourselves to replace words in $\tau_2, \tau_3$ with words in $2,3$. Applying Proposition \ref{22jy5}, in half-weight 2 we obtain 
\begin{align*}
(\logu 2)^2 &= 2 f_{22} \\
(\logu 2)(\logu 3) &= f_{23} + f_{32} \\
(\logu 3)^2 &= 2 f_{33} \\
\Liu_2(-2) &= -f_{32}, 
\end{align*}
hence,
\begin{align*}
f_{22} &= \frac{1}{2} (\logu 2)^2 \\
f_{23} &= (\logu 2)(\logu3) + \Liu_2(-2) \\
f_{32} &= - \Liu_2(-2) \\
f_{33} &= \frac{1}{2} (\logu 3)^2
\end{align*}
and in half-weight 3 we obtain,
\begin{align*}
(\logu 2)^3 &= 6 f_{222} \\
(\logu2)^2(\logu3) &= 2(f_{223}+f_{232}+f_{322}) \\
(\logu2)(\logu3)^2 &= 2(f_{233}+f_{323}+f_{332}) \\
(\logu3)^3 &= 6f_{333} \\
(\logu2)\Liu_2(-2) &= -(f_{232}+2f_{322}) \\
(\logu3)\Liu_2(-2) &= -(2f_{332}+f_{323}) \\
\Liu_3(-2) &= -f_{322} \\
\Liu_3(3) &= -f_{233} \\
\zeu(3) &= f_\si,
\end{align*}
hence
\[
f_{223} = -\Liu_3(-2) + (\logu 2)\Liu_2(-2) +
\frac{1}{2} (\logu 2)^2 \logu 3
\]
\[
f_{233} = -\Liu_3(3).
\]

\subsection{}
In half weight 4, we expand only those polynomials in the polylogarithmic coordinates needed to convert the remaining shuffle coordinates $f_{2223}, f_{2233}, f_{2333}$. We find,
\begin{align*}
(\logu2)^3 (\logu 3) 
&= 6(f_{2223} + f_{2232} + f_{2322} + f_{3222})\\
(\logu2)^2 \Liu_2(-2)
&= -2(f_{2232}+2f_{2322}+3f_{3222}) \\
(\logu2)\Liu_3(-2)
&= -(f_{2322}+3f_{3222}).
\end{align*}
We are able to eliminate $f_{3222}$ with the help of
\begin{align*}
\Liu_4(-2) 
&= \langle \Liu_3(-2), \si \rangle v_2(-2)f_{\si 2} 
- v_3(3)v_2(-2)^3f_{3222} \\
&= -f_{3222},
\end{align*}
to obtain
\begin{align*}
f_{2223} = \Liu_4(-2) &- (\logu2)\Liu_3(-2) \\ 
&+\frac{1}{2} (\logu2)^2\Liu_2(-2) + \frac{1}{6} (\logu2)^3(\logu3).
\end{align*}

\subsection{}
\label{26jy1}
We have 
\begin{align*}
X(\ZZ[1/6]) = \{2,\frac{1}{2}, -1\} 
	& \cup \{3, \frac{1}{3}, \frac{2}{3}, \frac{3}{2}, 
	-\frac{1}{2}, -2\} \\
	& \cup \{4, \frac{1}{4}, \frac{4}{3}, 
	\frac{3}{4}, -\frac{1}{3}, -3\} \\
	&\cup \{-\frac{1}{8}, \frac{1}{9}, \frac{9}{8}, \frac{8}{9},
	9,-8 \}
\end{align*}
divided into $S_3$-orbits. We have for any $a \in X(Z)$ (or more generally, any $\Gm$-equivariant cocycle)
\[
\Delta' \Liu_n(a) = 
\sum_{i = 1}^{n-1} \Liu_{n-i}(a) \otimes
\frac{(\logu(a))^i }{i!}.
\]
Let $\Delta'_3$ denote the reduced coproduct 
\[
A_3(Z)^\times \to A_1(Z)^\times
\otimes A_2(Z)^\times \oplus A_2(Z)^\times \otimes A_1(Z)^\times
\]
and let $\Delta'_{1,2}$ denote its composite with the projection onto the factor $A_1(Z)^\times \otimes A_2(Z)^\times$. We recall that $E_i(Z) \subset A_i(Z)^\times$ denotes the space of extensions $\Ext^1_Z \big( \QQ(0), \QQ(i) \big)$, equal to the kernel of the reduced coproduct. We recall from Corollary 4.4 of \cite{PolGonI} that 
\[
\ker(\Delta'_{1,2}) = \ker (\Delta'_3) = E_3 = \QQ \zeu(3).
\]

\subsection{}
\label{26jy2}
We record shuffle decompositions of decomposables.
\begin{align*}
(\logu2)^2(\logu3)^2 &=
4f_{2233}+4f_{2323}+4f_{3223}
+4f_{2332}+4f_{3232}+4f_{3322}
\\
(\logu2)(\logu3)\Liu_2(-2) &=
-2f_{2332}-3f_{3232}-4f_{3322}-f_{2323}-2f_{3223}
\\
(\logu3)\Liu_3(-2) &= -2f_{3322}-f_{3232}-f_{3223}
\\
(\logu2)\Liu_3(3) &= -2f_{2233}-f_{2323}-f_{2332}
\end{align*}
We note the following relation between $f_{2233}$ and $f_{3322}$:
\[
(1/4)(\logu 2)^2 (\logu 3)^2
+ (\logu 3)\Liu_3(-2) + (\logu2) \Li_3(3)
= -f_{2233} - f_{3322}
\]

\subsection{}
\label{26jy4}
We record expansions of $\Delta'(L)$ for $L$ in the polylogarithmic basis $\{ (\logu2)^2, (\logu2)(\logu3), (\logu3)^2, \Liu_2(-2) \}$ of $A^\times_2$ in the basis for $A^\times_1$ in a table. We also include $\Delta'(\Liu_2(3))$ and $\Delta'(\Liu_2(\frac{2}{3}))$. We use the abbreviations $l$ for $\logu$ and $L$ for $\Liu$.
\[
\begin{matrix}
			& l(2)^2	& l(2)l(3)	& l(3)^2	& L_2(-2)	&& L_2(\frac{2}{3})	& \Liu_2(3)	\\
l(2)\otimes l(2)	& 2		& 0		& 0		& 0		&|& 0			& 0			\\
l(2)\otimes l(3)	& 0		& 1		& 0		& 0		&|& 0			& -1			\\
l(3)\otimes l(2)	& 0		& 1		& 0		& -1		&|& 1			& 0			\\
l(3)\otimes l(3)	& 0		& 0		& 2		& 0		&|& -1			& 0
\end{matrix}
\]
Using $\Delta': A_2 \xto{\sim} A_1\otimes A_1$, and the above table, we find 
\begin{align*}
\Liu_2(\frac{2}{3}) &= -\frac{1}{2} (\logu3)^2 - \Liu_2(-2) \\
\Liu_2(3) &= -\logu(2)\logu(3) - \Liu_2(-2).
\end{align*}

\subsection{}
\label{26jy3}
We write $\Liu_3(\frac{2}{3})$ as a polynomial in $\Ee \cup \Pp$. For each polylogarithmic basis element in $A_3(Z)^\times$, we expand $\Delta_{2,1}(L) \in A_2(Z)^\times \otimes A_1(Z)^\times$ in the basis induced by our polylogarithmic basis for $A_1(Z)^\times$ and $A_2(Z)^\times$. We record the result in a matrix along with the expansion of $\Delta_{2,1}(\Liu_3(\frac{2}{3}))$ in the rightmost column. 
\[
\tiny
\begin{matrix}
				& l(2)^3	& l(2)^2l(3)	&l(2)l(3)^2		& l(3)^3	&l(2)L_2(-2)	& l(3)L_2(-2)	& L_3(-2)	& L_3(3)	& L_3(\frac{2}{3})	\\
l(2)^2 \otimes l(2) 	&3		&0			&0			&0		&0			&0			&0		&0		&0				\\
l(2)^2 \otimes l(3) 	&0		&1			&0			&0		&0			&0			&0		&0		&0				\\
l(2)l(3) \otimes l(2) 	&0		&2			&0			&0		&-1			&0			&0		&0		&0				\\
l(2)l(3) \otimes l(3) 	&0		&0			&2			&0		&0			&0			&0		&-1		&0				\\
l(3)^2 \otimes l(2) 	&0		&0			&1			&0		&0			&-1			&0		&0		& -\frac{1}{2}		\\
l(3)^2 \otimes l(3) 	&0		&0			&0			&3		&0			&0			&0		&0		& \frac{1}{2}		\\
L_2(-2) \otimes l(2) 	&0		&0			&0			&0		&0			&0			&1		&0		& -1				\\
L_2(-2) \otimes l(3) 	&0		&0			&0			&0		&0			&1			&0		&-1		& 1
\end{matrix}
\]
From this and the exact sequence
\[
0 \to E_3(Z) \to A_3(Z)^\times \xto{\Delta_{2,1}} A_2(Z)^\times \otimes A_1(Z)^\times \to 0
\]
we find that 
\[
\Liu_3(\frac{2}{3}) \equiv 
-\frac 1 2 \logu(2)\logu(3)^2
+\frac 1 6 (\logu 3)^3
-\Liu_3(-2) - \Liu_3(3)
\mod \zeu(3).
\]

The Sage code

\begin{verbatim}
Q = Qp(13)

def l(z):
     return Q(z).log()

def Li(n,z):
     return Q(z).polylog(n)

def zeta(n):
     return 2^(n-1)*Li(n,-1)/(1-2^(n-1))

q = (Li(3,2/3) + (1/2)*l(2)*l(3)^2 \
- (1/6)*l(3)^3 + Li(3,-2) + Li(3,3))/zeta(3)

r = q.rational_reconstruction()
print(r)
\end{verbatim}
outputs the number $1$. Hence, at least up to the chosen precision, we have 
\[
\Liu_3(\frac{2}{3}) =
-\frac 1 2 \logu(2)\logu(3)^2
+\frac 1 6 (\logu 3)^3
-\Liu_3(-2) - \Liu_3(3)
+ \zeu(3).
\]

\subsection{}
We apply Proposition 5.8 to $\Li_{4}(2/3)$ and $\Li_{4}(4/3)$ :
\begin{multline} \label{eq:1} 
\Li_{4} (2/3) = \langle \Li_{3}(2/3),\sigma_{3} \rangle ( f_{\sigma_{3}\tau_{2}} - f_{\sigma_{3}\tau_{3}}) \\
+ f_{3222} - (f_{3322} + f_{3232} + f_{3223}) 
+ (f_{3233} + f_{3323} + f_{3332}) - f_{3333} 
\end{multline}
\begin{multline} \label{eq:2}
\Li_{4} (4/3) = \langle \Li_{3}(4/3),\sigma_{3} \rangle (2 f_{\sigma_{3}\tau_{2}} - f_{\sigma_{3}\tau_{3}}) \\
+ 8f_{3222} - 4(f_{3322} + f_{3232} + f_{3223}) 
+ 2(f_{3233} + f_{3323} + f_{3332}) - f_{3333} 
\end{multline}
We have 
\begin{equation} \label{eq:3}
f_{3233} + f_{3323} + f_{3332} = f_{2}f_{333} - f_{2333}.
\end{equation}
We have 
\begin{equation} \label{eq:4}
\log(3) \Li_{3}(-2) = - f_{3322} - (f_{3322} + f_{3232} + f_{3223}).\end{equation}

\subsection{}
By (\ref{eq:3}) and (\ref{eq:4}), we can regard (\ref{eq:1}) and (\ref{eq:2}) as a linear system of equations in $(f_{3322},f_{2333})$:
$$ \begin{array}{l}
\Li_{4} (2/3) - \langle \Li_{3}(2/3),\sigma_{3} \rangle ( f_{\sigma_{3}\tau_{2}} - f_{\sigma_{3}\tau_{3}})- f_{3222} -  \log(3) \Li_{3}(-2) - f_{2}f_{333} + f_{3333} 
\\= f_{3322} - f_{2333}
\\
\Li_{4} (4/3) - 
\langle \Li_{3}(4/3),\sigma_{3} \rangle 
(2 f_{\sigma_{3}\tau_{2}}  - f_{\sigma_{3}\tau_{3}}) 
- 8f_{3222} 
- 4 \log(3) \Li_{3}(-2) - 2f_{2}f_{333} 
+ f_{3333} 
\\
= 4f_{3322} - 2f_{2333}
\end{array} $$
If we denote 
\[
\Li_{4} (2/3) 
- \langle \Li_{3}(2/3),\sigma_{3} \rangle 
( f_{\sigma_{3}\tau_{2}} - f_{\sigma_{3}\tau_{3}})
- f_{3222} -  \log(3) \Li_{3}(-2) - f_{2}f_{333} + f_{3333}
\]
by $E$ and
\begin{align*}
\Li_{4} (4/3) 
- \langle \Li_{3}(4/3),\sigma_{3} \rangle 
(2 f_{\sigma_{3}\tau_{2}}  &- f_{\sigma_{3}\tau_{3}}) 
- 8f_{3222} \\&- 4 \log(3) \Li_{3}(-2) - 2f_{2}f_{333} + f_{3333}
\end{align*}
by $F$,
then we find that 
\[
\left\{\begin{array}{l} f_{3322} = - E + \frac{F}{2}
\\ f_{2333} = -\frac{E}{2} + \frac{F}{2}.
\end{array}
\right.
\]

The shuffle coordinates appearing in the above expressions for $E$ and $F$ have all been expanded in motivic polylogarithms above. It remains to compute the coefficient $\langle \Liu_3(4/3), \si_3 \rangle$.

\subsection{}
We have
$$
\Delta \Li_{3}(4/3) = \Li_{2}(4/3) \otimes \log(4/3) + \Li_{1}(4/3) \otimes \frac{1}{2} (\log(4/3))^{2}.
$$
By Proposition \ref{22jy5},
$$
\Li_{2}(4/3) = 2 f_{32} - f_{33}
$$
and
$$ f_{32} = - \Li_{2}(-2). $$
Thus, 
$$ \Li_{2}(4/3) = -2\Li_{2}(-2) - \frac{1}{2} (\log(3))^{2}.$$

Additionally, 
$$\Li_{1}(4/3) = - \log( 1 - 4/3) = - \log(-1/3) = - \log(-3) = - \log(3),$$
$$\log(4/3) = 2 \log(2) - \log(3),$$
$$\log(4/3)^{2} = 4 \log(2)^{2} - 4 \log(2)\log(3) + \log(3)^{2}.$$
So 
\begin{multline*} 
\Delta \Li_{3}(4/3) = 
(-2\Li_{2}(-2) - \frac{1}{2} (\log(3))^{2}) \otimes (2 \log(2) - \log(3)) 
\\ 
+ (-\log(3)) \otimes ( 2 \log(2)^{2} - 2 \log(2)\log(3) + \frac{1}{2} \log(3)^{2}).
\end{multline*}

\subsection{}
We deduce 
\begin{multline*} \Delta_{2,1} \Li_{3}(4/3) = 
(-2\Li_{2}(-2) \otimes (2 \log(2)
+ - \frac{1}{2} (\log(3))^{2}) \otimes (2 \log(2) 
\\ 
+ (-2\Li_{2}(-2) \otimes - \log(3))
+  - \frac{1}{2} (\log(3))^{2} \otimes  - \log(3)
\\
 = 
-4\Li_{2}(-2) \otimes \log(2)
 - \log(3)^{2} \otimes \log(2) 
\\ + 2 \Li_{2}(-2) \otimes  \log(3)
+ \frac{1}{2} (\log(3))^{2} \otimes \log(3).
\end{multline*}

We use the table of \S\ref{26jy3} and the fact that $\ker \Delta_{2,1}$ is generated by $\zeta(3)$. Let us denote by $A, B, C, D$ the four last lines of the table : 

$A = \log(3)^{2} \otimes \log(2)$

$B = \log(3)^{2} \otimes \log(3)$

$C= \Li_{2}(-2) \otimes \log(2)$

$D= \Li_{2}(-2) \otimes \log(3)$

We have found 
\begin{equation} \label{eq:kernel1} \Delta_{2,1} (\Li_{3}(4/3)) = - A + \frac{1}{2} B - 4C + 2D.
\end{equation}
According to the table, 

$\Delta_{2,1} (\log(2)\log(3)^{2} + 2\Li_{3}(3)) = A - 2D$

$\Delta_{2,1} (\log(3)^{3})  = 3B$

$\Delta_{2,1} (\Li_{3}(-2))  = C$

$\Delta_{2,1} (\log(3)\Li_{2}(-2))  = -A + D.$

Thus 

\begin{equation} \label{eq:kernel2} \left\{
\begin{array}{l} \Delta_{2,1} (-\log(2)\log(3)^{2} - 2\Li_{3}(3)+ 2\log(3)\Li_{2}(-2)) = A
	\\ 
\Delta_{2,1} (\frac{1}{3}\log(3)^{3})  = B
\\ 
\Delta_{2,1} (\Li_{3}(-2))  = C
\\ 
\Delta_{2,1} (-\log(2)\log(3)^{2} - 2\Li_{3}(3)- \log(3)\Li_{2}(-2))  = D
\end{array} \right.
\end{equation}

\subsection{}
Define 
\[
a = -\log(2)\log(3)^{2} - 2\Li_{3}(3) - 2\log(3)\Li_{2}(-2)
\]
\[
b= \log(2)\log(3)^{2} +2\Li_{3}(3) +\log(3)\Li_{2}(-2).
\]
Comparing (\ref{eq:kernel1}) and (\ref{eq:kernel2}) we deduce 
\begin{multline*} 
\Li_{3}(4/3) - (- (-\log(2)\log(3)^{2} - 2\Li_{3}(3) - 2\log(3)\Li_{2}(-2)) 
\\
+ \frac{1}{6} \log(3)^{3} - 4 \Li_{3}(-2) + 2(-\log(2)\log(3)^{2} - 2\Li_{3}(3)- \log(3)\Li_{2}(-2))) 
\\=
\Li_{3}(4/3) + 
\left(a - \frac{1}{6} \log(3)^{3} 
+ 4 \Li_{3}(-2) + 2b
\right) 
\in \ker \Delta_{2,1}
\end{multline*}

Thus the above element is a multiple of $\zeta(3)$. The coefficient is precisely $\langle \Li_{3}(4/3),\sigma_{3} \rangle$. Computation using a computer algebra system shows that the $p$-adic period of 
\[
\langle \Li_{3}(4/3),\sigma_{3} \rangle
=
\frac
{\Li_{3}(4/3) + 
\left(a - \frac{1}{6} \log(3)^{3} 
+ 4 \Li_{3}(-2) + 2b
\right) }
{\ze(3)}
\]
is equal to $-\frac 1 3$ for several primes (in particular, up to high $p$-adic precision).

\subsection{}
Assembling the previous computations, we have 
\begin{multline*} 
E=\Li_{4} (2/3) + \bigg( \frac{7}{8} \bigg( \frac{\log(2)^{4}}{24} + \Li_{4}(1/2) \bigg) + \frac{3}{13} \bigg( 6 \Li_{4}(3)- \frac{1}{4} \Li_{4}(9) \bigg)\bigg)
\\ 
+ \Li_{4}(-2) -  \log(3) \Li_{3}(-2) - \log(2)\frac{\log(3)^{3}}{3!} + \frac{\log(3)^{4}}{4!} 
\end{multline*}
\begin{multline*} F=\Li_{4} (4/3) -(1/3) 
\bigg(\frac{7}{4} \bigg( \frac{\log(2)^{4}}{24} + \Li_{4}(1/2) \bigg) 
+ \frac{3}{13} \bigg( 6 \Li_{4}(3)- \frac{1}{4} \Li_{4}(9) \bigg)\bigg) 
\\+ 8\Li_{4}(-2) - 4 \log(3) \Li_{3}(-2) - 2\log(2)\frac{\log(3)^{3}}{3!} + \frac{\log(3)^{4}}{4!} 
\end{multline*}

\subsection{}
\label{deceq}
Summarizing the results of our computations, we have the following equalities up to high precision. In longer equations we abbreviate $l = \logu, L = \Liu$.
\begin{align*} 
f_{\tau} = \logu(2) \\
f_{\up} = \logu(3) \\
f_{\tau \up} = (\logu 2)(\logu 3) + \Liu_2(-2) \\
f_\si = \zeu(3) \\
f_{\tau \tau \up} = -\Liu_3(-2) + (\logu 2)\Liu_2(-2) +
\frac{1}{2} (\logu 2)^2 \logu 3
 \\
f_{\tau \up \up} =  -\Liu_3(3) \\
f_{\tau \si} = (7/8) \Liu_4(1/2) + (7/192) \logu(2)^4 + \logu(2)\zeu(3) \\
f_{\up \si} =  \logu(3)\zeu(3)
	- (18/13) \Liu_4(3) + (3/52) \Liu_4(9)\\
f_{\tau \tau \tau \up} = 
\Liu_4(-2) - (\logu2)\Liu_3(-2) 
+(1/2) (\logu2)^2\Liu_2(-2) \\
+ (1/6) (\logu2)^3(\logu3)
\\
f_{\tau \tau \up \up} = (7/144)l(2)^4 - (1/4)l(2)^2l(3)^2 
+ (1/48)l(3)^4 \\
+ 2l(3)L_3(-2) + l2L_3(3) + L_4(2/3) 
\\
+ (7/6)L_4(1/2) + (3/2)L_4(3) - (1/16)L_4(9) \\
- 3L_4(-2) 
- (1/2)L_4(4/3)
\\
f_{\tau \up \up \up} =
-(35/1152)l(2)^4 - (1/12)l(2)l(3)^3 
\\
- (3/2)l(3)L_3(-2) 
- (1/2)L_4(2/3) \\- (35/48) L_4(1/2) 
- (12/13) L_4(3) \\+ (1/26) L_4(9) 
+ (7/2) L_4(-2) + (1/2) L_4(4/3)
\end{align*}

\section{The polylogarithmic quotient}
\label{polquosection}

Throughout this section, Tannakian fundamental groups are endowed with the usual \emph{functorial} product. 

\subsection{}\label{27n1}
The literature on motivic tangential base-points for unirational varieties of dimension greater than 1 is not fully fleshed out. The theory is nevertheless regarded as known, as it amounts to a fairly straightforward generalization of the 1-dimensional case, complemented by techniques for bootstrapping to higher dimensions in \S4 of Deligne-Goncharov \cite{DelGon}. $p$-Adic aspects are discussed in \"Unver \cite{UnverRelations}.
Here we provide an outline of the construction and verify that our integrality conditions on tangential base-points ensure that the associated fundamental groups are unramified. We begin with the $l$-adic realization. 

\subsection{}\label{27n2}
Let $Z$ be an open subscheme of $\Spec \ZZ$, $\overline X \to Z$ a smooth proper morphism whose generic fiber is unirational, $D \subset \overline X$ a relative simple normal crossings divisor whose irreducible components are smooth and absolutely irreducible, and let $X$ denote its complement in $\overline X$. By a $Z$-integral base-point we mean either a section of $X \to Z$ or a nonvanishing $Z$-family of tangent vectors along a stratum of $\overline X$ which are not tangent to any boundary divisor. 

\subsection{}\label{20606a}
Let us be more explicit about our assumptions on a $Z$-integral tangent vector $v$. We are provided with a $Z$-point
\[
x: Z \to \overline X
\]
of the compactification.  Let
\[
T_x = \Spec \Sym^* \Tt_x^\lor
\]
denote the normal bundle to $x$ in $\overline X$. We use $\Oo_Z$ both for the structure sheaf and for the coordinate ring of $Z$. We may equivalently think of $\Tt_x$ as a quasi-coherent sheaf on $Z$ or as a module over the coordinate ring $\Oo_Z$, and we do not distinguish between these notationally.  In this notation,
\[
v \in \Hom_Z(Z, T_x) = \Tt_x
\]
is a section which is \textit{nowhere tangent to the boundary divisors}.

This last phrase may be interpreted in several equivalent ways; what we need is the following. If $R$ is a ring, we denote by $R((t))$ the ring $R\llbracket t \rrbracket[t\inv]$ of Laurant series with coefficients in $R$. Let $\hat D_Z = \Spec \Oo_Z \llbracket t \rrbracket$, let
\[
\hat D_Z^\circ = \Spec \Oo_Z (( t)),
\]
and let $T_0$ denote the normal bundle to $\hat D$ along the zero section $\{t = 0\}$. Then there's a map
\[
h:\hat D \to \bar X
\]
which maps the zero section $\{t = 0\}$ to $x$, such that $\hat D^\circ_Z$ maps to $X$ and such that the induced map of normal bundles
\[
T_0  \to T_x
\quad
\mbox{maps}
\quad 
1 \mapsto v.
\]

Fix a prime $l$ of $\ZZ$. Let $\iota$ denote the natural map
\[
\hat D^\circ_Z \to \AA^1_Z \setminus \{0\}.
\]
Pullback along $\iota$ induces an equivalence of categories of finite \'etale coverings. Consequently, $\iota_*$ induces a monoidal equivalence of categories of lisse $\QQ_l$-sheaves. 

We let $\opnm{Lisse}_{\QQ_l}(\dots)$ denote the category of lisse sheaves. Let $g$ denote the map
\[
\Spec \overline \QQ \to Z
\]
induced by the choice of an algebraic closure $\overline \QQ$ of $\QQ$. The composite (diagonal solid arrow below)
\[
\xymatrix{
\Lisse_{\QQ_l}(X)
\ar[r]^-{h^*_v}
\ar@{.>}[rrd]^-{v^*}
\ar[rrdd]_-{\om_v}
&
\Lisse_{\QQ_l}(\hat D^\circ_Z)
\ar[r]^-{\iota_*}
&
\Lisse_{\QQ_l}(\AA^1_Z \setminus\{0\})
\ar[d]^-{1^*}
\\
&&
\Lisse_{\QQ_l}(Z)
\ar[d]^-{g^*}
\\
&&
\Vect \QQ_l
}
\]
defines a ``tangential'' fiber functor on the category of Lisse $\QQ_l$-sheaves associated to the $Z$-integral tangent vector $v$. We note the intermediate composite, denoted $v^*$, for future use. We also note that the same construction defines tangential fiber functors over various base-extensions of $X$ ($X_{\Qp}$, $X_{\overline \QQ}$, ...) and we continue to use the same notation $\om_v$. A similar construction at the level of Galois categories of finite \'etale coverings provides us with a notion of tangential fiber functors for profinite \'etale fundamental groups, and there's an obvious compatibility between the two constructions. 

We claim that 
\[
\pi_1^l(X_{\bar \QQ}, v) = 
\Aut^\otimes (\om_v)
\]
is unramified at $p$. By this we mean the following.

\begin{sprop}
\label{20610a}
Fix arbitrarily an embedding $\overline \QQ \subset \overline \QQ_p$. There's an associated decomposition group
\[
G_\QQ \supset G_p = G_{\QQ_p}. 
\]
Then the induced action of $G_p$ on $\pi_1^l(X_{\bar \QQ}, v)$ factors through
\[
G_p \surj G_{\FF_p} = \hat \ZZ.
\]
\end{sprop}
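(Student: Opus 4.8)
The plan is to deduce the proposition from the good reduction of $(\overline X, D)$ at $p$ together with the classical specialization theory for fundamental groups of smooth varieties with a good compactification over a strictly henselian trait; throughout I take $l \neq p$, which is the only case with content. First I would reduce to inertia: setting $I_p = \ker(G_p \surj G_{\FF_p})$, it is enough to show that $I_p$ acts trivially on $\pi_1^l(X_{\Qbar}, v)$, since $G_p/I_p \cong \hat\ZZ = G_{\FF_p}$. To that end let $R$ be the strict henselization of $\ZZ_{(p)}$ (equivalently, of $\Oo_Z$ at $p$) --- a strictly henselian discrete valuation ring with residue field $\overline{\FF}_p$ --- and let $K = \operatorname{Frac}(R)$, chosen inside $\Qbar$ compatibly with the fixed embedding $\Qbar \subset \overline{\QQ}_p$, so that $\Qbar = \overline K$ and $I_p = \operatorname{Gal}(\Qbar/K)$. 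In particular $I_p$ fixes $R$ pointwise, hence acts trivially on the residue field $R/\mathfrak{m}_R = \overline{\FF}_p$.

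Next I would spread everything out over $R$. Since $p \in Z$, base change along $\Spec R \to Z$ produces a proper smooth $R$-scheme $\overline{X}_R$ together with a relative simple normal crossings divisor $D_R$ (by the hypotheses of \S\ref{27n2} the components of $D$ remain smooth, absolutely irreducible, and pairwise transverse over $R$), so that $X_R := \overline{X}_R \setminus D_R$ is smooth over $R$ with $X_R \otimes_R \Qbar = X_{\Qbar}$ and special fibre $X_{\overline{\FF}_p}$. The $Z$-point $x$ and the $Z$-integral tangent vector $v$ base change to $x_R$ and $v_R$, which is again nowhere tangent to the boundary (an open condition), so the whole construction of the tangential fibre functor in \S\ref{20606a} --- the arc $h : \hat D_R \to \overline{X}_R$, the map $\iota$, and the restriction at $1$ --- is defined over $R$ and compatible with base change to $\Qbar$ and to $\overline{\FF}_p$; write $\bar v$ for the resulting $\overline{\FF}_p$-integral tangent vector on $X_{\overline{\FF}_p}$.

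Then I would invoke specialization. By Grothendieck's specialization theory for the prime-to-$p$ (equivalently, here, the prounipotent $\QQ_l$-) fundamental group of a smooth scheme with a good compactification over a strictly henselian trait, in the form valid for tangential base points --- see Deligne \cite{Deligne89} and Deligne--Goncharov \cite{DelGon}, as well as \"Unver \cite{UnverRelations} --- there is a specialization isomorphism
\[
sp_v : \pi_1^l(X_{\Qbar}, v) \;\xrightarrow{\ \sim\ }\; \pi_1^l(X_{\overline{\FF}_p}, \bar v)
\]
which is constructed functorially in the $R$-scheme $X_R$, and hence is equivariant for automorphisms of the diagram $\Spec \Qbar \to \Spec R \leftarrow \Spec \overline{\FF}_p$ lying over $\Spec R$. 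Now $I_p$ acts on this diagram by its Galois action on $\Spec \Qbar$, trivially on $\Spec R$, and --- since it fixes $\overline{\FF}_p = R/\mathfrak{m}_R$ pointwise --- trivially on $\Spec \overline{\FF}_p$; thus $I_p$ acts trivially on $X_{\overline{\FF}_p}$, hence on $\pi_1^l(X_{\overline{\FF}_p}, \bar v)$. Transporting through $sp_v$ shows that $I_p$ acts trivially on $\pi_1^l(X_{\Qbar}, v)$; and because $v$ is already defined over $\QQ$, the $I_p$-action produced in this way is exactly the restriction of the Galois action in the statement. This gives the proposition.

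The one genuine subtlety I anticipate is locating and applying the specialization isomorphism in precisely the generality required --- a smooth but non-proper $X_R$ carrying a tangential base point; this is exactly where the good-compactification and relative simple-normal-crossings hypotheses on $(\overline X, D)$ enter, being what makes the prime-to-$p$ fundamental group insensitive to the reduction. As a self-contained alternative I would keep in reserve the observation that, for the hyperplane-arrangement-type $X$ at hand, the Lie algebra of $\pi_1^l(X_{\Qbar}, v)$ admits a Galois-equivariant presentation with generators dual to $H^1(X_{\Qbar}, \QQ_l)$ and relations governed by $H^2$, together with the fact that these cohomology groups are unramified at $p$ by smooth and proper base change applied to $(\overline{X}_R, D_R)$; this however uses that $X$ is a $K(\pi,1)$, so I would present the specialization argument as the primary one.
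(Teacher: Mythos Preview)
Your argument via Grothendieck's specialization isomorphism is correct, but the paper takes a different and more self-contained route. Rather than specializing to the closed fibre, the paper works Tannakian-categorically over $Z_p = \Spec \Zp$: it introduces the category of \emph{relatively unipotent} lisse $\QQ_l$-sheaves on $X$ (those admitting a filtration with graded pieces pulled back from $Z_p$) and observes that the tangential functor $v^*$ from \S\ref{20606a} splits the short exact sequence
\[
1 \to \pi_1\big(\Lisse^\m{un}_{\QQ_l} X_{\overline\QQ_p}\big) \to \pi_1\big(\Lisse^\m{run}_{\QQ_l} X\big) \to \pi_1\big(\Lisse_{\QQ_l} Z_p\big) \to 1.
\]
Comparing with the analogous sequence over $\Qp$ and with the profinite \'etale fundamental groups, the action of $G_{\Qp}$ on $\pi_1^l(X_{\overline\QQ_p}, v)$ is seen to factor through $\pi_1^\m{\acute{e}t}(Z_p) \simeq \opnm{Gal}(\overline\FF_p/\FF_p)$ directly, without ever leaving the Tannakian framework already in place. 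Your approach has the virtue of connecting to classical specialization theory, but at the cost of the subtlety you flag (locating the specialization isomorphism for the $\QQ_l$-prounipotent completion at a tangential base point on a non-proper scheme with good compactification); the paper's approach sidesteps this by never passing to the special fibre, instead exploiting that the integral model $X \to Z_p$ and the integral tangent vector $v$ are enough to produce the factorization categorically.
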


\begin{proof}
Our construction of the $l$-adic tangential fiber functor $\om_v$ applies equally over $Z_p = \Spec \Zp$ and the verification of the above statement may take place over $Z_p$. For this purpose we temporarily replace $Z$ by $Z_p$. We let $f$ denote the structure morphism
\[
X \to Z_p.
\]
We define a lisse $\QQ_l$-sheaf $\Ff$ on $X$ to be \emph{relatively unipotent} if $\Ff$ admits a filtration by lisse subsheaves such that
\[
\gr \Ff \simeq f^* \Gg
\]
for some lisse sheaf $\Gg$ on $Z_p$. We define \emph{relatively lisse sheaves on $X_\Qp$} similarly. We decorate ``un'' to indicate full subcategories of unipotent objects and ``run'' to indicate full subcategories of relatively unipotent objects. 
We have a diagram of $\QQ_l$-Tannakian categories
\[
\xymatrix{
&
\Lisse^\m{un}_{\QQ_l} X_{\overline \QQ_p} 
\ar@/^3ex/[d]^{v^*}
&
\Lisse^\m{run}_{\QQ_l} X_\Qp
\ar[l] \ar@/^3ex/[d]^{v^*}
&
\Lisse^\m{run}_{\QQ_l} X
\ar@/^3ex/[d]^{v^*}
\ar[l]
\\
\Vect \QQ_l
&
\Lisse_{\QQ_l} \Spec \overline \QQ_p
\ar[l]_-\sim
\ar[u]
&
\Lisse_{\QQ_l} \Spec \Qp
\ar[l] \ar[u]
&
\Lisse_{\QQ_l} Z_p,
\ar[u]
\ar[l]
}
\]
which is filled in by canonical natural $\otimes$-isomorphisms. The functor to $\Vect \QQ_l$ endows each of the Tannakian categories appearing in the diagram with a fiber functor which we use as base-point for Tannakian fundamental groups and gives rise to a morphism of split short exact sequences of Tannakian fundamental groups
\begin{tiny}
\[
\xymatrix{
1
\ar[r]
&
\pi_1 \big(
\Lisse^\m{un}_{\QQ_l} X_{\overline \QQ_p}
\big)
\ar[r]
&
\pi_1 \big(
\Lisse^\m{run}_{\QQ_l} X
\big)
\ar[r]
&
\ar@/^3ex/[l]
\pi_1 \big(
\Lisse_{\QQ_l} Z_p
\big)
\ar[r]
&
1
\\
1
\ar[r]
&
\pi_1 \big(
\Lisse^\m{un}_{\QQ_l} X_{\overline \QQ_p}
\big)
\ar[r] \ar@{=}[u]
&
\pi_1 \big(
\Lisse^\m{run}_{\QQ_l} X_{\Qp}
\big)
\ar[r] \ar[u]
&
\ar@/^3ex/[l]
\pi_1 \big(
\Lisse_{\QQ_l} \Spec \Qp
\big)
\ar[r] \ar[u]
&
1.
}
\]
\end{tiny}
On the other hand, the natural transformation from profinite \'etale fundamental groups to Tannakian $l$-adic fundamental groups provides a commuting square
\[
\xymatrix{
\pi_1(\Lisse_{\QQ_l} Z_p)(\QQ_l)
&
**[r]\pi_1^\m{\acute{e}t}(Z_p, a) \simeq 
\opnm{Gal}(\overline \FF_p /\FF_p)
\ar[l]
\\
\pi_1(\Lisse_{\QQ_l} \Spec \Qp)(\QQ_l)
\ar[u]
&
**[r]\pi_1^\m{\acute{e}t}(\Spec \Qp, a) \simeq 
\opnm{Gal}(\overline \QQ_p /\QQ_p).
\ar[l] \ar[u]
}
\]
where $a$ denotes the base-point associated with our choice of algebraic closure. This shows that the action of $\opnm{Gal}(\overline \QQ_p /\QQ_p)$ on 
$
\pi_1 \big(
\Lisse^\m{un}_{\QQ_l} X_{\overline \QQ_p}
\big)
$
factors through $\opnm{Gal}(\overline \FF_p /\FF_p)$ as claimed.
\end{proof}

\subsection{}{}
Similar constructions to the one outlined above provide Betti and de Rham versions of the unipotent fundamental group at a tangential base-point; see, for instance, \S\ref{cca2} below. A mixed Hodge structure on the Betti unipotent fundamental group at a tangential base-point is constructed in works of Hain and collaborators. For instance, Definition 4.21(ii) of Hain-Zucker \cite{HainZucker} provides a structure of pro-\textit{variation of mixed Hodge structures} on the bundle whose fiber at a point $x$ is the prounipotent completion of the fundamental group at $x$; restricting to an appropriate analytic disk and taking a limit mixed Hodge structure, one obtains a mixed Hodge structure at a tangential base-point.

Together with natural comparison isomorphisms, this provides a prounipotent group object $\pi_1^{H+l}(X_\QQ, v)$ in the $\QQ$-Tannakian category $\Rr^{H+l}$ of systems of realizations of type $H+l$ considered in Deligne-Goncharov \cite{DelGon}. The methods of \S4 of loc. cit. provide a unipotent group object $\pi_1^\un(X_\QQ, v)$ in the Tannakian category of mixed Tate motives over $\Spec \QQ$ which realizes to $\pi_1^{H+l}(X_\QQ, v)$. This is an elaboration on Remark 4.14 of loc. cit.

By proposition 1.8 of loc. cit., to show that $\pi_1^\un(X_\QQ, v)$ belongs to the full subcategory of mixed Tate motives over $Z$, it's enough to check that at each $p \in Z$, an $l$-adic realization ($l \neq p$) is unramified, as was done in Proposition \ref{20610a}. This amounts to an elaboration on Remark 4.18 of loc. cit. We write $\pi_1^\un(X,v)$ when we regard the unipotent fundamental group as a prounipotent group object of $\MT(Z)$.

\subsection{}
For a pair of $Z$-integral base-points $a,b$, similar constructions to the ones outlined above provide a $\pi_1^\un(X,a)$-$\pi_1^\un(X,b)$-bitorsor ${_b P _a}$ of \emph{motivic paths from $a$ to $b$}. There are \emph{path composition morphisms}
\[
{_c P_b} \times {_b P_a} \to 
{_c P_a}; 
\]
if $\Bb$ is a set of $Z$-integral base-points, then the collection
\[
\{{_b P_a}\}_{a,b \in \Bb}
\]
has the structure of a \emph{groupoid in $\MT(Z)$} in an obvious sense.

\subsection{}\label{20606b}
If $E$ is an object of $\MT(Z)$, we denote by $\VV^\lor E$ the associated vector group object --- its image under any fiber functor $\om$ is given by
\[
\om(\VV^\lor E) = \Spec \Sym^* \om(E)^\lor.
\]
If $y$ is a tangential base-point whose support is contained in a boundary-divisor, then there's an associated \emph{local monodromy morphism} 
\[
\VV^\lor \QQ(1)^R \to \pi_1^\un(X,y)^R
\]
in any realization $R$. When $X$ is a curve, a motivic version of the local monodromy morphism figures into the very construction of Deligne-Goncharov \cite[\S4.3-4.11]{DelGon}.\footnote{Moreover, some of the issues dealt with in loc. cit. are simplified when working over $Z = \Spec \ZZ$, where there are no nontrivial Kummer motives.} More generally, when there exists a map of pointed-varieties 
\[
(C,c) \to (X,y)
\]
over $Z$, with $C$ a mixed Tate curve and $c$ a $Z$-integral tangent vector along a puncture, one obtains a motivic local monodromy morphism by composing
\[
\VV^\lor \QQ(1) \to \pi_1^\un(C,c) \to \pi_1^\un(X,y).
\]
There's an independence of choice of $(C,c)$, which may be checked in realizations. In the cases considered below, this construction can be made particularly concrete by always taking $C$ to be $\PP^1$ minus a finite collection of disjoint $Z$-sections. 

Suppose $Y \to Z$ satisfies the same conditions as $X\to Z$, $f:X \to Y$ is a morphism over $Z$, and $x_1,x_2$ are $Z$-integral base-points of $X$ whose images $y_1, y_2$ are $Z$-integral base-points of $Y$. Then there's an induced morphism of affine groupoids in mixed Tate motives from the groupoid formed by the base-points $x_1,x_2$ to the groupoid formed by the base-points $y_1,y_2$. 

The local monodromy morphisms and the functoriality are compatible in the following sense: if $f:X \to Y$ sends the tangential base-point $x$ to the tangential base-point $y$ then the local monodromy morphism associated to $y$ is equal to the composite 
\[
\VV^\lor \QQ(1) \to \pi_1^\un(X,x) \to \pi_1^\un(Y,y).
\]
Indeed, this may be checked in any realization, where it becomes evident. 

\subsection{}\label{cca1}
Let $Y$ denote the complement of the 5 divisors
\[
D_1 = \{z_1 = 0\}, \quad
D_{11}=\{z_1 = 1\}, \quad
D_{2} = \{z_2 = 0\}, \quad
D_{22}=\{z_2 = 1\},
\]
and
\[
D_{12} = \{z_1z_2 = 1\}
\]
inside $\AA^2_\ZZ$. Let $j$ denote the natural inclusion 
\[
Y \inj \AA^2 \setminus (D_1 \cup D_2) = \Gm \times \Gm
\]
in the complement of the cross-hairs $+$. Let $(1,1)_0$ denote the tangent vector $(1,1)$ at $0$. 

If $b$ is any base-point, we let $K(b)$ denote the kernel of the induced map of unipotent fundamental groups
\[
\pi_1^\un \big( Y, b \big) \to 
\pi_1^\un \big( \AA^2 \setminus (D_1 \cup D_2), b \big).
\]
Let
\[
\pi^\PL(Y, b):= \pi_1^\un \big( Y, b \big) / [K(b),K(b)]
\]
and let 
\[
\nN^\PL(Y, b) := \Lie \pi^\PL(Y,b).
\]
When $b = (1,1)_0$ we simply write $\pi^\PL(Y)$, $\nN^\PL(Y)$. Our goal for this section is to establish the following

The pro-object
\[
\big( \QQ(1) \big)^2 \ltimes 
\left( \prod_{i=1}^\infty \QQ(i) \right)^3
\]
in mixed Tate motives has a natural structure of Lie algebra: the factors on both sides of the semidirect product are abelian, and the bracket between factors on the left and factors on the right is induced by the canonical isomorphisms
\[
\tag{*}
\QQ(1) \otimes \QQ(i)
\xto{\simeq}
\QQ(i+1)
\]
as follows: the first factor on the left acts on the first and third copies of the product $\prod_{i=1}^\infty \QQ(i)$ via (*) and on the second copy by zero, while the second factor on the left acts on the second and third copies of the product via (*) and on the first copy by zero.

\begin{sprop} 
\label{polquoprop}
In the situation and the notation above, there is an isomorphism of Lie algebra objects in the category of mixed Tate motives over $\ZZ$ 
\[
\tag{*}
\nN^\PL(Y) = 
\big( \QQ(1) \big)^2 \ltimes 
\left( \prod_{i=1}^\infty \QQ(i) \right)^3.
\]
\end{sprop}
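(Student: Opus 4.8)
The plan is to realise (*) as the split extension obtained from the defining short exact sequence of $\pi^{\PL}(Y)$ by abelianising the kernel, and then to pin down the resulting $\QQ(1)^2$-module using the residue (Gysin) sequence for unipotent fundamental groups of divisor complements, cross-checking everything against the purely combinatorial Lemma~\ref{liebasis}. Since $\Gm\times\Gm$ has abelian $\pi_1$, the object $\pi_1^{\un}(\Gm\times\Gm,(1,1)_0)$ is a vector group whose Lie algebra is the abelian motive $\QQ(1)^2$; by \S\ref{20606b} its two coordinates are the images of the local monodromy morphisms $\VV^\lor\QQ(1)\to\pi_1^{\un}$ attached to the removed axes $D_1=\{z_1=0\}$ and $D_2=\{z_2=0\}$, and I write $e_1,e_2$ for the corresponding Lie algebra elements.

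Applying $\Lie$ to
\[
1\to K\to\pi_1^{\un}(Y)\to\pi_1^{\un}(\Gm\times\Gm)\to 1
\]
and passing to the quotient by $[K,K]$ yields an extension of Lie algebra objects of $\MT(\ZZ)$
\[
0\to\Lie K^{\ab}\to\nN^{\PL}(Y)\to\QQ(1)^2\to 0 .
\]
The elements $e_1,e_2$ lift to $\nN^{\PL}(Y)$ and satisfy $[e_1,e_2]=0$ there --- this is one of the relations \S\ref{1.1}(R), which holds already in $\Lie\pi_1^{\un}(Y)$ since $D_1$ and $D_2$ meet transversally, so their local monodromies commute --- hence $\langle e_1,e_2\rangle$ is an abelian sub--Lie-algebra mapping isomorphically onto $\QQ(1)^2$, the extension splits, and $\nN^{\PL}(Y)=\QQ(1)^2\ltimes\Lie K^{\ab}$ with $\Lie K^{\ab}$ abelian. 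This already produces the shape of the target; it remains to identify $\Lie K^{\ab}$ together with the adjoint action of $e_1,e_2$.

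For this I would use that $j$ exhibits $Y$ as $\Gm\times\Gm$ minus the three divisors $D_{11}=\{z_1=1\}$, $D_{22}=\{z_2=1\}$, $D_{12}=\{z_1z_2=1\}$, each a copy of $\Gm$: $D_{11}$ and $D_{22}$ project isomorphically onto the second, resp.\ first, $\Gm$-factor, and $D_{12}$ is parametrised by $t\mapsto(t,t^{-1})$. Removing these one at a time (removing $D_{12}$ first makes the remaining two disjoint), the residue sequence of \S4 of \cite{DelGon} identifies $\Lie K^{\ab}$, as a module over $\Uu(\QQ(1)^2)$ under the adjoint action, with the direct sum over the three components of the modules induced from $\Lie\pi_1^{\un}(D_{jk})=\QQ(1)$, the structure map $\QQ(1)\to\QQ(1)^2$ being read off from the parametrisations: $e_2$ for $D_{11}$, $e_1$ for $D_{22}$, $e_1-e_2$ for $D_{12}$ (a loop about the puncture $t=0$ of $\Gm$ winds once positively about $z_1=0$ and once negatively about $z_2=0$ on $D_{12}$). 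Each induced module is thus free of rank one over a polynomial ring in one variable, with distinguished generators $e_{11},e_{22},e_{12}$: on the first, $e_1$ acts by $\ad$ and $e_2$ by zero; on the second, the roles of $e_1,e_2$ are exchanged; on the third, $e_1$ and $e_2$ act identically (whence $(\ad e_1)^n(e_{12})=(\ad e_2)^n(e_{12})$, cf.\ Lemma~\ref{echo1}). The Tate twists are then forced, since $e_1,e_2$ and each $e_{jk}$ realise $\QQ(1)$: the element $(\ad e_i)^n(e_{jk})$ realises $\QQ(n+1)$, so each chain is $\prod_{i\ge 1}\QQ(i)$ after pro-nilpotent completion, and the bracket of $e_i$ with it is, degree by degree, the canonical isomorphism $\QQ(1)\otimes\QQ(i)\xto{\simeq}\QQ(i+1)$. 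Comparing with Lemma~\ref{liebasis} shows this is exactly the basis and bracket table established there --- the combinatorial shadow of the present statement --- so that assembling the three chains gives $\Lie K^{\ab}\cong\big(\prod_{i\ge 1}\QQ(i)\big)^3$ with precisely the Lie algebra structure fixed before the proposition, and the splitting above becomes the isomorphism (*).

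The one genuinely substantive point --- and the step I expect to be the main obstacle --- is the motivic bootstrapping underlying the two displays and the residue sequence: that $\pi_1^{\un}(Y)$, built from the fibration $Y\to\thrpl$ (equivalently, from the iterated divisor removals above) following \S4 of \cite{DelGon} and the tangential-base-point formalism of \S\ref{27n2}--\ref{20606b}, is an object of $\MT(\ZZ)$ carrying the residue sequence with the stated Tate weights. The unramifiedness at every prime that this requires is Proposition~\ref{20610a}; and the single feature of the geometry here that fails to be simple normal crossings, namely the triple intersection $D_{11}\cap D_{22}\cap D_{12}=\{(1,1)\}$, is irrelevant to the computation because it influences only the commutator subgroup $[K,K]$, which is killed in passing to $\pi^{\PL}(Y)$.
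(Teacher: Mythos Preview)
Your overall architecture is sound and your endpoint matches the paper's: both arguments produce a splitting $\QQ(1)^2\ltimes\Lie K^{\ab}$ via local monodromy around $D_1,D_2$, then identify $\Lie K^{\ab}$ with three Tate-twisted chains, and finally verify the assembled map is an isomorphism by comparison with Lemma~\ref{liebasis} in the de~Rham realization (the paper does this last step by quoting the \"Unver/Oi--Ueno presentation). Where you genuinely diverge from the paper is in how the three generators $e_{11},e_{22},e_{12}\in\Lie K^{\ab}$ are produced \emph{motivically}. You invoke a residue/Gysin description of $K^{\ab}$ as a sum of induced $\Uu(\QQ(1)^2)$-modules, reading the structure maps off the parametrizations of the $D_{jk}$; the paper instead constructs each $\mu_{jk}:\VV^\lor\QQ(1)\to K$ by hand as a local monodromy morphism at a $\ZZ$-integral tangential base-point $y$ supported on $D_{jk}$ (for $\mu_{12}$, on the intersection of $D_{12}$ with the exceptional divisor over $(1,1)$ in $\overline M_{0,5}$), and then transports $K(y)$ to $K=K((1,1)_0)$ using that the $\pi_1^\un(\Gm\times\Gm)$-torsor of paths from $(1,1)_0$ to $y$ is canonically trivial because there are no nontrivial Kummer motives over $\Spec\ZZ$.

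This is the one point where your sketch has a gap rather than merely a different emphasis. The ``residue sequence of \S4 of \cite{DelGon}'' you appeal to is not actually supplied there in the form you need: \S4 of Deligne--Goncharov gives the bootstrapping that the fundamental group is mixed Tate, not a motivic identification of $K^{\ab}$ as an induced module with specified Tate weights. What makes such an identification work over $\Spec\ZZ$ is precisely the step the paper isolates --- the triviality of the path torsor in $\Gm\times\Gm$ between the chosen base-point and an auxiliary base-point sitting on the divisor --- and this is not visible in your residue formulation. Your remark that the triple point $(1,1)$ ``influences only $[K,K]$'' is also doing real work that you have not justified; the paper sidesteps the issue entirely by working in $\overline M_{0,5}$, where the triple point is blown up and the auxiliary base-point for $\mu_{12}$ lives on the exceptional divisor. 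So: your route is more structural and, if the residue sequence were available motivically in the required form, would be a clean alternative; the paper's route is more explicit, and the Kummer-triviality argument is exactly the content that your residue formulation is silently assuming.
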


In view of Proposition \ref{polquoprop}, $\nN^\PL(Y)$ may be thought of as the abstract polylogarithmic Lie algebra of \S\ref{ab2} equipped with a (quite trivial) motivic Galois action. From now on, we allow ourselves to write $\nN^\PL$, $\pi^\PL$, etc. in place of $\nN^\PL(Y)$, $\pi^\PL(Y)$, etc. 

\subsection{}\label{ccb1}
We begin by recalling well-known facts about $M_{0,5}$.
There's an isomorphism $M_{0,5} = Y$, and hence an open immersion 
\[
\ka: M_{0,5} \inj \PP^1 \times \PP^1
\]
with complement the 7 divisors 
\[
D_1 = \{z_1 = 0\}, \quad
D_{11}=\{z_1 = 1\}, \quad
D_{2} = \{z_2 = 0\}, \quad
D_{22}=\{z_2 = 1\},
\]
\[
D_{12} = \{z_1z_2 = 1\}, \quad
\{z_1 = \infty\}, \quad
\{z_2 = \infty\}.
\]
We let 
\[
M_{0,5} \inj \overline M_{0,5}
\]
denote the Deligne-Mumford compactification. The map $\ka$ extends to a map $\overline \ka$ 
\[
\xymatrix{
& \overline M_{0,5} \ar[d]^{\overline \ka}  \\
M_{0,5} \ar[ur] \ar[r]_-\ka & \PP^1 \times \PP^1
}
\]
which identifies $ \overline M_{0,5}$ with the blowup of $\PP^1 \times \PP^1$ at the three points $(1,1)$, $(0,\infty)$, $(\infty,0)$. In particular, $\overline M_{0,5}$ has 3 exceptional divisors in addition to the 7 boundary divisors listed above. These are all isomorphic to $\PP^1$ over $\Spec \ZZ$ and have strict normal crossings so that the formal neighborhood of each intersection is isomorphic to $\Spec \ZZ\llbracket t,u \rrbracket$ with divisors given by $t=0$ and $u=0$. In particular, there are 4 $\ZZ$-integral tangential base-points associated to each point of intersection ($(\partial_t,\partial_u) = (\pm 1, \pm 1)$).

\subsection{}\label{cca3}
 The 1-forms 
\[
\xi_1 = \frac{dz_1}{z_1}, \;
\xi_{11} = \frac{dz_1}{1-z_1}, \;
\xi_2 = \frac{dz_2}{z_2}, \;
\xi_{22} = \frac{dz_2}{1-z_2}, \;
\xi_{12} = \frac{d(z_1z_2)}{1-z_1z_2}
\]
form a basis of $H^1_\dR(Y_\QQ)$. The construction of Deligne \cite[\S12]{Deligne89} provides a canonical splitting of the natural surjection 
\[
\pi_1^\dR (Y, y) \surj H_1^\dR(Y_\QQ)
\]
for any base-point $y$. Together, the basis and the splitting give rise to a canonical surjection
\[
\tag{*}
\pi(\eo, \et, \eoo, \ett, \eot) \surj \pi_1^\dR(Y, y)
\]
from the free prounipotent group on the set of generators $\Gamma$. 

\subsection{}\label{cca2}
In the de Rham setting, as in the $l$-adic setting, tangential fiber functors and local monodromy morphisms may be obtained directly from the 1-dimensional construction. In \cite[\S15.28-36]{Deligne89} Deligne constructs a functor from the category of vector bundles with integrable connection on $\hat D_\QQ^*:=\Spec \QQ (( t ))$ with regular singularity at $t=0$ to the category of vector bundles with integrable connection on $\GG_{m,\QQ}$. Let us denote this functor by $\delta$. 

Let $\VICinfty$ denote the category of unipotent vector bundles with integrable connection. Recall that a unipotent vector bundle with integrable connection on the complement of a simple normal crossings divisor inside a smooth scheme automatically has regular singularities along the divisor. The same holds for the divisor $t=0$ inside the formally smooth $\QQ$-scheme $\hat D_\QQ = \Spec \QQ \llbracket t \rrbracket$.

We let
\[
\om_{1}: \VICinfty(\GG_{m,\QQ}) \to \Vect(\QQ)
\]
denote the fiber functor 
\[
(E, \nabla) \mapsto E(1)
\]
associated to the point $1 \in \Gm$. If $y$ is a tangential base point of $M_{0,5}$ supported at $\overline y \in \overline M_{0,5}$, we let 
\[
h_y: \hat D_\QQ
\to \overline M_{0,5}
\]
be a map sending the closed point to $\overline y$ and whose derivative sends $1$ to $y$. We let $h_y^o$ denote the induced map
\[
\hat D_\QQ^* \to M_{0,5} = Y.
\]
In terms of the maps and functors defined above, we define 
\[
\om_{y} : \VICinfty(Y) \to \Vect(\QQ)
\]
to be the composite
\[
\VICinfty(Y) \xto{(h_y^o)^*} 
\VICinfty(\hat D^*_\QQ) \xto{\de}
\VICinfty(\GG_{m,\QQ}) \xto{\om_1}
\Vect(\QQ).
\]
Then $\pi_1^\dR(Y,y) = \Aut^\otimes(\om_y)$ is the de Rham realization of $\pi_1^\un(Y,y)$. 

If $y$ is a tangential base-point supported at a point contained in the divisor associated to the generator $e \in \{e_1, e_2, e_{11}, e_{22}, e_{12}\}$ then the associated local monodromy morphism in mixed Tate motives realizes to the composite 
\[
\Ga = \pi(e) \subset \pi(\eo, \et, \eoo, \ett, \eot) \surj \pi_1^\dR(Y, y)
\]
In terms of the presentation \ref{cca3}(*), $\pi^\PL$ is the prounipotent group associated to the ``abstract polylogarithmic Lie algebra'' considered in \S \ref{ap0} above.

\subsection{}\label{27n5}

By construction, the kernel $K$ of the projection 
\[
\phi:
\pi^\PL \surj \VV^\lor \QQ(1)^2
\]
is commutative. The local monodromy morphisms associated to the divisors $D_1$, $D_2$ induce a splitting $\mu = \mu_1 \oplus \mu_2$ of $\phi$. Our next goal is to construct a morphism
\[
\mu': \VV^\lor \QQ(1)^3 \to K
\]
associated to monodromy about the divisors $D_{11}$, $D_{22}$, $D_{12}$.

\subsection{} \label{ccc2}
We will construct a map 
\[
\mu_{12}: \VV^\lor \QQ(1) \to K \subset \pi^\PL(Y)
\]
corresponding to monodromy about the divisor $D_{12}$. The same construction, mutatis mutandis, provides similar maps $\mu_{11}$, $\mu_{22}$ corresponding to monodromy about the divisors $D_{11}$, $D_{22}$, respectively. The map $\mu'$ is then the direct sum
\[
\mu' = \mu_{11} \oplus \mu_{22} \oplus \mu_{12}.
\]
Let $y$ be a $\ZZ$-integral tangential base-point supported along the intersection of $D_{12}$ with the exceptional divisor $E$ over the point $(1,1) \in Y$ (recall from segment \ref{ccb1} above that there are precisely 4 such). Since $y$ is nowhere tangent to $E$, its image $w$ in the relative tangent bundle to $\AA^2 \setminus (D_1 \cup D_2)$ along the $\ZZ$-point $(1,1)$ is again a $\ZZ$-integral tangential base-point. The composite of the associated local monodromy map
\[
\tag{*}
\VV^\lor\QQ(1) \to \pi^\PL(Y,y)
\]
with the map
\[
\pi^\PL(Y,y) \to
 \pi_1^\un(\AA^2 \setminus(D_1 \cup D_2), y) 
 = \VV^\lor \QQ(1)^2
\]
is zero, so the local monodromy map factors through a map
\[
\tag{**}
\VV^\lor \QQ(1) \to K(y).
\]
The action of $\pi^\PL(Y)$ on $K$ factors through $\pi_1^\un(\AA^2 \setminus (D_1 \cup D_2), (1,1))$. Thus, $K(y)$ is equal to $K$ twisted by the torsor
\[
\tag{$\tau$}
\pi_1^\un(\Gm\times \Gm, (1,1)_0, y).
\]
Since there are no nontrivial Kummer motives over $\Spec \ZZ$, this torsor is trivial. Hence there's a canonical isomorphism of commutative unipotent group objects
\[
K(y)=K.
\]
Composing with (**) we obtain the map $\mu_{12}$. 

\subsection{}\label{ccd1}
The Lie bracket is a morphism of pro-\textit{mixed Tate motives} 
\[
[\cdot,\cdot]:\nN^\PL(Y) \otimes \nN^\PL(Y) \to \nN^\PL(Y).
\]
Let $\nu_{1}$, $\nu_{11}$, etc. be the maps of Lie algebras associated to the morphisms $\mu_?$ constructed above. For $n \ge 1$ we let 
\[
\nu_{11,n} := (\ad \nu_{1})^{n-1}(\nu_{11}), 
\]
that is, $\nu_{11,n}$ is the map
\[
\QQ(n) 
\xto{\nu_1 \otimes \cdots \otimes \nu_1 \otimes \nu_{11}}
\nN^\PL(Y)^{\otimes n}
\xto{[ \cdot, \cdots [\cdot ,[\cdot,\cdot]] \cdots ]}
\nN^\PL(Y).
\]
Similarly, we let 
\[
\nu_{22,n} := (\ad \nu_{2})^{n-1}(\nu_{22}),
\quad
\nu_{12,n} := (\ad \nu_{1})^{n-1}(\nu_{12}).  
\]
We could also define 
\[
\nu'_{12,n} := (\ad \nu_{2})^{n-1}(\nu_{12}).
\]
That $\nu_{12,n} = \nu'_{12,n}$ may be checked after passage to de Rham realization, where it's \ref{echo1} above. 
Together, the maps $\nu_?$ define a morphism of pro-\textit{mixed Tate motives}
\[
\tag{*}
\nN^\PL(Y) \xfrom{\nu}
\big( \QQ(1) \big)^2 \ltimes 
\left( \prod_{i=1}^\infty \QQ(i) \right)^3.
\]
We may check that $\nu$ is an isomorphism of Lie algebra objects after passage to de Rham realization where it follows from \ref{cca3}(*) and \ref{liebasis}, in view of the known computation of the de Rham fundamental group of $M_{0,5}$. 

This last computation may be extracted from the literature for instance as follows. \"Unver \cite[\S5]{UnverIhara} constructs generators $E_{i,j}$ ($0 \le i, j \le 4$) for the Lie algebra of the de Rham fundamental group, and proves that the latter is free pronilpotent on these generators modulo the relations 
\begin{align*}
 E_{ii} &= 0, \\
 E_{ji} &= - E_{ij}, \\
 \Si_{i} E_{ij} &= 0, \\
 [E_{ij}, E_{kl}] &=0 
 \mbox{ whenever } 
 \{i,j\} \cap \{k,l\} = \emptyset.
\end{align*}
(We have capitalized \"Unver's ``$E_{ij}$'' in order to avoid a conflict with our notation.) The generators are determined by their action on the universal prounipotent connection (the ``KZ'' connection). A presentation of the latter which makes the action evident is given by Oi-Ueno in \S2.1 of \cite{OiUeno} (where the generators $E_{ij}$ of \"Unver are denoted by $\Om_{ij}$). In terms of these generators, ours are given by
\begin{align*}
e_1 &= E_{12}+E_{13}+E_{14},
\\
e_2 &= E_{23},
\\
e_{11} &= -E_{14},
\\
e_{22} &= -E_{12},
\\
e_{12} &= -E_{24},
\end{align*}
as may be seen, for instance, by computing their action on the KZ-connection. The implied relations \ref{1.1}(R) are listed in \cite[\S4.1]{OiUeno}. This completes the proof of Proposition \ref{polquoprop}.

\begin{sremark}
Recall that $Y$ denotes the moduli space $M_{0,5}$ in its guise as
\[
\Spec \ZZ[z_1,z_2, z_1\inv, (1-z_1)\inv, z_2\inv, (1-z_2)\inv, (1-z_1z_2)\inv].
\]
Let $X = M_{0,4} = \Spec \ZZ[x, x\inv, (1-x)\inv]$. The map
\[
\iota:Y \to X^3
\]
\[
(z_1,z_2) \mapsto (z_1, z_2, z_1 z_2)
\]
is a closed immersion with image the closed subscheme defined by the equation
\[
x_3 = x_1 x_2.
\]
The de Rham first cohomology vector space $H^1_\dR(X^3_\QQ) = H^1_\dR(X)^{\oplus 3}$ has basis the six 1-forms 
\[
\tag{*}
\frac{dx_i}{x_i}, \quad \frac{dx_i}{1-x_i} \quad (i = 1,2,3). 
\]
Their pullbacks along $\iota$ span $H^1_\dR(Y_\QQ)$ and are linearly independent modulo the one relation
\[
\tag{**}
\iota^* \frac{dx_3}{x_3}
= \iota^* \frac{dx_1}{x_1} + \iota^*\frac{dx_2}{x_2}.
\]
If we label the six generators of the de Rham unipotent fundamental group $\pi_1^\dR(X^3)$ of $X^3$ at the tangential base-point $(\vec{1_0},\vec{1_0},\vec{1_0})$ associated to the 1-forms (*) as follows:
\[
d_0^1, d_1^1, d_0^2, d_1^2, d_0^3, d_1^3,
\]
then the map of de Rham unipotent fundamental groups
\[
\pi_1^\dR(Y) \to \pi_1^\dR(X^3)
\]
sends
\begin{align*}
e_1 &\mapsto d^1_0 + d^3_0 \\
e_{11} &\mapsto d^1_1 \\
e_2 &\mapsto d^2_0 + d^3_0 \\
e_{22} &\mapsto d^2_1 \\
e_{12} &\mapsto d^3_1. \\
\end{align*}
In terms of the associated map of Hopf algebras (with dual elements in the Hopf algebra denoted by $f_?$ as usual) equation (*) reads 
\[
\iota^* f_{d^3_0}  = 
\iota^*f_{d^1_0} +\iota^*f_{d^2_0}.
\]
This gives geometric meaning to the equation ``$f_{e_3} = f_{e_1} +f_{e_2}$''.

Let $\pi_1^\PL(X^3) = \pi_1^\PL(X)^3$ denote the quotient of $\pi_1^\dR(X^3)$ associated to the polylogarithmic quotient of $\pi_1^\dR(X)$ (or a quotient thereof by some step of the descending central series). Let $Z = \Spec \ZZ[1/6]$ as usual, and let $K(Z)$ denote the fraction field of the prounipotent mixed Tate Galois group $\pi_1^\un(Z)$. Let $\Aa(X^3) = \Aa(X)^{\otimes 3}$ denote the coordinate ring of $\pi^\PL(X^3)_{K(Z)}$. Let $\Ss(X)$ denote the coordinate ring of the base-change to $K(Z)$ of the Selmer scheme
\[
{\bf H}^1 \big( \pi_1^{\MT}(Z), \pi^\PL(X) \big)
= {\bf Z}^1\big(
\pi_1^\un(Z), \pi^\PL(X) 
\big)^{\Gm}
\]
and similarly for $X^3$; we have
\[
\Ss(X^3) = \Ss(X)^{\otimes 3}.
\]
Then the universal cocycle evaluation maps of $Y$ and of $X^3$, together with the maps $\iota_\Aa$, $\iota_\Ss$ induced by the embedding $\iota$, form a commuting square of $K(Z)$-algebras
\[
\xymatrix{
\Ss(Y) & \ar[l]_-{\theta_Y} \Aa(Y)\\
\Ss(X^3) \ar[u]^-{\iota_\Ss} & \ar[l]^-{\theta_{X^3}} \Aa(X^3)  \ar[u]_-{\iota_\Aa}.
}
\]

Let $d_0$, $d_1$ denote the standard generators of the de Rham unipotent fundamental group of $X$. In Proposition \S\ref{6s2} we essentially constructed a certain polynomial $p(y) \in \Aa(X)[y]$ such that (after translating along the universal cocycle evaluation map $\theta_X$)
\[
\tag{R}
p(\Phi^\tau_{d_0}) = 0
\]
where $\Phi^\tau_{d_0}$ denotes the function on cocycles 
\[
\Phi^\tau_{d_0}(c) = \langle \tau, c^\sharp d_0 \rangle.
\]
Applied to the three copies of $X$, this gives us three polynomials $p_1, p_2, p_3$ such that 
\[
p_1(\Phi^\tau_{d^1_0}) = 0,
\quad
p_2(\Phi^\tau_{d^2_0}) = 0,
\quad
\mbox{and}
\quad
p_1(\Phi^\tau_{d^3_0}) = 0.
\]
The images of the three roots in $\Ss(Y)$ obey the algebraic relation
\[
\Phi^\tau_{e_3} = \Phi^\tau_{e_1} + \Phi^\tau_{e_2}
\]
which is again an immediate consequence of $(**)$. This puts the double-resultant construction \ref{6s1}(5) on a geometric footing. (Our construction of the relation (R) obeyed by $\Phi^\tau_{d_0}$ over $\Aa(X)$ remains ad hoc.)

\end{sremark}

\section{The $p$-adic unipotent Albanese map}

Throughout this section, Tannakian fundamental groups are endowed with the usual \emph{functorial} product.

\subsection{}
\label{wacabuc1} \label{zcoords}
Let
\[
Y = \Spec \ZZ[z_1, z_2, z_1\inv, z_2\inv, (1-z_1)\inv, (1-z_2)\inv, (1-z_1 z_2)\inv],
\]
let $p$ be a prime, let $\pi^{\PL, \dR}(Y_\Qp)$ denote the polylogarithmic quotient of the de Rham unipotent fundamental group of $Y$ at the tangential base-point ``$(1,1)$ at $(0,0)$'' with respect to functorial composition of paths (\S\ref{polquosection}). Let 
\[
S = \{e_1, e_{11}, e_2, e_{22}, e_{12}\}.
\]
In \S\ref{cca3} we outlined the construction of the standard presentation 
\[
\pi(S)_{\Qp} 
\surj
\pi^{\PL, \dR}(Y_\Qp).
\]
We define
\[
\Lambda_{\ge -\infty}^\m{fun} = \bigcup_{i=1}^\infty \Lambda^\m{fun}_{-i},
\mbox{ where }
\Lambda_{-1}^\m{fun} = S,
\]
\[
\mbox{ and }
\Lambda_{-i}^\m{fun} = \{ e_1^{i-1}e_{11}, e_2^{i-1}e_{22}, (e_1+e_2)^{i-1}e_{12}\}
\mbox{ for } i \ge 2.
\]
If $\om$ is a finite linear combination of words in the alphabet $S$, we let $f_\om$ denote the linear functional on the completed universal enveloping algebra $\Uu(S)$ dual to $\om$ with respect to the standard (topological) basis. According to Lemma \ref{ab3} (applied to the opposite group), the functions $f_\la$ (for $\la \in \Lambda^\m{fun}_{\ge - \infty}$) on $\pi(S)_\Qp$ factor through $\pi^{\PL, \dR}(Y_\Qp)$ and form an algebra basis for its coordinate ring
\[
A^{\PL, \dR}(Y_\Qp) := \Oo(\pi^{\PL, \dR}(Y_\Qp)).
\]
Let $f_\la^\m{BC}$ denote the Besser-Coleman function on $Y(\Zp)$ obtained by composing $f_\la$ with the unipotent Albanese map
\[
\al: Y(\Zp) \to 
\pi^{\PL, \dR}(Y_\Qp). 
\]
Let $e_3 := e_1+e_2$.

\begin{sprop}
\label{albcoords}
In the situation and the notation of \S\ref{zcoords}, we have (for $i \ge 1$)
\begin{align*}
f_{e_1}^\m{BC}(z_1, z_2) &= \log(z_1) \\
f_{e_2}^\m{BC}(z_1, z_2) &= \log(z_2) \\
f_{e_1^{i-1}e_{11}}^\m{BC}(z_1, z_2) 
	&= \Li_i(z_1) \\
f_{e_2^{i-1}e_{22}}^\m{BC}(z_1, z_2) 
	&= \Li_i(z_2) \\
f_{e_3^{i-1}e_{12}}^\m{BC}(z_1, z_2) 
	&= \Li_i(z_1 z_2). \\
\end{align*}
\end{sprop}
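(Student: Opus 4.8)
The plan is to derive all five formulas from the classical one-dimensional computation for $X = \thrpl$ by functoriality of the $p$-adic unipotent Albanese map, applied to three morphisms $Y \to X$. Recall that for a morphism $f\colon Y \to X$ of the type discussed in \S\ref{polquosection} which carries the chosen (tangential) base-point of $Y$ to the chosen one of $X$, there is a commuting square relating the Albanese maps $\al_Y$, $\al_X$ and the induced morphism $\pi^{\PL,\dR}(Y_\Qp) \to \pi^{\PL,\dR}(X_\Qp)$; consequently $(f^\sharp g)^{\m{BC}} = g^{\m{BC}} \circ f$ on $\Zp$-points for every function $g$ on the polylogarithmic quotient of $X$. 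On the one-dimensional side, the Besser--Coleman realization of the standard polylogarithmic coordinates is classical (going back to work of Furusho, Besser, and others): writing $d_0, d_1$ for the standard generators of $\pi^{\PL,\dR}(X_\Qp)$, one has $f_{d_0}^{\m{BC}} = \log$ and $f_{d_0^{i-1}d_1}^{\m{BC}} = \Li_i$ on $X(\Zp)$, in the normalization used in this paper (and modulo the usual translation between the lexicographic and functorial conventions).

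I would run this through the maps $z_1, z_2, z_1 z_2 \colon Y \to X$, or equivalently through the closed immersion $\iota\colon Y \hookrightarrow X^3$ of the Remark at the end of \S\ref{polquosection} followed by the three projections. From the description of $\iota$ on de Rham fundamental groups given there ($e_1 \mapsto d_0^1 + d_0^3$, $e_{11}\mapsto d_1^1$, $e_2 \mapsto d_0^2 + d_0^3$, $e_{22}\mapsto d_1^2$, $e_{12}\mapsto d_1^3$) one reads off that the Lie-algebra map induced by $z_1$ sends $e_1 \mapsto d_0$, $e_{11}\mapsto d_1$ and $e_2 = e_{22} = e_{12}\mapsto 0$; hence on coordinate rings $z_1^\sharp f_{d_0} = f_{e_1}$ and, since pulling back a function dual to a word amounts to substituting generators, $z_1^\sharp f_{d_0^{i-1}d_1} = f_{e_1^{i-1}e_{11}}$. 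By Lemma \ref{ab3} (together with its analogue for $X$) these are genuine functions on the polylogarithmic quotients, and the polylogarithmic quotient is functorial along $z_1$ because $z_1$ sits in the obvious commuting square with the first projection $\Gm \times \Gm \to \Gm$, so this identity descends. The case of $z_2$ is symmetric. For $z_1 z_2$ the induced Lie-algebra map sends $e_1, e_2 \mapsto d_0$, $e_{12}\mapsto d_1$ and $e_{11} = e_{22}\mapsto 0$, whence $(z_1 z_2)^\sharp f_{d_0} = f_{e_1} + f_{e_2} = f_{e_3}$ and $(z_1 z_2)^\sharp f_{d_0^{i-1}d_1} = \sum_w f_{w\, e_{12}} = f_{e_3^{i-1}e_{12}}$, the sum over words $w$ of length $i-1$ in $e_1, e_2$ (recall $e_3 = e_1 + e_2$); again this descends, this time via the commuting square with the multiplication map $\Gm \times \Gm \to \Gm$. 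Plugging these three computations into the functoriality square and invoking the one-dimensional formulas yields exactly the asserted identities --- \emph{provided} the base-points match up.

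The base-point compatibility is the only real obstacle. Under $z_1$ and $z_2$ the tangential base-point ``$(1,1)$ at $(0,0)$'' of $Y$ maps to the standard tangential base-point of $X$, so the functoriality square applies directly and the formulas for $f_{e_1}, f_{e_2}, f_{e_1^{i-1}e_{11}}, f_{e_2^{i-1}e_{22}}$ fall out at once (and as a sanity check $f_{e_1}, f_{e_2}$ already factor through $\pi^{\PL,\dR}(Y) \to \pi_1^\dR(\Gm \times \Gm)$, whose Albanese map is $(z_1, z_2) \mapsto (\log z_1, \log z_2)$). Under $z_1 z_2$, however, the derivative of $z_1 z_2$ vanishes at $(0,0)$, so the image of the base-point is a \emph{degenerate} tangential base-point --- the ``parabolic'' germ $u = \epsilon^2 \to 0$ --- and functoriality cannot simply be quoted. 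I would dispose of this in one of two ways. One option is to compute the de Rham fibre functor attached to this parabolic germ directly and identify it, through the canonical de Rham paths, with the fibre functor of the standard tangential base-point: in the de Rham setting all tangential base-points at $0$ are canonically identified, and the residual rescaling is invisible to functions whose value at $0$ is $0$, which is the case for $\log$ and for every $\Li_i$. The cleaner alternative is to avoid functoriality for this one case and instead characterize $f_{e_3^{i-1}e_{12}}^{\m{BC}}$ as the unique Coleman function on $Y(\Zp)$ with differential $f_{e_3^{i-2}e_{12}}^{\m{BC}}\cdot\big(\tfrac{dz_1}{z_1}+\tfrac{dz_2}{z_2}\big)$ (and $\tfrac{d(z_1z_2)}{1-z_1z_2}$ in the case $i=1$) and vanishing regularized value along $D_{12}$, observe that $\Li_i(z_1 z_2)$ satisfies the same differential equation and the same boundary condition, and conclude by uniqueness of Coleman iterated integration. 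In either approach the substantive ingredients are just the one-dimensional Besser--Coleman formulas and the structure of $A^{\PL,\dR}$ furnished by Lemma \ref{ab3}; everything else is formal manipulation.
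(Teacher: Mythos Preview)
Your approach is essentially the paper's own. For the four families involving $e_1,e_2,e_{11},e_{22}$ the paper uses exactly the projections $z_1,z_2\colon Y\to X$ and functoriality of the Albanese map; for $f_{e_3^{i-1}e_{12}}$ the paper skips the multiplication map entirely and goes straight to your ``cleaner alternative'': it writes down the universal unipotent connection on $Y$, extracts the differential equations
\[
df^{\m{BC}}_{e_{12}} = \frac{d(z_1z_2)}{1-z_1z_2},
\qquad
df^{\m{BC}}_{e_3^{i}e_{12}} = f^{\m{BC}}_{e_3^{i-1}e_{12}}\,\frac{d(z_1z_2)}{z_1z_2},
\]
and matches them against $\Li_i(z_1z_2)$ by induction. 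So your diagnosis of the base-point obstruction for $z_1z_2$ is on target, and your Option~B is precisely what the paper does.

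One correction: the boundary condition you state is wrong. The regularized value is taken at the tangential base-point $(1,1)_{(0,0)}$ (the origin), not along $D_{12}=\{z_1z_2=1\}$; indeed $\Li_i$ does not vanish at $1$. The paper phrases this as ``constant term $0$ at the tangential base-point $(1,1)_{(0,0)}$'', citing Besser--Furusho for the formalism. Your Option~A (identifying the parabolic germ with the standard tangential fiber functor) is plausible in spirit but would need real work to make precise; the paper does not attempt it.
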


\begin{proof}
Let $X = \Spec \ZZ[x,x\inv, (1-x)\inv]$ and consider the maps $p,q:Y \to X$ given by
\begin{align*}
(z_1, z_2) \overset{p}\mapsto z_1 &&
(z_1, z_2) \overset{q}\mapsto z_2.
\end{align*}
These maps induce maps of punctured tangent spaces at $(z_1,z_2) = (0,0)$ and send the tangential base-point ``$(1,1)$ at $(0,0)$'' to the tangential base-point ``$1$ at $0$''. The unipotent de Rham fundamental group $\pi_1^\dR(X_\QQ, \vec{1_0})$ is freely generated by two elements $d_0$ (monodromy about $x = 0$) and $d_1$ (monodromy about $x=1$). The maps induced by $p,q$ on $\pi_1$ send
\begin{align*}
e_1 \overset p \mapsto d_0 && e_1 \overset q \mapsto 0  \\
e_{11} \mapsto d_1 && e_{11} \mapsto 0  \\
e_2 \mapsto 0 && e_2 \mapsto d_0  \\
e_{22} \mapsto 0 && e_{22} \mapsto d_1  \\
e_{12} \mapsto 0 && e_{12} \mapsto  0.  \\
\end{align*}
We now focus on the map $p$ and the functions $f^\m{BC}_{e^{i-1}_1 e_{11}}$. By the formula given above for the induced map on fundamental groups, $f_{d_0^{i-1}d_1}$ pulls back along $p$ to $f_{e^{i-1}_1 e_{11}}$. On the other hand $f_{d_0^{i-1}d_1}$ pulls back along the $p$-adic unipotent Albanese to $\Li_i(x)$. The $p$-adic unipotent Albanese maps fit into a commuting square
\[
\xymatrix{
Y(\Zp) \ar[d]_p \ar[r] & \pi_1^\dR(Y_\Qp, (1,1)_{(0,0)}) \ar[d]
\\
X(\Zp) \ar[r] & \pi_1^\dR(X_\Qp, 1_0).
}
\]
Combining these facts we find that 
\[
f_{e_1^{i-1}e_{11}}^\m{BC}(z_1, z_2) 
= \Li_i(z_1),
\]
and similarly for $f_{e_2^{i-1}e_{22}}^\m{BC}$.

We turn to the function $f_{e_3^{i-1}e_{12}}^\m{BC}$. Let
\[
\Uu = \QQ \langle \langle S \rangle \rangle / I
\]
where $I$ is the two-sided ideal generated by the Lie relations \ref{1.1}(R). Let $\Ee = \Uu \otimes \Oo_Y$ with connection 
\[
\nabla: \Ee \to \Ee \otimes \Om^1_Y
\]
given on a word $W$ in the alphabet $S$ regarded as a section of the trivial pro-\textit{vector bundle} $\Ee$ by (notation as in \S\ref{cca3})
\[
\nabla(W) = -e_1 W \xi_1 - e_{11} W \xi_{11} -
e_{2} W \xi_{2} - e_{22} W \xi_{22} - e_{12} W \xi_{12}.
\]
Then $(\Ee, \nabla)$ is isomorphic to the universal unipotent connection on $Y$ (at any base-point), equipped with its de Rham trivialization. Hence, $f_{e_3^{i-1}e_{12}}^\m{BC}$ may be represented by the abstract Coleman function given by the connection $(\Ee, \nabla)$, the projection $f_{e_3^{i-1}e_{12}}: \Ee \to \Oo$, and the Frobenius-compatible family of horizontal sections on residue polydisks with \emph{constant term} $0$ at the tangential base-point $(1,1)_{(0,0)}$ \cite{BesserFurusho}; this is the same, \textit{mutatis mutandis}, as the case of $\thrpl$ treated, for instance, in Theorem 2.3 of Furusho \cite{furushoii}. It follows that the functions $f^{\m{BC}}_W$ obey 
\[
d \sum_W f^\m{BC}_W W
= \sum_V f^\m{BC}_V (e_1 W \xi_1 + e_{11} W \xi_{11} +
e_{2} W \xi_{2} + e_{22} W \xi_{22} + e_{12} W \xi_{12}).
\]
Hence $f^\m{BC}_{e_{12}}$ satisfies the differential equation 
\[
df^\m{BC}_{e_{12}} = \frac{d(z_1 z_2)}{1-z_1z_2}.
\]
Since $\Li_1(z_1 z_2)$ satisfies the same differential equation and has constant term $0$ at the base-point $(1,1)_{(0,0)}$, it follows that 
\[
f^\m{BC}_{e_{12}} = \Li_1(z_1 z_2).
\]
Similarly, for $i \ge 1$, $f_{e_3^{i}e_{12}}^\m{BC}$ satisfies the differential equation 
\[
d f_{e_3^{i}e_{12}}^\m{BC} 
= f_{e_3^{i-1}e_{12}}^\m{BC} \frac{d(z_1z_2)}{z_1 z_2},
\]
and (by induction), $\Li_{i+1}(z_1z_2)$ satisfies the same differential equation. Since $\Li_{i+1}(z_1z_2)$ too has constant term $0$, it follows that 
\[
f_{e_3^{i-1}e_{12}}^\m{BC}(z_1, z_2) 
	= \Li_i(z_1 z_2)
\]
as claimed. 
\end{proof}

\section{Summary and construction of $F^\m{BC}$} 
\label{wacasection}

\subsection{}\label{waca2}
Fix a prime $p$ not dividing $6$. In \S\ref{6s1} we constructed a polynomial $F$ in the 14 variables
\[
\tag{A$_\m{lex}$}
f_{e_1}, f_{e_{11}},\dots
\]
listed in \S\ref{gs1} whose coefficients are rational functions in the 11 symbols
\[
\tag{G$_\m{lex}$}
f_\tau, f_\upsilon, \dots
\]
(also listed in \S\ref{gs1}) over the rationals. Using the equations obtained in \S\ref{deceq} and replacing motivic polylogarithms by their $p$-adic periods, we obtain a polynomial with coefficients in $\Qp$. In terms of the coordinates $z_1, z_2$ on $Y = M_{0,5}$ (\S\ref{zcoords}), we replace the indeterminates by Besser-Coleman functions on $Y(\Zp)$ as follows:
\begin{align*}
f_{e_1} & \mapsto \log(z_1) \\
f_{e_2} & \mapsto \log(z_2) \\
f_{e_{11}e_1^{i-1}} & \mapsto \Li_i(z_1) \\
f_{e_{22}e_2^{i-1}} & \mapsto \Li_i(z_2) \\
f_{e_{12}e_3^{i-1}} & \mapsto \Li_i(z_1 z_2). \\
\end{align*}
This gives us a Besser-Coleman function $F^\m{BC}$ on $M_{0,5}(\Zp)$. Our goal for this section is to explain how the results obtained above show that $F^\m{BC}$ is within $\ep$ of a Kim function for $M_{0,5}$ in half-weight $4$ over $Z = \Spec \ZZ[1/6]$ (Theorem \ref{fbctheorem}) while clarifying $\ep$ and indicating how to apply our lexicographic computations to functorial fundamental groups.

\subsection{}\label{waca3}
Let $\pi_1^{\MT}(Z) = \pi_1^\un(Z) \rtimes \Gm$ denote the \emph{functorial}  fundamental group of the category of mixed Tate motives over $Z$ at the de Rham fiber functor and let $A(Z) = \Oo(\pi_1^\un(Z))$ be the associated graded Hopf algebra. Let $X = \thrpl$, let $\pi_1^\un(X, 1_0)$ denote the \emph{functorial} unipotent fundamental group of $X$ at the standard $\ZZ$-integral base point $1_0$. Let $\dR^*\pi_1^\un(X, 1_0)$ denote its de Rham realization. Let $d_0$, $d_1$ denote the standard generators on the latter. In view of the canonical trivializations of de Rham path torsors, a word $\om$ in $d_0, d_1$ gives rise to a function $f_\om$ on any path torsor. Recall that given $a \in X(Z)$ and $n \ge 1$ we define the \emph{(functorial, unipotent) motivic polylogarithm} $\Liu_n(a) \in A_n(Z)$ to be the function
\[
\pi_1^\un(Z) \xto{o(p^\dR)} \pi_1^\un(X, 1_0, a) 
\xto{f_{d_1d_0^{n-1}}}
\AA^1_\QQ.
\]
as in (\S\ref{lidef})$^\m{op}$.

\subsection{}\label{rome1}
Let $A(Z)_{[\le 4]} \subset A(Z)$ denote the subalgebra generated in half-weights $\le 4$ and by $\pi_1^\un(Z)_{\ge -4}$ the associated quotient of $\pi_1^\un(Z)$. We let $K(Z)_{[\le 4]}$ denote the fraction field of $A(Z)_{[\le 4]}$ and let
\[
\eta(Z)_{\ge -4} = \Spec K(Z)_{[\le 4]}.
\]
Let $K'(Z)_{[\le 4]} \subset K(Z)_{[\le 4]}$ denote the maximal localization of $A(Z)_{[\le 4]}$ to which the $p$-adic period map $\m{per}: A(Z) \to \Qp$ extends (conjecturally $K' = K$) and let
\[
\eta'(Z)_{\ge -4} = \Spec K'(Z)_{[\le 4]}.
\]
We denote the map of schemes
\[
\Spec \Qp \to \eta'(Z)_{\ge -4}
\]
induced by the period map by $I_{BC}$.

\subsection{}\label{waca4}
The nonabelian cohomology variety 
\[
\mathbf{H}^1(\pi_1^\un(Z, \om), \pi^\m{PL}_{\ge -4}(Y)_\om)
\]
is independent of the choice of fiber functor $\om$, which we therefore omit from the notation, and similarly for its filtered $\phi$ variant. Let $Z_p = \Spec \Zp$ and let
\[
\pi_1^\m{MT}(Z_p) = \pi_1^\m{un}(Z_p) \rtimes \Gm
\]
denote the fundamental group of the category of mixed Tate filtered $\phi$-modules over $\Qp$ \cite{mtmue} at the de Rham fiber functor. We denote the realization of a mixed Tate motive $M$ in mixed Tate filtered $\phi$-modules by $\opnm{F\phi^*}(M)$, and we denote de Rham realization by $\m{dR}^*(M)$. In the diagrams below, we let $\bf{RL}$ denote the map of nonabelian cohomology varieties obtained by realization and localization. We let $\ka, \ka_p$ denote the unipotent motivic and filtered $\phi$ Kummer maps. We replace $\bf{RL}$ by $RL$ and $\bf{H}$ by $H$ to denote the induced map of $\Qp$-points. This completes the definition of the objects and morphisms in the first diagram:
\[
\tag{*}
\xymatrix{
Y(Z) \ar[r] \ar[d]_\ka & Y(Z_p) \ar[d]^{\ka_p}
\\
H^1 \big(\pi_1^\m{MT}(Z)_\Qp, \pi^\m{PL}_{\ge -4}(Y)_\Qp \big)
\ar[r]_-{RL}
& H^1\big(\pi_1^\m{MT}(Z_p), F\phi^*\pi^\m{PL}_{\ge -4}(Y) \big).
}
\]
For a fuller discussion of a direct analog of this diagram: its commutativity and its (close) relationship to Kim \cite{kimi, kimii}, we refer the reader for instance to \cite{mtmue}.

\subsection{}\label{rome2}
By Proposition \ref{polquoprop}, the unipotent radical of $\pi_1^\m{MT}(Z)$ acts trivially on $\pi^\m{PL}_{\ge -4}(Y)$. Consequently, Proposition 5.2.1 of \cite{mtmue} applies to show that the natural map
\[
r: {\bf H}^1\big(\pi_1^\m{MT}(Z), \pi^\m{PL}_{\ge -4}(Y) \big)
\to
{\bf Z}^1\big(\pi_1^\m{un}(Z), \pi^\m{PL}_{\ge -4}(Y) \big)^\Gm
\]
to the $\QQ$-scheme (or functor) parametrizing $\Gm$-equivariant 1-cocycles is an isomorphism. We also have the usual isomorphism
\[
c: 
{\bf H}^1\big( \pi_1^\m{MT}(Z_p), F\phi^*\pi^\m{PL}_{\ge -4}(Y) \big)
\to
 \m{dR}^*\pi^\m{PL}_{\ge -4}(Y)_\Qp
\]
\cite{kimii, mtmue}, which follows from the fact that every $\pi_1^\m{MT}(Z_p)$-equivariant $F\phi^*\pi^\m{PL}_{\ge -4}(Y)$-torsor possesses a unique Frobenius-fixed point and a unique point in filtered degree 0. 

Since every $\Gm$-equivariant cocycle
\[
c: \pi_1^\un(Z)
\to 
\pi^\PL_{\ge -4}(Y)
\]
factors (uniquely) through $\pi_1^\un(Z)_{\ge -4}$, we have a canonical isomorphism
\[
{\bf Z}^1\big(\pi_1^\m{un}(Z), \pi^\m{PL}_{\ge -4}(Y) \big)^\Gm
=
{\bf Z}^1\big(\pi_1^\m{un}(Z)_{\ge -4}, \pi^\m{PL}_{\ge -4}(Y) \big)^\Gm
\]
and hence a \emph{universal cocycle evaluation map}
\[
\ev: 
\pi_1^\m{un}(Z)_{\ge -4}
\times
{\bf Z}^1\big(
\pi_1^\m{un}(Z),
\pi^\m{PL}_{\ge -4}(Y)
\big)^\Gm
\to
\pi_1^\m{un}(Z)_{\ge -4}
\times
\pi^\m{PL}_{\ge -4}(Y)
\]
given on points by
\[
\ev(\gamma, c) = (\gamma, c(\gamma)).
\]
We may then base-change $\ev$ along the evident maps
\[
\xymatrix{
&
\pi_1^\un(Z)_{\ge -4}
\\
\eta(Z)_{\ge -4} \ar[r] & 
\eta'(Z)_{\ge -4} \ar[u] &
\Spec \Qp \ar[l]
}
\]
(\S\ref{rome1}). We denote the base-change to $\eta'(Z)_{\ge -4}$ by $\ev'$, we denote the base-change to $\Spec \Qp$ by $\ev_{I_{BC}}$, and we denote the base-change to $\eta(Z)_{\ge -4}$ simply by $\ev$. This completes our definitions of the objects and morphisms in the following diagram, whose commutativity is clear.
\[
\tag{**}
\xymatrix{
{\bf H}^1\big(\pi_1^\m{MT}(Z), \pi^\m{PL}_{\ge -4}(Y) \big)_{\Qp}
\ar[r]_{\bf{RL}} \ar[d]_-{r}
& {\bf H}^1\big( \pi_1^\m{MT}(Z_p), F\phi^*\pi^\m{PL}_{\ge -4}(Y) \big)
\ar[d]^{c}
\\
\Spec \Qp \times 
{\bf Z}^1\big(\pi_1^\m{un}(Z), \pi^\m{PL}_{\ge -4}(Y) \big)^\Gm
\ar[r]_-{\fk {ev}_{I_{BC}}} \ar[d]_{I_{BC}}
& 
\Spec \Qp \times
 \m{dR}^*\pi^\m{PL}_{\ge -4}(Y)
\ar[d]^-{I_{BC}}
\\
\eta'(Z)_{\ge -4} \times 
{\bf Z}^1\big(\pi_1^\m{un}(Z), \pi^\m{PL}_{\ge -4}(Y) \big)^\Gm
\ar[r]_-{\fk{ev}'}
& 
\eta'(Z)_{\ge -4} \times
 \m{dR}^*\pi^\m{PL}_{\ge -4}(Y)
 \\
\eta(Z)_{\ge -4} \times 
{\bf Z}^1\big(\pi_1^\m{un}(Z), \pi^\m{PL}_{\ge -4}(Y) \big)^\Gm
\ar[r]_-{\fk{ev}}
\ar[u]
& 
\eta(Z)_{\ge -4} \times
 \m{dR}^*\pi^\m{PL}_{\ge -4}(Y).
\ar[u]
}
\]

\subsection{}
Since Lyndon words provide an algebra-basis for the shuffle algebra, the \emph{arithmetic shuffle coordinates} (A$_\m{fun}$) obtained by reversing the order of letters in \ref{waca2}(A$_\m{lex}$), form an algebra basis of $A(Z)_{[\le 4]}$. The morphism \ref{cca3}(*) provides a presentation of $\pi_1^\un(Y)$ (at any base-point) with Lie-algebra relations given by \ref{1.1}(R) \cite{OiUeno}. According to Lemma \ref{ab3}, the \emph{geometric shuffle coordintes} (G$_\m{fun}$) obtained by reversing the order of letters in \ref{waca2}(G$_\m{lex}$), form an algebra basis of
\[
A^\m{PL}_{[\le 4]}(Y) = \Oo ( \dR^* \pi^\m{PL}_{\ge -4}(Y)).
\]
In this way, $F$ defines a function on
\[
\eta(Z)_{\ge -4} \times
 \m{dR}^*\pi^\m{PL}_{\ge -4}(Y).
\]

Propositions (\ref{coocoo4})$^\m{op}$ and (\ref{6s2})$^\m{op}$ show that $F$ vanishes on the image of the evaluation map $\ev$. The computations of (\S\ref{sobsection})$^\m{op}$ as summarized in (\S\ref{deceq})$^\m{op}$ allow us to replace the coefficients of $F$ by polynomials in motivic polylogarithms which are unramified over $Z$, at the cost of a possible $p$-adic error of size determined by the precision of the $p$-adic periods on which these computations depend. Numerical evaluation of the $p$-adic periods of the coefficients then shows that $F$ factors through 
\[
\eta'(Z)_{\ge -4} \times
 \m{dR}^*\pi^\m{PL}_{\ge -4}(Y)
\]
and it follows that $F$ vanishes on the image of $\ev'$. Pullback by $I_{BC}$ corresponds to replacing the coefficients in $F$ by their $p$-adic periods. Further, according to Proposition \ref{albcoords}, pullback by the unipotent Albanese map
\[
\al = c \circ \ka_p
\]
corresponds to the replacement of indeterminates by Besser-Coleman functions as listed at the end of \S\ref{waca2}. By the commutativity of \ref{rome2}(**), this shows that $F^\m{BC}$ (after possibly enduring a small modification) is a Kim function as claimed. This completes the proof of Theorem \ref{fbctheorem}.

\subsection{}
\label{addendum}
Some Kim functions on $M_{0,5}$ are uninteresting because they \emph{come from Kim functions on $M_{0,4}$}. Instead of making this notion precise (in one of several possible ways), we give a concrete example. As above, we identify $M_{0,4}$ with $X=\Spec \ZZ[x, x\inv, (1-x)\inv]$ and we identify $M_{0,5}$ with
\[
Y=\Spec \ZZ[z_1, z_2, z_1\inv, z_2\inv, (1-z_1)\inv, (1-z_2)\inv, (1-z_1z_2)\inv].
\]
The modular interpretations of these functions are determined by the formulas
\[
x(\PP^1, 0, 1, \infty, a)
= a,
\]
\[
\quad
z_1(\PP^1, 0, 1, \infty, c,d)
= \frac{c}{d},
\quad \mbox{and} \quad
z_2(\PP^1, 0, 1, \infty, c,d)
= d.
\]
In terms of our presentations, the map
\[
f: (\PP^1, 0, 1, \infty, c,d)
\mapsto
(\PP^1, 0, 1, \infty,d)
\]
corresponds to the second projection
\[
Y \to X
\]
which extends to 
\[
\Gm \times \Gm \to \Gm
\]
and respects our chosen tangential base-points. This means that it induces a $\Gm$-equivariant map of polylogarithmic quotients
\[
\pi^\PL(Y) \to \pi^\PL(X).
\]
Thus, for any $Z \subset \Spec \ZZ$, $n \in \NN$ and $p \in Z$, if $F$ is a Kim function on $X(\Zp)$ associated to $Z$ and to $\pi^\PL_{\ge -n}(X)$, then $f^\sharp F$ is a Kim function on $Y(\Zp)$ associated to $Z$ and to $\pi^\PL_{\ge -n}(Y)$.

But the function of Theorem \ref{fbctheorem} is not of this form. Indeed, a simple dimension count shows that (regardless of $p$) there are no Kim functions for $X$ over $Z = \Spec \ZZ[1/6]$ in half-weight $n=4$.

\bibliography{M05_Refs}
\bibliographystyle{alphanum}

\vfill

\Small

\noindent
\textsc{Ishai Dan-Cohen} 

\noindent 
\textsc{Department of mathematics}

\noindent
\textsc{Ben-Gurion University of the Negev}

\noindent
\textsc{Be'er Sheva, Israel}

 \noindent
\textsc{Email address:} \texttt{ishaidc@gmail.com}

\bigskip

\noindent
\textsc{David Jarossay} 

\noindent 
\textsc{Department of mathematics}

\noindent
\textsc{Ben-Gurion University of the Negev}

\noindent
\textsc{Be'er Sheva, Israel}

 \noindent
\textsc{Email address:} \texttt{jarossay@post.bgu.ac.il}

\end{document}